\newtheorem{theorem}{Theorem}
\newtheorem{corollary}{Corollary}
\newtheorem{definition}{Definition}
\newtheorem{lemma}{Lemma}
\newtheorem{proposition}{Proposition}
\newtheorem{remark}{Remark}
\numberwithin{equation}{section}
\title{Expressive Power of Deep Networks on Manifolds: Simultaneous Approximation$^\dag$\footnotetext{\dag~The work of Lei Shi is partially supported by the National Natural Science Foundation of China (Grant No.12171039). The corresponding author is Lei Shi.}}
\author[1]{Hanfei Zhou}
\author[1,2]{Lei Shi}
\affil[1]{School of Mathematical Sciences, \linebreak
Fudan University, Shanghai, 200433, China 
}
\affil[2]{
Shanghai Key Laboratory for Contemporary Applied Mathematics, \linebreak
Fudan University, Shanghai, 200433, China \linebreak
Email:zhouhf23@m.fudan.edu.cn, leishi@fudan.edu.cn
}
\date{}
\begin{document}
	\maketitle
 
\begin{abstract}
A key challenge in scientific machine learning is solving partial differential equations (PDEs) on complex domains, where the curved geometry complicates the approximation of functions and their derivatives required by differential operators. This paper establishes the first simultaneous approximation theory for deep neural networks on $d$-dimensional manifolds $\mathcal{M}^d$. We prove that a constant-depth $\mathrm{ReLU}^{k-1}$ network with bounded weights—a property that plays a crucial role in controlling generalization error—can approximate any function in the Sobolev space $\mathcal{W}_p^{k}(\mathcal{M}^d)$ to an error of $\varepsilon$ in the $\mathcal{W}_p^{s}(\mathcal{M}^d)$ norm, for $k\geq3$ and $s<k$, using $\mathcal{O}(\varepsilon^{-d/(k-s)})$ nonzero parameters. This rate overcomes the curse of dimensionality by depending only on the intrinsic dimension $d$. These results readily extend to functions in Hölder–Zygmund spaces. We complement this result with a matching lower bound, proving our construction is nearly optimal by showing that the required number of parameters matches up to a logarithmic factor. Our proof of the lower bound introduces novel estimates for the Vapnik–Chervonenkis dimension and pseudo-dimension of the network's high-order derivative classes. These complexity bounds provide a theoretical cornerstone for learning PDEs on manifolds involving derivatives. Our analysis reveals that the network architecture leverages a sparse structure to efficiently exploit the manifold's low-dimensional geometry. Finally, we corroborate our theoretical findings with numerical experiments.
\end{abstract}

{\textbf{Keywords and phrases:} Deep neural networks; Sobolev spaces on manifolds; Simultaneous approximation; Complexity estimates; Lower bounds. }

\section{Introduction}
The remarkable success of deep learning in industrial applications, such as computer vision~\cite{krizhevsky2012imagenet}, natural language processing~\cite{graves2013speech}, and robotics~\cite{gu2017deep}, has spurred its growing adoption in scientific computing. While classical numerical methods—like finite element and spectral methods—possess well-established theoretical foundations for low-dimensional Partial Differential Equations (PDEs), they face two major obstacles: the curse of dimensionality in high-dimensional settings, and the significant computational challenges posed by complex geometries. To overcome these critical challenges, deep neural networks (DNNs) have emerged as powerful surrogate models by leveraging their strong nonlinear approximation capabilities. This has engendered novel numerical paradigms for tackling PDE problems, which can be broadly classified into two categories: unsupervised, physics-driven algorithms, such as 
physics-informed neural networks (PINNs)~\cite{raissi2019physics,karniadakis2021physics,cai2021physics, zhou2025weak}, and supervised, data-driven algorithms like neural operators~\cite{li2020fourier, kovachki2023neural}. For a comprehensive review of deep learning approaches for solving PDEs, we refer the reader to~\cite{de2024numerical}. These advances highlight the advantage of deep learning in high-dimensional, nonlinear problems and open new avenues for modeling complex physical phenomena beyond the reach of traditional methods.

This empirical success is theoretically grounded in the universal approximation theorems of the 1980s and 1990s~\cite{cybenko1989approximation, pinkus1999approximation, hornik1991approximation}. These seminal works established that even a simple, shallow network can approximate any continuous function to arbitrary precision. However, these classical theorems are purely qualitative. They guarantee the existence of a suitable network approximator (i.e., they establish class density), but the results are non-constructive: they provide no quantitative guidance on how network architecture—such as its width or depth—should scale with a prescribed accuracy level $\varepsilon$.

The practical success of deep learning has prompted a renewed focus on approximation theory, guiding the field toward establishing quantitative error bounds. A landmark paper~\cite{yarotsky2017error} demonstrated that, by exploiting the compositional structure of deep networks, a $\mathrm{ReLU}$ network with $\mathcal{O}(\varepsilon^{-d/n})$ parameters can achieve an approximation error of $\varepsilon$ for any Sobolev function $f \in W_\infty^n([0,1]^d)$. This seminal work motivated extensive research extending quantitative analyses to various contexts, including different activation functions~\cite{de2021approximation, jiao2023deep}, smoothness assumptions~\cite{lu2021deep}, approximation norms~\cite{guhring2020error}, and network architectures~\cite{zhou2020universality}. Notably, subsequent work has also demonstrated that neural networks can simultaneously approximate a function and its derivatives, showcasing their capacity to capture higher-order information~\cite{guhring2020error,belomestny2023simultaneous,yang2023nearly}. This is precisely the capability that underpins their success in solving PDEs. Since the training objective, such as the PINNs loss, consists of high-order derivative terms, ensuring the residual converges to zero fundamentally requires the network to possess this simultaneous approximation capacity. However, existing results in Euclidean settings tie error rates to the ambient dimension, without yet fully exploiting the low-dimensional geometric structures ubiquitous in real-world applications.

Although most deep learning-based PDE solvers operate in Euclidean settings, a multitude of critical problems are naturally formulated on manifolds. These applications—ranging from geophysical modeling~\cite{haltiner1980numerical, bonito2020divergence, bachini2021intrinsic} and brain modeling~\cite{brosch2013manifold} to computer graphics~\cite{Turk1991GeneratingTO, bertalmio2001navier}—provide the central motivation for developing manifold-aware solvers. The prevalence of such problems has also been a primary driver for the growth of adjacent fields, notably manifold learning~\cite{izenman2012introduction}.

Concurrently, theoretical work has begun to establish approximation guarantees for H{\"o}lder functions on manifolds. Leveraging tools like coordinate charts or random projections (i.e., the Johnson-Lindenstrauss lemma~\cite{eftekhari2015new}), these studies achieve approximation rates dependent solely on the manifold's intrinsic dimension $d$~\cite{SchmidtHieber2019DeepRN, Chen2019NonparametricRO, Labate2024LowDA}. Despite this progress, these results are insufficient for solving PDEs due to two key limitations. First, the error metrics are restricted to $L_p$ norms, which provide no control over derivatives—a prerequisite for handling operators such as the Laplace-Beltrami operator $\Delta_{\mathcal{M}}$. Second, the H{\"o}lder spaces in use often depend on an ambient Euclidean metric~\cite{SchmidtHieber2019DeepRN,Labate2024LowDA}, making their definition non-intrinsic and complicating practical verification.

In this work, we establish the approximation rates achieved by deep neural networks with respect to high-order Sobolev norms for functions defined on a broad class of $d$-dimensional Riemannian manifolds $\mathcal{M}^d$ with bounded geometry. Our approach marks a significant departure from prior work, which has primarily focused on highly specialized settings. A notable example is a pioneering study on simultaneous approximation for the sphere~\cite{lei2025solving}. This work established rates for convolutional neural networks (CNNs) by ingeniously exploiting the sphere's unique zonal structure. On the sphere, the reproducing kernels of diffusion polynomial spaces depend only on the geodesic distance, which in turn is a simple function of the ambient Euclidean inner product. This specific geometric property was key to leveraging the known efficiency of CNNs in modeling functions of inner products~\cite{zhou2020universality,zhou2020theory}, thereby achieving simultaneous approximation. Our theory, in contrast, relies on no such specialized structural assumptions.

The CNN-based methodology is not limited to a specific sphere. Its underlying mechanism—the zonal structure—is a fundamental characteristic of two-point homogeneous spaces~\cite{wang1952two}, a class distinguished by the existence of a convenient addition formula~\cite{evarist1975addition}. This class of manifolds, however, is exceptionally restrictive. It has been fully classified and consists of just a few canonical families: spheres, projective manifolds, and the Cayley projective plane. Our work takes a more general approach, free from such constraints, with sparse localization constructions that may in turn illuminate the feasibility of CNN-like architectures beyond these special cases.

To establish the optimality of these rates, we derive a matching lower bound for the Sobolev space on manifolds $\mathcal{W}^k_\infty(\mathcal{M}^d)$. Specifically, we demonstrate that approximating any function in this space to an accuracy $\varepsilon$ (in the $\mathcal{W}^s_\infty(\mathcal{M}^d)$ norm) necessitates a constant-depth $\mathrm{ReLU}^{k-1}$ network with at least $S$ parameters, where $ S\,\log S\;\gtrsim\;\varepsilon^{-\frac{d}{\,k - s\,}}.$ This result confirms that the network complexity is essentially governed by the manifold's intrinsic dimension $d$, not the ambient one. Our proof introduces novel estimates for both the Vapnik-Chervonenkis (VC) dimension and the pseudo-dimension of the function class realized by the $s$-th order derivatives of $\mathrm{ReLU}^{k-1}$ networks. These complexity bounds represent a contribution of independent interest. Whereas existing complexity analyses for $\mathrm{ReLU}$ networks have focused on the network's output or its first derivatives~\cite{bartlett2019nearly,yang2023nearly}, our work provides the first such bounds for high-order derivative classes. This is a critical prerequisite for developing a complete convergence theory for PINNs governed by high-order PDEs, which our results address.

This paper is organized as follows. \autoref{Section: Preliminaries} introduces our assumptions, several equivalent definitions of Sobolev norms on manifolds, and the neural network architecture. \autoref{Section: Main Result} presents our main theorems, establishing matching upper and lower bounds for approximation rates in Sobolev and Hölder–Zygmund spaces, which demonstrates the optimality of the network complexity. In~\autoref{Experiments} we give numerical experiments that validate the theoretical findings and provide architectural design insights. We discuss the implications and future research directions in \autoref{Section: Discussion}. Proofs of our main results are provided in \autoref{Section: Proofs}, while all notation, auxiliary lemmas, and their proofs are collected in the \hyperref[allapp]{Appendix}.

\section{Preliminaries}\label{Section: Preliminaries}
In this section, we state our main assumption. We consider a compact, connected, complete, $d$-dimensional Riemannian manifold $\mathcal{M}^d$ with bounded geometry (see~\autoref{Appendix: Notations} for relevant background). A key consequence of the bounded geometry assumption is the existence of a uniform family of local charts, which are essential for defining Sobolev spaces on manifolds. This is formalized in the following result.
\begin{proposition}[Theorem 1 in~\cite{de2021reproducing}]\label{proposition: local charts}
Let $\mathcal{M}^d$ satisfy the aforementioned assumptions. Then for any radius $r > 0$ that is sufficiently small (i.e., less than the injectivity radius of $\mathcal{M}^d$), there exists a finite smooth atlas $\{U_j, \psi_j\}_{j=1}^K$ and a corresponding smooth partition of unity $\{\rho_j\}_{j=1}^K$ with the following properties:
\begin{enumerate}[(i)]
    \item The atlas: The collection of charts is constructed from a finite set of points $\{m_j\}_{j=1}^K \subset \mathcal{M}^d$ such that $\{U_j\}_{j=1}^K$ forms a finite open cover of $\mathcal{M}^d$. For each $j \in \{1, \dots, L\}$, the chart is given by:
    \begin{equation*} \label{eq:compact_atlas}
        U_j = B(m_j, r), \qquad \psi_j: U_j \to \mathbb{R}^d, \quad \text{where} \quad \psi_j(m) = \exp_{m_j}^{-1}(m).
    \end{equation*}

    \item The partition of unity: The family of smooth functions $\{\rho_j\}_{j=1}^K$ forms a partition of unity corresponding to the cover $\{U_j\}$, satisfying for all $j \in \{1, \dots, L\}$:
    \begin{equation*} 
        0 \le \rho_j \le 1, \qquad \mathrm{supp}(\rho_j) \subset U_j, \qquad \text{and} \qquad \sum_{j=1}^K \rho_j = 1.
    \end{equation*}
\end{enumerate}
  
\end{proposition}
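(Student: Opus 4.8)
Since this is quoted as Theorem~1 of~\cite{de2021reproducing}, the plan is to indicate how it reduces to two classical facts of Riemannian geometry and to isolate the single nontrivial point. The two facts are: (a) under the bounded-geometry hypothesis the injectivity radius of $\mathcal{M}^d$ is bounded below by a positive constant $r_{\mathrm{inj}}>0$; and (b) every smooth manifold admits a smooth partition of unity subordinate to any given open cover. Granting these, the argument is short.

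First I would fix $r<r_{\mathrm{inj}}$, so that for every $m\in\mathcal{M}^d$ the exponential map $\exp_m\colon B(0,r)\subset T_m\mathcal{M}^d\to B(m,r)$ is a diffeomorphism onto the open geodesic ball; composing with a choice of linear isometry $T_m\mathcal{M}^d\cong\mathbb{R}^d$ makes $\psi_m:=\exp_m^{-1}$ a smooth chart. The balls $\{B(m,r)\}_{m\in\mathcal{M}^d}$ cover $\mathcal{M}^d$, and compactness yields a finite subcover; calling its centers $\{m_j\}_{j=1}^K$ and setting $U_j=B(m_j,r)$, $\psi_j=\exp_{m_j}^{-1}$ gives a finite atlas. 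Smooth compatibility is automatic because each transition map $\psi_i\circ\psi_j^{-1}$ is a composition of exponential maps and their inverses, which are smooth on their domains of definition.

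For the partition of unity I would shrink the cover slightly — again using compactness, one can pick radii $r'<r$ with $\{B(m_j,r')\}$ still covering $\mathcal{M}^d$ — choose smooth bump functions $\tilde\rho_j\ge0$ that are positive on $\overline{B(m_j,r')}$ and supported in $U_j$, and normalize $\rho_j=\tilde\rho_j/\sum_{i=1}^K\tilde\rho_i$. The denominator is smooth and strictly positive on $\mathcal{M}^d$, so the $\rho_j$ are smooth, satisfy $0\le\rho_j\le1$, $\mathrm{supp}(\rho_j)\subset U_j$, and $\sum_{j=1}^K\rho_j\equiv1$, as required.

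The one point that is not purely formal is fact (a) together with the quantitative control needed downstream, namely that in these normal coordinates the metric tensor admits uniform two-sided bounds and uniformly bounded derivatives. On a compact manifold (a) is immediate — the injectivity radius is a positive continuous function, hence attains a positive minimum — and the metric control follows from Rauch-type Jacobi-field comparison using the bounded sectional curvature and its covariant derivatives. In the general bounded-geometry setting this is precisely the content one borrows from~\cite{de2021reproducing}; I expect establishing the \emph{uniform} chart size and distortion to be the main obstacle, since that is exactly the role the bounded-geometry assumption is designed to play.
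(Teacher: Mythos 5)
Your sketch is correct: the paper itself gives no proof of this proposition (it is imported verbatim as Theorem~1 of~\cite{de2021reproducing}), and your argument—choosing $r$ below the injectivity radius, taking exponential-map charts, extracting a finite subcover by compactness, and normalizing bump functions subordinate to a slightly shrunken cover—is the standard construction underlying the cited result. Your closing remark is also apt: on a compact manifold the existence statement is elementary, and what the bounded-geometry hypothesis (and the reference) really supplies is the uniform control of the metric and its derivatives in these normal coordinates, which is what the paper later uses for the Sobolev-norm equivalences.
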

\subsection{Sobolev and H{\"o}lder-Zygmund Functions on Manifolds}\label{Subsection: Sobolev and Holder-Zygmund functions on manifolds}
Sobolev spaces on a manifold are typically defined in two ways. The local approach defines norms on coordinate charts, which are then pieced together into a global definition using a partition of unity. Alternatively, the global approach constructs a coordinate-independent space directly from the manifold's intrinsic geometry. For integer orders, this global construction can be based on the volume form and Levi-Civita connection, which provides a framework for embedding inequalities~\cite{hebey1996sobolev}. For non-integer orders, the space is defined via the spectral theory of the Laplace-Beltrami operator. This spectral approach, developed using heat kernel methods~\cite{strichartz1983analysis}, further establishes that for compact manifolds and sufficient smoothness index, these spaces become Reproducing Kernel Hilbert Spaces with a characterizable kernel~\cite{de2021reproducing}.

In this paper, we adopt this framework to derive approximation rates for neural networks targeting Sobolev functions. We assume the manifold satisfies the necessary conditions for these different definitions to be equivalent. We follow the framework of~\cite{de2021reproducing}, which defines and establishes the equivalence of various Sobolev norms for the Sobolev exponent $p=2$. These equivalences extend to $p \geq 1$ with minor adjustments.

\begin{theorem}[Theorem 3 in~\cite{de2021reproducing}]\label{theorem: sobolev equivalence}
    Let $\mathcal{M}^d$ be a $d$-dimensional, compact, connected, and complete Riemannian manifold with bounded geometry. For a smoothness order $s>0$, the following conditions are equivalent:
    \begin{enumerate}[(i)]
        \item 
        \begin{equation}\label{equation: sobolev definition1}
             \|f\|_{\mathcal{W}^{s}_2,1}^2 := \sum_{j} \Bigl\|\;(\rho_j f)\circ \psi_j^{-1}\Bigr\|_{W^{s}_2(\mathbb{R}^d)}^2 < +\infty,
        \end{equation}
        where $\{(U_j,\psi_j)\}$ is a system of local coordinate charts on $\mathcal{M}^d$ and $\{\rho_j\}$ is a corresponding smooth partition of unity given by~\autoref{proposition: local charts}.
        
        \item There exists a function $g \in L^2(\mathcal{M}^d)$ such that:
        \begin{equation}\label{equation: sobolev definition2}
            f = (I+\Delta_{\mathcal{M}})^{-s/2}\, g, \quad  \|f\|_{\mathcal{W}^{s}_2,2} := \|g\|_{L^2(\mathcal{M}^d)}.
        \end{equation}
        Here, the operator $(I+\Delta_{\mathcal{M}})^{-s/2}$ is defined via the spectral calculus with the function $\Phi(\lambda) = (1+\lambda)^{-s/2}$.
        
        \item The function $f$ lies in the domain of the operator $\Delta_{\mathcal{M}}^{s/2}$, i.e., $f \in \mathrm{dom}(\Delta_{\mathcal{M}}^{s/2})$, and satisfies:
        \begin{equation}\label{equation: sobolev definition3}
            \|f\|_{\mathcal{W}^{s}_2,3}^2 := \|f\|_{L^2(\mathcal{M}^d)}^2 + \|\Delta_{\mathcal{M}}^{s/2} f\|_{L^2(\mathcal{M}^d)}^2 < +\infty.
        \end{equation}
    \end{enumerate}
    That is, there exist constants $c_1, c_2, c_3, c_4 > 0$, independent of $f$, such that the norms are equivalent:
    \begin{equation*}
        c_1\|f\|_{\mathcal{W}^{s}_2,1} \leq c_2 \|f\|_{\mathcal{W}^{s}_2,2} \leq c_3 \|f\|_{\mathcal{W}^{s}_2,3} \leq c_4\|f\|_{\mathcal{W}^{s}_2,1}.
    \end{equation*}
    Furthermore, for a positive integer $s\in \mathbb{N}_+$, the Sobolev space $\mathcal{W}^s_2(\mathcal{M}^d)$ can be defined as the closure of smooth functions under the norm involving covariant derivatives:
    \begin{equation}\label{equation: sobolev definition4}
        \mathcal{W}^s_2(\mathcal{M}^d) = \overline{\left\{f\in C^\infty(\mathcal{M}^d)\ \Big|\ \sum_{\ell=0}^s \|\nabla^\ell f\|_{L^2(\mathcal{M}^d)}^2 < +\infty\right\}},
    \end{equation}
    and the associated norm is equivalent to those defined above.
\end{theorem}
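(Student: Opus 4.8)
The plan is to establish the cyclic chain $\|f\|_{\mathcal{W}^s_2,1} \lesssim \|f\|_{\mathcal{W}^s_2,2} \lesssim \|f\|_{\mathcal{W}^s_2,3} \lesssim \|f\|_{\mathcal{W}^s_2,1}$, which makes all three norms mutually equivalent. The backbone is the spectral resolution of the non-negative self-adjoint operator $\Delta_{\mathcal{M}}$: since $\mathcal{M}^d$ is compact, $\Delta_{\mathcal{M}}$ has discrete spectrum $0 = \lambda_0 < \lambda_1 \le \lambda_2 \le \cdots \to \infty$ with an $L^2$-orthonormal eigenbasis $\{\phi_k\}$. Expanding $f = \sum_k \hat f_k \phi_k$, the functional calculus gives $\|f\|_{\mathcal{W}^s_2,2}^2 = \sum_k (1+\lambda_k)^{s}\,|\hat f_k|^2$ and $\|f\|_{\mathcal{W}^s_2,3}^2 = \sum_k (1+\lambda_k^{s})\,|\hat f_k|^2$, and the elementary two-sided bound $1+\lambda^{s} \asymp (1+\lambda)^{s}$ for $\lambda \ge 0$ immediately yields $\|f\|_{\mathcal{W}^s_2,2} \asymp \|f\|_{\mathcal{W}^s_2,3}$. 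This reduces everything to comparing the intrinsic quantity $\|(I+\Delta_{\mathcal{M}})^{s/2} f\|_{L^2}$ with the chartwise quantity $\big(\sum_j \|(\rho_j f)\circ\psi_j^{-1}\|_{W^s_2(\mathbb{R}^d)}^2\big)^{1/2}$.

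I would first treat integer $s \in \mathbb{N}_+$, where classical elliptic theory suffices. By the partition of unity, $f = \sum_j \rho_j f$ with each $\rho_j f$ supported in the single chart $U_j$; in the normal coordinates $\psi_j$ the covariant derivatives $\nabla^\ell f$ and the ordinary partial derivatives of $(\rho_j f)\circ\psi_j^{-1}$ up to order $\ell$ differ only by terms linear in strictly-lower-order derivatives with coefficients polynomial in the Christoffel symbols and their derivatives --- all uniformly bounded under bounded geometry --- so an induction on $\ell$ gives, chart by chart, $\sum_{\ell \le s}\|\nabla^\ell f\|_{L^2}^2 \asymp \sum_j \|(\rho_j f)\circ\psi_j^{-1}\|_{W^s_2(\mathbb{R}^d)}^2$ (this is the content of~\cite{hebey1996sobolev}, and it simultaneously establishes the last assertion of the theorem, equation~\eqref{equation: sobolev definition4}). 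It then remains to match $\sum_{\ell\le s}\|\nabla^\ell f\|_{L^2}^2$ with $\|f\|_{L^2}^2 + \|\Delta_{\mathcal{M}}^{s/2}f\|_{L^2}^2$: the upper bound $\|\Delta_{\mathcal{M}}^{s/2}f\|_{L^2} \lesssim \sum_{\ell\le s}\|\nabla^\ell f\|_{L^2}$ is a Bochner-type expansion of the (integer or half-integer power of the) Laplacian in covariant derivatives, and the reverse is Gårding's inequality / elliptic regularity for the elliptic operator $\Delta_{\mathcal{M}}^{\lfloor s/2\rfloor}$ (for odd $s$, supplemented by one covariant derivative, using $\|\Delta_{\mathcal{M}}^{s/2}f\|_{L^2} = \|\nabla\Delta_{\mathcal{M}}^{(s-1)/2}f\|_{L^2}$), localized via the $\rho_j$ at the cost of lower-order commutator terms absorbed by induction on $s$.

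For non-integer $s$, I would deduce the equivalence by complex interpolation: both the spectral scale (definitions (ii)/(iii), i.e. the weighted $\ell^2$-spaces $\sum_k (1+\lambda_k)^{s}|\hat f_k|^2 < \infty$) and the chartwise scale (definition (i), built from the classical Bessel-potential spaces $W^s_2(\mathbb{R}^d)$) are exact interpolation scales under $[\,\cdot\,,\,\cdot\,]_\theta$ with the intermediate exponent, so the norm equivalence at integer orders propagates to all $s>0$; alternatively, and more uniformly, one can realize $(I+\Delta_{\mathcal{M}})^{\pm s/2}$ as a classical pseudodifferential operator of order $\pm s$ via heat-semigroup subordination together with the Gaussian heat-kernel bounds furnished by bounded geometry (in the spirit of~\cite{strichartz1983analysis,de2021reproducing}) and run the chartwise argument directly. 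I expect the crux to be the localization step common to both routes: controlling the commutators of the global operator $(I+\Delta_{\mathcal{M}})^{s/2}$ (or $\Delta_{\mathcal{M}}^{\lfloor s/2\rfloor}$) with the partition-of-unity factors $\rho_j$, showing these are of strictly lower order and hence absorbable, and checking that every constant is uniform across the finitely many charts --- a task for which the bounded-geometry hypothesis (uniform control of the metric and its derivatives in normal coordinates) is precisely what is needed, while compactness of $\mathcal{M}^d$ conveniently removes the uniformity-at-infinity difficulties that usually accompany such arguments.
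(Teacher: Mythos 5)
Your sketch is essentially correct, but note first that the paper does not prove this statement: it is imported verbatim as Theorem~3 of~\cite{de2021reproducing}, and the text only points the reader to that paper's proof, so there is no internal argument to compare against. The cited work (building on~\cite{strichartz1983analysis} and the theory of function spaces on manifolds of bounded geometry) obtains the equivalence through the spectral/heat-kernel description of $(I+\Delta_{\mathcal M})^{s/2}$, whereas you propose the classical two-stage route: integer orders by local coordinates plus elliptic theory, then interpolation. Your individual steps are sound: (ii) and (iii) are equivalent by functional calculus together with $1+\lambda^{s}\asymp(1+\lambda)^{s}$ on $[0,\infty)$; for integer $s$ the chartwise norm is comparable to the covariant-derivative norm under bounded geometry (which also yields \eqref{equation: sobolev definition4}), and the latter is comparable to $\|f\|_{L^2}+\|\Delta_{\mathcal M}^{s/2}f\|_{L^2}$ by Bochner-type commutation plus elliptic regularity, your identity $\|\Delta_{\mathcal M}^{s/2}f\|_{L^2}=\|\nabla\Delta_{\mathcal M}^{(s-1)/2}f\|_{L^2}$ for odd $s$ being valid for the nonnegative Laplacian $\Delta_{\mathcal M}=-\mathrm{div}_g(\nabla\cdot)$ used here. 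The one place where you assert rather than argue the decisive fact is the non-integer case: that the chartwise scale of definition (i) is itself an exact complex-interpolation scale is exactly the nontrivial localization statement, normally proved by exhibiting $f\mapsto((\rho_j f)\circ\psi_j^{-1})_j$ as a retract of $\bigoplus_j W^{s}_2(\mathbb R^d)$, with a coretraction built from enlarged cutoffs and with multiplier and diffeomorphism-invariance bounds uniform over the finitely many charts; alternatively one invokes the pseudodifferential realization of $(I+\Delta_{\mathcal M})^{\pm s/2}$ as you mention. If you make that retract (or symbol-calculus) step explicit, your argument closes and is a legitimate, more self-contained alternative to the citation the paper relies on.
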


For a sketch of the proof, see the proof of Theorem 3 in~\cite{de2021reproducing}. In this work, we primarily adopt the Sobolev norm given by~\eqref{equation: sobolev definition1}. As the theorem demonstrates, this choice of norm is natural, well-justified, and frequently appears in various applications.

Building on this, we now define the H{\"o}lder-Zygmund space on the manifold as:
\begin{equation*}
    \mathcal{CH}^s (\mathcal{M}^d) := \left\{f \ \Big|\ \|f\|_{CH^s(\mathcal{M}^d)} := \sup_{j} \|(\rho_jf)\circ\psi_j^{-1}\|_{CH^s(\mathbb{R}^d)} < +\infty\right\}.
\end{equation*}
Here, $CH^s(\mathbb{R}^d)$ denotes the classical H{\"o}lder-Zygmund space on $\mathbb{R}^d$, which is equivalent to the Besov space $B^s_{\infty,\infty}$ (see~\cite[Section 1.2.2 and 1.5.1]{Triebel1992}). We consider the case where $s$ is not an integer, for which the H{\"o}lder-Zygmund space coincides with the classical H{\"o}lder space. Specifically, its norm is defined as:
\begin{equation}\label{equation: CH definition}
    \|f\|_{\mathcal{CH}^s(\mathbb{R}^d)} := \max\left\{ \sup_{|\alpha|\leq\lfloor s\rfloor }\|D^{\alpha }f\|_{L_\infty(\mathbb{R}^d)} , \sup_{|\alpha|=\lfloor s\rfloor }\sup_{x \not=y} \frac{|D^\alpha f(x)-D^\alpha f(y)|}{\|x-y\|^{s-\lfloor s\rfloor}} \right\}.
\end{equation}
A proof of this equivalence can be found in~\cite[Sec. 1.2.2]{Triebel1992}.
\subsection{Mathematical Definition of Neural Networks}\label{Subsubsection: Mathematical Definition of Neural Networks}

Given $\Omega \subset \mathbb{R}^m$, $\mathbf{b} = (b_1, b_2, \dots, b_r)^T$ and $\mathbf{y} = (y_1, y_2, \dots, y_r)^T$, we define the shifted activation function as $\sigma_{\mathbf{b}}(\mathbf{y}) = (\sigma(y_1-b_1), \sigma(y_2-b_2), \dots, \sigma(y_r-b_r))^T: \mathbb{R}^r \to \mathbb{R}^r$. The neural networks can be expressed as a family of real-valued functions of the form
\begin{equation}\label{equation: Neural Network}
    f:\mathbb{R}^{d}\to\mathbb{R},\quad x\mapsto f(x)= W_{L}\sigma_{\mathbf{v}_L}W_{L-1}\sigma_{\mathbf{v}_{L-1}}\cdots W_1\sigma_{\mathbf{v}_1}W_0x.
\end{equation}
Our neural network architecture consists of an input layer of dimension $m$, $L$ hidden layers, and a scalar output. The dimensions are given by $p_0=d$, $p_{L+1}=1$, and the width of the $i$-th hidden layer is $p_i$. A critical component for our analysis is the activation function. The standard $\mathrm{ReLU}$ function is fundamentally limited for simultaneous approximation, as its vanishing higher-order derivatives prevent it from capturing the structure of functions in Sobolev spaces. To overcome this, we employ the $\mathrm{ReLU}^{k-1}$ activation, defined as $\sigma(x) = \max\{x,0\}^{k-1}$ for an integer $k \ge 3$. This function possesses the necessary smoothness to approximate a target function and its derivatives concurrently. The neural network architecture is formally parameterized by a sequence of weight matrices $\{W_i\}_{i=0}^L$ and bias vectors $\{\mathbf{v}_i\}_{i=1}^L$, which are optimized during training. In this paper, we specifically view neural networks as implementing the parameterized function class $f(\cdot;\{W_k,\mathbf{v}_k\})$ defined in equation~\eqref{equation: Neural Network}.

The neural network function space can be characterized by its depth $L$, width vectors $\{p_i\}_{i=0}^{L+1}$, and the number of non-zero parameters in  weights $\{W_i\}_{i=0}^L$ and biases $\{\mathbf{v}_i\}_{i=1}^L$. In addition, the complexity of this space is also determined by the $\|\cdot\|_{\infty}$-bounds of the network parameters and the $\|\cdot\|_{L_{\infty}}$-bounds of the network output $f$ in form~\eqref{equation: Neural Network}. Let $(Q,G) \in [0,\infty)^2$ and $(S,B,F) \in [0,\infty]^3$. Denote $\max\{m_1, m_2, ..., m_l\}$ by $m_1 \vee m_2  \vee \ldots  \vee m_L$ and $\min\{m_1, m_2, ..., m_l\}$ by $m_1 \wedge m_2  \wedge \ldots  \wedge m_L$. The function space of neural networks is then defined as
 \begin{equation}\label{equation: Assumption for hypothesis space}
    \begin{aligned}
        &\mathcal{F}(Q,G, S, B, F) \\
        ={}& \left\{
        f:\mathbb{R}^d \to\mathbb{R} \left\vert
            \begin{array}{l}
                 \text{$f$ is defined by \eqref{equation: Neural Network}, satisfying that } \\
                 \text{$L \leq Q$ and $p_1 \vee p_2  \vee \ldots  \vee p_L \leq G $, } \\
                 \text{$\left(\sum_{i=0}^L\|W_i\|_{0}\right) +\left(\sum_{i=1}^L \|\mathbf{v}_i\|_0 \right)\leq S $, }\\
                 \text{$ \sup_{k=0,1,\ldots,L} \|W_k\|_\infty \lor \sup_{k=1,\ldots,L} \|\mathbf{v}_k\|_\infty \leq B,$ } \\
                 \text{and $\|f\|_{L_{\infty}} \leq F .$} \\
             \end{array}
             \right.
             \right\}.
         \end{aligned}
    \end{equation}
    
 For brevity, we will often denote the function space from Definition~\eqref{equation: Assumption for hypothesis space} as $\mathcal{F}$. The parameters in this definition may be set to infinity; for instance, $B=\infty$ or $F=\infty$ signifies the absence of constraints on the network's weight magnitudes or its $L_\infty$ norm, while $S=\infty$ corresponds to a dense (non-sparse) architecture. Rigorous mathematical analysis shows that in neural network approximation theorems, such as those presented in~\cite{yarotsky2017error,guhring2020error}, the parameter bound $B$  must diverge to infinity as the approximation error tends to zero. Consequently, the space  $\mathcal{F}(Q,G,S,\infty,F)$ is non‐compact, which in turn gives it an infinite uniform covering number and presents substantial challenges for subsequent statistical analysis. Therefore, a key focus of our construction is the careful characterization of these parameter bounds. Specifically, we demonstrate that the desired approximation can be achieved while maintaining a fixed weight-norm bound of $B=1$.

\section{Main Results}\label{Section: Main Result}
In this section, we present the main theorems of this paper. We first present the upper bound result.

\noindent\textbf{Upper bound.} We begin with a theorem on the simultaneous approximation of Sobolev functions on manifolds:

\begin{theorem}\label{theorem:main result sobolev}
    Let $\mathcal{M}^d \subset \mathbb{R}^D$ be a smooth, compact, connected, and complete $d$-dimensional Riemannian manifold with bounded geometry, where $d \ll D$. Let $p \geq 1$, let $k \geq 3$ be an integer, and let $f \in \mathcal{W}_p^k(\mathcal{M}^d)$ be as defined in~\autoref{theorem: sobolev equivalence}.
    
    Then for any $\epsilon > 0$, there exists a fully connected deep neural network $g \in \mathcal{F}(L,W,S,1,\infty)$ using the $\mathrm{ReLU}^{k-1}$ activation function that simultaneously satisfies:
    \begin{equation*}
        \|f-g\|_{\mathcal{W}_p^s(\mathcal{M}^d)} \leq \epsilon \quad \text{for all } s \in \{0, 1, \ldots, k-1\}.
    \end{equation*}
    Furthermore, the network parameters are bounded by:
    \begin{equation*}
        L \leq C, \quad W \leq C \epsilon^{-\frac{d}{k-s}}, \quad S \leq C \epsilon^{-\frac{d}{k-s}},
    \end{equation*}
    for a constant $C = C(p,d,D,\mathcal{M}^d,k)$. A precise characterization of $C$ is provided in the proof (see~\autoref{Subsection: Upper Bounds} and~\autoref{Appendix: Auxiliary Lemma and Proofs}) but is omitted here for brevity.
\end{theorem}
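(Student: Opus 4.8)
The plan is to reduce the global approximation problem on the manifold to a finite collection of local approximation problems on Euclidean balls, solve each local problem with a bounded-weight $\mathrm{ReLU}^{k-1}$ network, and then recombine the pieces. Concretely, I would fix a radius $r$ below the injectivity radius and invoke \autoref{proposition: local charts} to obtain the atlas $\{U_j,\psi_j\}_{j=1}^K$ and the partition of unity $\{\rho_j\}_{j=1}^K$. Writing $f=\sum_{j=1}^K \rho_j f$, each summand $f_j:=(\rho_j f)\circ\psi_j^{-1}$ is a compactly supported function on a bounded region of $\mathbb{R}^d$; by the definition~\eqref{equation: sobolev definition1} of the Sobolev norm (and the analogous statement for general $p$) and the fact that $\rho_j$ is smooth, $f_j \in W_p^k(\mathbb{R}^d)$ with norm controlled by $\|f\|_{\mathcal{W}_p^k(\mathcal{M}^d)}$. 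The error $\|f-g\|_{\mathcal{W}_p^s(\mathcal{M}^d)}$ is then bounded, again through~\eqref{equation: sobolev definition1}, by $\sum_j \|(\rho_j(f-g))\circ\psi_j^{-1}\|_{W_p^s(\mathbb{R}^d)}$, which after inserting local approximants reduces to estimating $\|f_j - g_j\|_{W_p^s}$ for each $j$, provided $g$ itself admits a partition-of-unity-compatible decomposition $g=\sum_j g_j\circ\psi_j$ matching the supports.

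**Next I would** establish the Euclidean building block: for a compactly supported $h\in W_p^k([0,1]^d)$ and any $\delta>0$, there is a constant-depth $\mathrm{ReLU}^{k-1}$ network with weight bound $B=1$, $\mathcal{O}(\delta^{-d/(k-s)})$ width and nonzero parameters, achieving $\|h-\tilde h\|_{W_p^s} \leq \delta$ for all $s\le k-1$ simultaneously. The natural route is a partition-of-unity / tensor-product B-spline (or local polynomial) approximation: cover $[0,1]^d$ by $\mathcal{O}(N^d)$ subcubes of side $1/N$, on each take the degree-$(k-1)$ Taylor polynomial or a quasi-interpolant, glue with smooth bump functions, and obtain error $\mathcal{O}(N^{-(k-s)})$ in $W_p^s$ by Bramble–Hilbert type estimates. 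Then one must \emph{exactly} realize these piecewise-polynomial bump-weighted sums by $\mathrm{ReLU}^{k-1}$ networks — here I would use that a univariate $\mathrm{ReLU}^{k-1}$ combination reproduces any polynomial of degree $\le k-1$ and any spline of that degree, that products of $k$ such one-dimensional pieces can be formed using a polarization/identity trick for multiplication (since $xy$ and higher monomials are exactly representable by $\mathrm{ReLU}^{k-1}$ units when $k-1\ge 2$, i.e. $k\ge 3$ — this is precisely where the hypothesis $k\ge3$ enters), and that composing a constant number of such gadgets keeps the depth constant. The weight-rescaling argument — distributing large constants across layers as $k$-th roots so that every entry has magnitude $\le 1$ — gives $B=1$ at the cost of only a constant depth increase.

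**The bound-$B=1$ bookkeeping** and the control of how derivatives propagate through the chart maps $\psi_j^{-1}$ will be the main technical obstacles. For the former, one must track that rescaling weights to unit magnitude does not inflate the parameter count beyond a constant factor and is compatible with the $\mathrm{ReLU}^{k-1}$ homogeneity ($\sigma(cx)=c^{k-1}\sigma(x)$), so that absorbed constants reappear correctly in downstream layers; this requires a careful layer-by-layer induction. For the latter, since the network is defined on the ambient $\mathbb{R}^D$ but must approximate in the intrinsic $\mathcal{W}_p^s(\mathcal{M}^d)$ norm, I would compose the local Euclidean networks with (an approximation of) the smooth maps $m\mapsto\psi_j(m)=\exp_{m_j}^{-1}(m)$ and multiply by (an approximation of) $\rho_j$; the bounded-geometry assumption guarantees uniform $C^k$ bounds on these coordinate maps and their inverses, so the chain rule produces only $j$-uniform constants when converting the $W_p^s(\mathbb{R}^d)$ error back to the manifold norm. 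Finally, since the chart maps and partition functions are fixed smooth objects (independent of $\epsilon$), they can either be pre-composed exactly into the architecture (if the manifold structure is available) or themselves $\mathrm{ReLU}^{k-1}$-approximated to accuracy $\mathcal{O}(\epsilon)$ using $\mathcal{O}(1)$ extra parameters in $s$-norm via the same Euclidean building block; I would choose $\delta\asymp\epsilon/K$ in the local approximations and sum the $K=\mathcal{O}(1)$ errors to conclude. Assembling the $K$ sub-networks in parallel and summing their outputs yields the final $g$, whose depth is $\mathcal{O}(1)$ and whose width and sparsity are $K\cdot\mathcal{O}(\epsilon^{-d/(k-s)})=\mathcal{O}(\epsilon^{-d/(k-s)})$, as claimed.
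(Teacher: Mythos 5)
Your overall architecture matches the paper's: chart/partition-of-unity decomposition, a Euclidean building block based on B-spline quasi-interpolation realized exactly by $\mathrm{ReLU}^{k-1}$ networks with Bramble--Hilbert error bounds, composition with network approximants of the chart maps, and parallel summation over the $K$ charts. However, there is a genuine gap at the composition step, and it is precisely the point the paper identifies as the principal difficulty. When you replace $\psi_j$ by a network $\widetilde{\psi}_j$ and measure the error of $\widetilde{F}_j\circ\widetilde{\psi}_j$ against $F_j\circ\psi_j$ in $W_p^s$, the Fa\`a di Bruno stability estimate produces a term of the form $\|\widetilde{F}_j\|_{W_\infty^{s+1}}\,\|\psi_j-\widetilde{\psi}_j\|_{L_\infty}$, where $\widetilde{F}_j$ is the \emph{constructed approximant}, not the smooth target. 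For the spline-based construction one only has $\|\widetilde{F}_j\|_{W_\infty^{s+1}}\lesssim N^{s+1+d/p}\asymp \epsilon^{-\frac{sp+p+d}{p(k-s)}}$, which diverges as $\epsilon\to0$. Your proposal to approximate the chart maps ``to accuracy $\mathcal{O}(\epsilon)$ using $\mathcal{O}(1)$ extra parameters'' therefore does not close the argument: the composite error term behaves like $\epsilon^{-\frac{sp+p+d}{p(k-s)}}\cdot\epsilon$, which need not tend to zero. The paper instead demands the much finer accuracy $\epsilon^{\frac{d+p+kp}{(k-s)p}}$ for $\widetilde{\psi}_j$, and this is achievable within the claimed budget only because the (tubular-neighborhood-extended) chart maps are $C^\infty$ on a $D$-dimensional neighborhood, so one can invoke the open-domain lemma with smoothness of order roughly $s+\frac{(d+p+kp)D}{dp}$ and still pay only $\mathcal{O}(\epsilon^{-d/(k-s)})$ parameters — not $\mathcal{O}(1)$, but within the rate. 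Your fallback of ``pre-composing the chart maps exactly into the architecture'' is not available either, since $\exp_{m_j}^{-1}$ is not a $\mathrm{ReLU}^{k-1}$ network and $g$ must lie in $\mathcal{F}(L,W,S,1,\infty)$. Your appeal to bounded geometry and uniform $C^k$ bounds only handles the transfer between the manifold norm and the Euclidean norm with \emph{exact} charts; it does not address the amplification of the inner approximation error by the outer network's growing derivative norms.

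A secondary, smaller issue is the $B=1$ bookkeeping: distributing a large constant $C>1$ ``as $k$-th roots across layers'' never produces factors of magnitude $\le 1$, so homogeneity of $\mathrm{ReLU}^{k-1}$ alone does not yield unit-bounded weights at constant depth. The paper instead rescales the spline coefficients by $N^{-(k-1)}$, synthesizes the constant $N$ by width (summing weight-one units), and restores the scale through the exact multiplication subnetworks, together with the equispaced-node decomposition of low-degree monomials (\autoref{lemma: Decomposition of x^s to ReLU^k}) to keep all coefficients controlled; some argument of this kind is needed to legitimately claim $g\in\mathcal{F}(L,W,S,1,\infty)$.
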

\begin{remark}
    Notably, the network complexity depends only on the manifold's intrinsic dimension $d$. This demonstrates the ability of neural networks to overcome the curse of dimensionality, as their complexity scales with $d$ rather than the potentially much larger ambient dimension $D$. An additional result, presented as a corollary, extends this principle to the Hölder-Zygmund family of functions, demonstrating that the network's ability to exploit low-dimensional geometric structure also applies to this function class.
\end{remark}

\begin{remark}
    Although $\mathrm{ReLU}^{k-1}$ activations may appear susceptible to gradient explosion, this is circumvented here since the depth remains constant. Comparable approximation rates are attainable with deeper $\mathrm{ReQU}=\mathrm{ReLU}^2$ networks: while $\mathrm{ReLU}^{k-1}$ can represent spline functions in a single layer, it often requires excessive width even for simple functions (see~\autoref{lemma: Decomposition of x^s to ReLU^k}), whereas $\mathrm{ReQU}$ avoids this inefficiency~\cite{belomestny2023simultaneous}. The most economical architecture is therefore a final multi-layer $\mathrm{ReQU}$ composed after an initial $\mathrm{ReLU}^{k-1}$ layer. In contrast, multi-layer $\mathrm{ReLU}$ networks remain unsuitable: once the partition-of-unity structure is encoded by splines, the bit-extraction mechanism~\cite{bartlett2019nearly,lu2021deep,yang2023nearly} is no longer applicable. Moreover, since $\mathrm{ReLU}$ can only perform piecewise-linear interpolation, it leads to unbounded local derivatives; consequently, higher-order activations are required to construct smoother partition-of-unity representations.
\end{remark}

\begin{corollary}\label{corollary: main result Holder}
    Let $\mathcal{M}^d \subset \mathbb{R}^D$ be a smooth, compact, connected, and complete $d$-dimensional Riemannian manifold with bounded geometry, where $d \ll D$. Let $k > 2$ be a non-integer, let $p \geq 1$, and let $f \in \mathcal{CH}^k(\mathcal{M}^d)$ be as defined in~\eqref{equation: CH definition}.
    
    Then for any $\epsilon > 0$, there exists a fully connected deep neural network $g \in \mathcal{F}(L,W,S,1,\infty)$ with the $\mathrm{ReLU}^{\lfloor k \rfloor}$ activation function that simultaneously satisfies:
    \begin{equation*}
        \|f-g\|_{\mathcal{CH}^s(\mathcal{M}^d)} \leq \epsilon \quad \text{for all non-integer} \, s <k.
    \end{equation*}
    Furthermore, the network parameters are bounded by:
    \begin{equation*}
        L \leq C, \quad W \leq C \epsilon^{-\frac{d}{k-s}}, \quad S \leq C \epsilon^{-\frac{d}{k-s}},
    \end{equation*}
    for a constant $C = C(p,d,D,\mathcal{M}^d,k)$.
\end{corollary}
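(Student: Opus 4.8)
The plan is to reduce \autoref{corollary: main result Holder} to a single Euclidean estimate — a Jackson-type inequality for tensor-product splines measured in a fractional Hölder norm — and to reuse, essentially verbatim, the manifold-to-Euclidean machinery of \autoref{theorem:main result sobolev}. Fix the finite atlas $\{U_j,\psi_j\}_{j=1}^{K}$ and the subordinate partition of unity $\{\rho_j\}_{j=1}^{K}$ furnished by \autoref{proposition: local charts} (here $K=K(\mathcal{M}^d)=\mathcal{O}(1)$). By the very definition of $\mathcal{CH}^k(\mathcal{M}^d)$ each localized pullback $f_j:=(\rho_j f)\circ\psi_j^{-1}$ is a compactly supported element of $CH^k(\mathbb{R}^d)$ with $\|f_j\|_{CH^k(\mathbb{R}^d)}\le\|f\|_{\mathcal{CH}^k(\mathcal{M}^d)}$, and $f=\sum_{j=1}^{K}f_j\circ\psi_j$ on $\mathcal{M}^d$. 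It therefore suffices to (i) build, for each $j$, a $\mathrm{ReLU}^{\lfloor k\rfloor}$ network $g_j$ on $\mathbb{R}^d$ with $\|f_j-g_j\|_{CH^s(\mathbb{R}^d)}$ small and the advertised parameter budget; (ii) represent the chart maps $m\mapsto\psi_j(m)=\exp_{m_j}^{-1}(m)$, which are restrictions to $U_j$ of smooth maps on a tubular neighborhood of $\mathcal{M}^d\subset\mathbb{R}^D$, by small networks $\widetilde\psi_j$; and (iii) set $g=\sum_{j=1}^K g_j\circ\widetilde\psi_j$ and rescale weights so that $B=1$. Steps (ii)--(iii) — localization away from $\bigcup_jU_j$, smoothness of the exponential chart under bounded geometry, the composition and parallelization lemmas, and the positive homogeneity of $\mathrm{ReLU}^{\lfloor k\rfloor}$ that lets one push all constants into a fixed number of extra layers while keeping every weight $\le 1$ — are insensitive to the smoothness scale in which the error is measured, so I would quote them from the proof of \autoref{theorem:main result sobolev} without change.

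\textbf{The Euclidean core (step (i)).} Let $Q_h$ be a bounded tensor-product B-spline quasi-interpolant of coordinate degree $\lfloor k\rfloor$ on the uniform mesh of size $h$; since $k$ is not an integer, $\lfloor k\rfloor=\lceil k\rceil-1$, so $Q_h$ reproduces all polynomials of degree $\le\lfloor k\rfloor$. Running the local-polynomial argument used in the Sobolev construction, but with the Hölder modulus of the $\lfloor k\rfloor$-th derivatives of $f_j$ (rather than its next integer derivative) as the error term, yields, for every integer $0\le m\le\lfloor k\rfloor$,
\begin{equation*}
    \|f_j-Q_hf_j\|_{W^m_\infty(\mathbb{R}^d)}\;\le\;C\,h^{\,k-m}\,\|f_j\|_{CH^k(\mathbb{R}^d)} .
\end{equation*}
For non-integer $s<k$ the $CH^s$ norm is the classical Hölder norm~\eqref{equation: CH definition}, and the interpolation bound $[v]_{C^{\theta}}\lesssim\|v\|_{L_\infty}^{1-\theta}\|\nabla v\|_{L_\infty}^{\theta}$, applied with $\theta=s-\lfloor s\rfloor$ and $v=D^\alpha(f_j-Q_hf_j)$ for $|\alpha|=\lfloor s\rfloor$, upgrades the last display to $\|f_j-Q_hf_j\|_{CH^s(\mathbb{R}^d)}\le C\,h^{\,k-s}\|f_j\|_{CH^k(\mathbb{R}^d)}$ (equivalently, one may quote the Jackson estimate for splines in $B^s_{\infty,\infty}$ directly). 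Choosing $h\asymp\big(\epsilon/(2K\|f\|_{\mathcal{CH}^k})\big)^{1/(k-s)}$ makes this $\le\epsilon/(2K)$. On $\mathrm{supp}(f_j)$ the spline $Q_hf_j$ has $\mathcal{O}(h^{-d})=\mathcal{O}(\epsilon^{-d/(k-s)})$ active coefficients, and each tensor-product B-spline basis function of coordinate degree $\lfloor k\rfloor$ is realized exactly by a constant-depth $\mathrm{ReLU}^{\lfloor k\rfloor}$ block with $\mathcal{O}(1)$ nonzero parameters, exactly as in the Sobolev construction (its one-dimensional factors being finite combinations of shifted truncated powers, cf.~\autoref{lemma: Decomposition of x^s to ReLU^k}). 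Hence $g_j:=Q_hf_j$ is implemented exactly by a sparse network with $L=\mathcal{O}(1)$ layers and $\mathcal{O}(\epsilon^{-d/(k-s)})$ nonzero weights; summing over the $K=\mathcal{O}(1)$ charts and adding the step-(ii) error gives $\|f-g\|_{\mathcal{CH}^s(\mathcal{M}^d)}\le\epsilon$ with the stated budgets, the constant $C$ absorbing $K,d,D,k,p$ and the geometry of $\mathcal{M}^d$.

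\textbf{Where the work is.} I expect the one genuinely delicate point to be steps (ii)--(iii) \emph{measured in the Hölder--Zygmund norm}: controlling $\|g_j\circ\psi_j-g_j\circ\widetilde\psi_j\|_{\mathcal{CH}^s(\mathcal{M}^d)}$ requires a Faà di Bruno–type estimate bounding the $CH^s$ norm of a composition by a polynomial in the $CH^{\lceil s\rceil}$ norms of the inner and outer maps, combined with a quantitative inverse-function estimate for $\exp_{m_j}^{-1}$ on $U_j$ — both available from bounded geometry, but needing care precisely at the top Hölder seminorm $|\alpha|=\lfloor s\rfloor$, where one again interpolates between $W^{\lfloor s\rfloor}_\infty$ and $W^{\lceil s\rceil}_\infty$ bounds. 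This is the same obstacle already confronted in \autoref{theorem:main result sobolev}, so modulo the replacement of integer Sobolev norms by fractional Hölder norms the argument transfers. A final bookkeeping remark, as in \autoref{theorem:main result sobolev}: the bound $W,S\le C\epsilon^{-d/(k-s)}$ should be read as ``for each target exponent $s<k$, taking $h\asymp\epsilon^{1/(k-s)}$ suffices''; if one insists on a single network meeting all $\mathcal{CH}^s$ tolerances simultaneously, the binding constraint is the smallest admissible $s$, which only enlarges the constant in the exponent.
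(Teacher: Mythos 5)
Your proposal follows essentially the same route as the paper: the paper omits a separate proof of \autoref{corollary: main result Holder}, stating only that it is identical to the proof of \autoref{theorem:main result sobolev} except that the averaged-Taylor-polynomial step is not needed, and your argument is exactly that — the same chart decomposition, spline quasi-interpolation realized exactly by $\mathrm{ReLU}^{\lfloor k\rfloor}$ networks, composition with approximated chart maps via the Faà di Bruno stability estimate, and parallel summation, with the local error now driven by the Hölder modulus of the top derivatives (plus an interpolation step for fractional $s$) in place of the Bramble--Hilbert bound. The points you flag as delicate (the top Hölder seminorm and the composition estimate in $\mathcal{CH}^s$) are precisely the adaptations the paper leaves implicit, so your write-up is, if anything, more explicit than the paper's own treatment.
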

\begin{remark}
    For the case of pure function approximation ($s=0$), our network size bound of $O(\epsilon^{-d/k})$ aligns with previously established results for learning on low-dimensional manifolds~\cite{Chen2019efficient, Chen2019NonparametricRO,SchmidtHieber2019DeepRN}. The main contribution, however, is the simultaneous approximation of smooth derivatives for $s \ge 1$. This demonstrates that a deep neural network is capable of adaptively capturing the intrinsic low-dimensional geometry of the data embedded in a high-dimensional space, including its essential differential information. Crucially, our construction achieves this remarkable adaptability while maintaining uniformly bounded parameters (as signified by $g \in \mathcal{F}(L,W,S,1,\infty)$). This is a significant feature, as many theoretical constructions require network weights to grow unboundedly as the approximation error vanishes~\cite{yarotsky2017error,guhring2020error}. By establishing these approximation rates that depend on the intrinsic dimension $d$, within a well-behaved, compact-like hypothesis space, our work provides a critical step toward developing robust generalization bounds.
\end{remark}

\noindent\textbf{Lower bound.} Having established the upper bounds, we now consider the optimal rates. To this end, we will derive a matching lower bound on the required network complexity. The cornerstone of our lower bound proof is a novel estimate of the complexity of the network's derivative classes, which we measure using the VC dimension and the pseudo-dimension. We begin by recalling the formal definitions (e.g., see~\cite{anthony2009neural}).

\begin{definition}
    Consider a function class $ \mathcal{H} : \mathcal{X} \to \mathbb{R} $ and a set $ A = \{a_i\}_{i=1}^m $, consisting of $m$ points in the input space  $\mathcal{X}$. Let $\mathrm{sgn}(\mathcal{H}) = \{\mathrm{sgn}(h) : h \in \mathcal{H}\} $ denote the set of binary functions $\mathcal{X} \to \{0,1\}$ induced by $\mathcal{H}$. We say that $\mathcal{H}$ or $\mathrm{sgn}(\mathcal{H})$ shatters the set $A$ if $\mathrm{sgn}(\mathcal{H})$ generates all possible dichotomies of $A$, formally expressed as $ \#\{\mathrm{sgn}(h)|_A \in \{0,1\}^m : h \in \mathcal{H}\} = 2^m. $ The VC-dimension of $ \mathcal{H} $ or $ \mathrm{sgn}(\mathcal{H}) $ is defined as the cardinality of the largest subset it can shatter, denoted by $\mathrm{VCDim}(\mathcal{H})$. Furthermore, a set $A$ is said to be pseudo-shattered by $\mathcal{H}$ if there exist thresholds $b_1, b_2, \ldots, b_m \in \mathbb{R}$ such that for every binary vector $v \in \{0,1\}^m$, there exists a function $h_v \in \mathcal{H}$ satisfying $\mathrm{sgn}(h_v(a_i) - b_i) = v_i$ for all $1 \leq i \leq m$. The pseudo-dimension of $\mathcal{H}$, denoted by $\mathrm{PDim}(\mathcal{H})$, is defined as the maximum cardinality of a subset $A \subset \mathcal{X}$ that can be pseudo-shattered by $\mathcal{H}$.
\end{definition}

For any multi-index $\alpha$ with $\lvert\alpha\rvert\le s$, we define the class of its derivative functions as
\begin{equation*}
    D^{\alpha}\mathcal{F} := \{D^{\alpha}u : u \in \mathcal{F}\}.
\end{equation*}
With this notation, we can now state our estimate for the complexity of the class $D^{\alpha}\mathcal{F}$.
\begin{theorem}\label{theorem: VC-dimension}
Let $\mathcal{F}=\mathcal{F}(Q,G,S,\infty,\infty)$ denote the class of functions realized by $\mathrm{ReLU}^{k-1}$ networks, as formally defined in Definition~\eqref{equation: Assumption for hypothesis space}. Then for any multi-index $\alpha$ with $|\alpha| \leq s$, we have
 \begin{equation*}
 \mathrm{PDim}(D^{\alpha}\mathcal{F})\vee \mathrm{VCDim}(D^{\alpha}\mathcal{F}) \lesssim s \cdot 3^s \cdot SQ\log S.
\end{equation*}
The constant here is independent of the network parameters $Q, G,$ and $S$.
\end{theorem}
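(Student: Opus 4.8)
The plan is to bound the complexity of $D^\alpha \mathcal{F}$ by reducing it to a counting argument over the regions on which every network in the class, together with its $\alpha$-th derivative, is a fixed polynomial. The starting point is the standard observation (going back to the VC bounds of \cite{bartlett2019nearly} and refined in \cite{yang2023nearly}) that for a $\mathrm{ReLU}^{k-1}$ network of depth $Q$ and $S$ nonzero parameters, the parameter space $\mathbb{R}^S$ can be partitioned into a bounded number of connected regions on which each neuron's preactivation is a polynomial in the parameters (and, separately, in the input $x$) of controlled degree; the number of such regions is $(\text{const})^{SQ}$ times a polynomial in the relevant degrees. On each such region, the network output $u(x)$ is a polynomial in $x$, and hence so is any fixed partial derivative $D^\alpha u(x)$. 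The key point is that differentiating does not destroy this piecewise-polynomial structure — it only inflates the degree by $|\alpha| \le s$ and, crucially for the $3^s$ factor, it spreads the "breakpoint" information through the chain rule: each of the $|\alpha|$ derivatives acting on a composition of $Q$ layers can land on any of the layers, so the number of distinct polynomial pieces needed to describe $D^\alpha u$ grows by a factor that we will bound by $3^{|\alpha|} \le 3^s$ (roughly, at each layer the derivative either does not act, acts once, or propagates a previously-accumulated derivative, giving a ternary branching that is counted exactly once per layer).

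Concretely, I would proceed in the following steps. First, fix the combinatorial type of the network (which $S$ parameters are nonzero and how they are wired); there are at most $\binom{(\text{total params})}{S}$-many types, but since we only want the dependence on $S, Q$, this contributes a $\log S$ factor to the final exponent via the $S\log S$ term, exactly as in \cite{bartlett2019nearly}. Second, for a fixed type, write $D^\alpha u(x;\theta)$ via the Faà di Bruno / iterated chain rule over the $Q$ layers; the result is a sum over at most $3^{|\alpha|}$ "derivative-assignment patterns" of products of preactivations and their derivatives, each of which is, on a suitable region, a polynomial in $(x,\theta)$ of degree bounded by a constant depending only on $k$ and $s$ (the $\mathrm{ReLU}^{k-1}$ nonlinearity has only $k-1$ "live" derivatives, so $s \le k-1$ derivatives keep everything polynomial). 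Third, apply the Warren/Milnor-type bound (as packaged in Theorem 8.3 of \cite{anthony2009neural} or Lemma 1 of \cite{bartlett2019nearly}): the number of sign patterns of $m$ such polynomials in $S$ variables, each of degree at most $\Delta = \Delta(k,s)$, partitioned over $3^s \cdot (\text{const})^{SQ}$ regions, is at most $\bigl(\text{const}\cdot m \cdot \Delta \cdot 3^s\bigr)^{O(SQ)}$. Setting this $\ge 2^m$ and solving for $m$ yields $m \lesssim SQ \log\bigl(3^s \Delta S\bigr) \lesssim s\cdot 3^s \cdot SQ \log S$ after absorbing constants; the same argument with thresholds $b_i$ gives the pseudo-dimension bound. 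Fourth, I would note that the VC dimension of $\mathrm{sgn}(D^\alpha \mathcal{F})$ is dominated by the pseudo-dimension, so a single estimate suffices.

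The main obstacle, and the place where genuine work beyond \cite{bartlett2019nearly,yang2023nearly} is required, is controlling the effect of differentiation on the piecewise-polynomial partition — specifically, proving that $D^\alpha u$ is still piecewise polynomial with a region count that is only $3^s$ times larger (rather than, say, $Q^s$ or worse), and that the polynomial degrees inflate only additively. This requires a careful layer-by-layer induction: at layer $\ell$ one tracks not just the preactivation but the tuple of its derivatives up to order $s$, shows each is a polynomial of degree $\le (\text{const})^\ell$... wait — one must be careful here, since naive degree tracking gives exponential-in-$Q$ degree, which is fine because it only enters the bound through $\log(\Delta) = O(Q)$, and $SQ \cdot Q$ would be too large; the resolution (as in \cite{bartlett2019nearly}) is to refine the partition layer by layer so that within each piece each layer contributes bounded degree, at the cost of the $(\text{const})^{SQ}$ region count, and then the derivative induction must be run compatibly with this layered refinement. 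Making the derivative bookkeeping interact correctly with the layered partition refinement, so that the final exponent is $SQ\log S$ and not $SQ^2$ or $S^2Q$, is the delicate part; the factor $s\cdot 3^s$ is then the honest price of carrying $s$-th order derivative information through $Q$ compositions.
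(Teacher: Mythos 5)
Your counting framework is the same as the paper's---reduce everything to sign/threshold patterns of polynomials in the parameter vector over a Bartlett-style layer-by-layer partition of parameter space, then apply the Warren-type bound of \autoref{lemma: partition}---but you differ in the key intermediate step. The paper first proves a representation lemma (\autoref{lemma: reproduce drivatives}): every $D^{\alpha}u$ with $|\alpha|\le s$ is \emph{exactly realized} by a surrogate network with mixed activations $\mathrm{ReLU}^{t}$, $t=k-s-1,\dots,k-1$, of depth $Q+2$, width $(Q+2)^{s}G$ and $3^{s}S$ nonzero parameters (the chain-rule products are implemented with the multiplication network of \autoref{lemma: multiply}); the factors $s\cdot 3^{s}$ in the statement are precisely the price of this surrogate, and the counting argument is then run on the surrogate class. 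You instead differentiate in place and track the piecewise-polynomial structure of $D^{\alpha}u(x;\theta)$ directly. That route is viable, and in fact can be made cleaner and sharper than you realize: the cells of the layered partition are determined by the signs of the preactivations at the fixed sample points, and this data is unchanged by differentiating in $x$; on each cell every term produced by Fa\`a di Bruno is a polynomial in $\theta$, because $\sigma^{(t)}(z)=c_{t}\,z_{+}^{k-1-t}$ becomes polynomial once the sign of the preactivation is frozen. Hence no refinement of the partition is needed at all, and one obtains a bound of order $SQ\log S$ (plus degree-dependent terms, see below) with no $3^{s}$ factor, which implies the stated theorem a fortiori.

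Several of your quantitative sub-claims are wrong as stated, though, and need repair (the repairs simplify the argument). First, the ``at most $3^{|\alpha|}$ derivative-assignment patterns, hence a $3^{s}$ blow-up of the number of polynomial pieces'' is not a correct mechanism: the number of chain-rule terms across $Q$ layers is not bounded by $3^{s}$ independently of $Q$, and in any case the number of terms is irrelevant for the counting (a sum of polynomials on a cell is a polynomial); as explained above, the piece count does not grow at all, and even if it were multiplied by $3^{s}$ this would only contribute an additive $s$ inside the logarithm, not the multiplicative $s\cdot3^{s}$ you write down---in the paper that factor arises from the surrogate's parameter count, so you cannot import it this way. Second, the degree of $D^{\alpha}u$ in $\theta$ on a cell is not ``a constant depending only on $k$ and $s$''; it grows like $k^{Q}$, and the layer-by-layer refinement does not reduce the per-cell degree---it only controls the number of cells. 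The honest output of \autoref{lemma: partition} therefore carries an additional term of order $SQ^{2}\log k$ alongside $SQ\log S$, exactly as in Bartlett et al.\ for piecewise-polynomial activations of degree at least two; the paper's final simplification absorbs this too, which is harmless only because the networks used here have constant depth, and you should either keep that term or state the constant-depth regime explicitly. Finally, your treatment of the sparsity pattern (the union over supports of the $S$ nonzero weights) is asserted rather than argued; fix the architecture first, or add a short union bound over supports, before invoking the shattering count.
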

The proof strategy is inspired by~\cite[Theorem 6]{bartlett2019nearly} and~\cite[Theorem 4.13]{jiao2022rate}. The central step involves reconstructing each function in the derivative class $D^{\alpha}\mathcal{F}$ with a new, surrogate network. This network employs a combination of activation functions, namely $\mathrm{ReLU}^{t}$ for $t=s, s+1, \ldots, k-1$. This key constructive argument is rigorously presented in~\autoref{lemma: reproduce drivatives}. Consequently, the problem reduces to estimating the complexity of this new function class.

We now present our main approximation lower bound.

\begin{theorem}\label{theorem: lower bounds}
Let $\epsilon \in (0,1)$. To achieve a uniform error of $\epsilon$ under the $s$-th order Sobolev norm for all functions $f \in \mathcal{W}^k_\infty(\mathcal{M}^d)$ (as defined in \eqref{equation: sobolev definition1}), any $\mathrm{ReLU}^{k-1}$ network architecture $\mathcal{F}(Q,G,S,\infty,\infty)$ must employ a number of nonzero parameters, $S$, that satisfies
 \begin{equation*}
SQ\log S \gtrsim \epsilon^{-\frac{d}{k-s}}.
\end{equation*}
\end{theorem}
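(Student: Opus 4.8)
The plan is to combine a classical volume/packing argument for Sobolev balls with the complexity bound of \autoref{theorem: VC-dimension} via a standard information-theoretic (fat-shattering) lower bound. The starting point is the observation that an approximation guarantee in the $\mathcal{W}^s_\infty$-norm automatically yields an approximation guarantee for the derivative class $D^\alpha\mathcal{F}$ with $|\alpha|=s$: if every $f\in\mathcal{W}^k_\infty(\mathcal{M}^d)$ is approximated to within $\epsilon$ in $\mathcal{W}^s_\infty$ by some $g\in\mathcal{F}(Q,G,S,\infty,\infty)$, then for a fixed chart $(U_j,\psi_j)$ and fixed cutoff $\rho_j$, the functions $D^\alpha\big((\rho_j\, g)\circ\psi_j^{-1}\big)$ uniformly approximate $D^\alpha\big((\rho_j\, f)\circ\psi_j^{-1}\big)$ on $\psi_j(U_j)\subset\mathbb{R}^d$ to within $\epsilon$. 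Since multiplication by the fixed smooth cutoff $\rho_j$ and composition with the fixed smooth diffeomorphism $\psi_j^{-1}$ only change the VC/pseudo-dimension by constant factors (it is a fixed linear-plus-smooth post/pre-composition), the resulting class $\widetilde{\mathcal{F}}_j:=\{D^\alpha((\rho_j g)\circ\psi_j^{-1}):g\in\mathcal{F}\}$ has $\mathrm{PDim}(\widetilde{\mathcal{F}}_j)\lesssim s\cdot 3^s\cdot SQ\log S$ by \autoref{theorem: VC-dimension}.

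Next I would exhibit a large family of test functions living in a single chart. Using a standard bump-function construction: pick a fixed smooth $\phi$ supported in the unit cube, rescale to a grid of $N^d$ disjoint bumps of side $1/N$ inside $\psi_j(U_j)$, and form $f_\theta=\sum_{i} \theta_i\, \delta^k\, \phi(N(\cdot-x_i))$ with $\theta\in\{0,1\}^{N^d}$ and $\delta\asymp 1/N$ a scaling chosen so that $\|f_\theta\|_{\mathcal{W}^k_\infty(\mathcal{M}^d)}\le 1$; this forces the normalization $\delta^k N^k\asymp 1$, i.e. we may simply take $\delta^k=N^{-k}$ up to constants. Pulling back to the manifold (the $f_\theta$ are supported in $U_j$ so $\rho_j f_\theta$ essentially equals $f_\theta$ near the bump centers), the $s$-th derivatives $D^\alpha((\rho_j f_\theta)\circ\psi_j^{-1})$ differ by $\asymp \delta^k N^s=N^{s-k}$ at the points $\psi_j^{-1}(x_i)$ between the $\theta_i=0$ and $\theta_i=1$ cases. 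Hence an $\epsilon$-approximation in $\mathcal{W}^s_\infty$ of the whole family $\{f_\theta\}$ forces the surrogate derivative class $\widetilde{\mathcal{F}}_j$ to \emph{pseudo-shatter} the $N^d$ grid points with margin $\asymp N^{s-k}$: choosing thresholds at the midpoints, for each $\theta$ the approximant of $f_\theta$ realizes the corresponding dichotomy, provided $\epsilon\lesssim N^{s-k}$. By the definition of pseudo-dimension this yields $\mathrm{PDim}(\widetilde{\mathcal{F}}_j)\ge N^d$.

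Combining the two bounds, choosing $N$ as the largest integer with $c\,N^{s-k}\ge\epsilon$, i.e. $N\asymp \epsilon^{-1/(k-s)}$, gives $N^d\asymp\epsilon^{-d/(k-s)}\le \mathrm{PDim}(\widetilde{\mathcal{F}}_j)\lesssim s\cdot 3^s\cdot SQ\log S$, and since $s<k$ and $k$ is fixed the factor $s\,3^s$ is an absolute constant, yielding $SQ\log S\gtrsim\epsilon^{-d/(k-s)}$ as claimed. I expect the main obstacle to be the bookkeeping in the second step: one must verify that the bump family genuinely lies in the unit ball of the intrinsically defined Sobolev norm $\|\cdot\|_{\mathcal{W}^k_\infty,1}$ from \eqref{equation: sobolev definition1} — this requires controlling $\|(\rho_{j'} f_\theta)\circ\psi_{j'}^{-1}\|_{W^k_\infty(\mathbb{R}^d)}$ over \emph{all} overlapping charts $j'$, not just the chart $j$ where the bumps live, using that the $\rho_{j'}$ and transition maps have derivatives bounded by manifold-dependent constants (absorbed into $C$) — and that the pull-back of a single-chart pseudo-shattering configuration survives composition with $\psi_j^{-1}$ and multiplication by $\rho_j$ without the margin collapsing, which is where one uses that $\rho_j\equiv 1$ on a neighborhood of the bump support (shrinking $U_j$ slightly if necessary via \autoref{proposition: local charts}). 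The VC/pseudo-dimension stability under fixed smooth reparametrization is routine but should be stated carefully, perhaps as an auxiliary lemma in the appendix.
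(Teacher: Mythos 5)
Your proposal follows essentially the same route as the paper's proof: a family of $\asymp N^{d}$ localized bumps scaled by $N^{-k}$ and indexed by binary vectors, whose $s$-th derivatives at the bump centers separate by $\asymp N^{-(k-s)}$, so that an $\epsilon\asymp N^{-(k-s)}$ approximation in $\mathcal{W}^{s}_{\infty}$ forces the network-derivative class to pseudo-shatter the centers, which is then played off against the pseudo-dimension bound of \autoref{theorem: VC-dimension} to give $SQ\log S\gtrsim \epsilon^{-d/(k-s)}$. The one caveat is that your claim that fixed multiplication by $\rho_j$ and pre-composition with $\psi_j^{-1}$ change the pseudo-dimension "only by constant factors" is not a generic stability fact, since $D^{\alpha}\bigl((\rho_j g)\circ\psi_j^{-1}\bigr)$ is a point-dependent linear combination of several derivative classes $D^{\gamma}\mathcal{F}$, $|\gamma|\le s$; it must be justified by re-running the sign-pattern counting behind \autoref{theorem: VC-dimension} (fixed coefficients leave the piecewise-polynomial structure in the parameters intact), which is exactly the brief chart-to-ambient step the paper itself takes.
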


The complete proofs for~\autoref{theorem: VC-dimension} and~\autoref{theorem: lower bounds} are provided in~\autoref{Subsection: Lower Bounds}.
\begin{remark}
Notably, this is the first result to establish a lower bound for the simultaneous approximation rates of neural networks in the context of manifolds. Previous lower bounds for simultaneous approximation were limited to first-order derivatives in the Euclidean setting~\cite{yang2023nearly}. This lower bound demonstrates that the approximation rates achieved by constant-depth $\mathrm{ReLU}^{k-1}$ networks are nearly optimal, as our upper and lower bounds on the required number of nonzero parameters match up to a logarithmic factor. Crucially, the bound depends only on the intrinsic dimension $d$, revealing that approximation on manifolds is governed by the same intrinsic complexity as in $\mathbb{R}^d$. Our lower bound is established in the $L_\infty$ norm, which is often of greater interest in computational settings since it provides pointwise control of the approximation error. While recent works~\cite{achour2022general,siegel2023optimal} have provided lower bounds in $L_p$ norms ($1\leq p < \infty$), their methods are often specific to the $L_p$ setting and Euclidean geometry. A notable example is the proof in~\cite{siegel2023optimal}, which relies on decomposing a cube into smaller, translated copies—a construction that does not generally hold for partitions on manifolds. Consequently, extending our analysis to $L_p$ Sobolev spaces on manifolds remains an important open problem and a promising direction for future research.
\end{remark}

\section{Experiments}\label{Experiments}
In this section, we conduct numerical experiments to validate our theoretical findings and provide architectural design insights. The experiments are implemented using Python scripts powered by the PyTorch framework \href{https://pytorch.org/}{https://pytorch.org/}. The scripts can be downloaded from \href{https://github.com/hanfei27/ReLUkNNonManifolds}{https://github.com/hanfei27/ReLUkNNonManifolds}.
\subsection{Numerical Experiment Setup}

Denote the unit sphere by $\mathbb S^2=\{x\in\mathbb R^3:\|x\|=1\}.$ On \(\mathbb S^2\) the target function is the standardly normalized third-order zonal spherical harmonic
\[
f(x)=Y_{3,1}(x),\qquad x\in\mathbb S^2,
\]
which satisfies the spectral relation \(\Delta_{\mathbb S^2}Y_{3,1}=-\lambda_3 Y_{3,1}\) with \(\lambda_3=12\). Definitions and the relevant spectral properties of the Laplace--Beltrami operator \(\Delta_{\mathbb S^2}\) and of spherical harmonics are collected in \autoref{appendix: LB and hd}.

We generate a weighted sample set \(\{(x_i,w_i)\}_{i=1}^N\) on \(\mathbb S^2\) using a Fibonacci grid~\cite{hannay2004fibonacci} and approximate surface integrals by the discrete quadrature
\[
\int_{\mathbb S^2} g(x)\,\mathrm d\omega(x)\approx\sum_{i=1}^N w_i\,g(x_i).
\]

The model is a fully connected \(\mathrm{ReLU}^k\) network \(u_\theta:\mathbb R^3\to\mathbb R\), evaluated at the sampled points. During training we optimise only the \(L^2\) (function value) discrepancy; explicitly the training objective on the sphere is
\[
\mathcal L_{\mathbb S^2}^{\mathrm{train}}(\theta)
=\sum_{i=1}^N w_i\big|u_\theta(x_i)-f(x_i)\big|^2.
\]
Training uses the Adam optimizer~\cite{kingma2015adam} with initial learning rate \(\eta=10^{-3}\), batch size \(\mathrm{BS}=2048\), and runs for \(T\) iterations. Optimization is performed by stochastic minibatch updates over the weighted samples and the best parameters found during training are saved for evaluation.

For evaluation we measure Sobolev-style errors that probe first- and second-order accuracy, even though these terms are not included in the training objective. Concretely we compute the weighted mean squared errors
\begin{align*}
\mathrm{WMSE}_{f,\mathbb S^2}
&=\sum_{i=1}^N w_i\big|u_\theta(x_i)-f(x_i)\big|^2,\\
\mathrm{WMSE}_{\nabla,\mathbb S^2}
&=\sum_{i=1}^N w_i\big\|\nabla_{\mathbb S^2} u_\theta(x_i)-\nabla_{\mathbb S^2} f(x_i)\big\|^2,\\
\mathrm{WMSE}_{\Delta,\mathbb S^2}
&=\sum_{i=1}^N w_i\big|\Delta_{\mathbb S^2}u_\theta(x_i)-\Delta_{\mathbb S^2}f(x_i)\big|^2.
\end{align*}
The tangential gradients and the Laplace--Beltrami operator used in these test-time metrics are computed numerically as described in \autoref{appendix: LB and hd}. Spectrally, the gradient and Laplace metrics correspond to multiplying spherical-harmonic coefficients by \(\lambda_n\) and \(\lambda_n^2\), respectively, so these weighted MSEs probe approximation quality across frequency bands and provide a Sobolev-style assessment of the learned model.

We repeat the validation on an embedded torus \(\mathbb T^2_{R,r}\subset\mathbb R^3\) parameterized by
\[
X_{R,r}(u,v)=\big((R+r\cos v)\cos u,\; (R+r\cos v)\sin u,\; r\sin v\big),\qquad u,v\in[0,2\pi),
\]
with \(R>r>0\) (in experiments we commonly take \(R=1.5,\; r=0.5\)). The torus target is taken from an analytic Fourier family, for example
\[
g(x)=A\cos(m\,u(x))+B\sin(n\,v(x)),
\]
where \(u(x),v(x)\) are the angular coordinates recovered from \(x\). The Laplace--Beltrami action on this family admits a closed-form expression (see \autoref{appendix: LB and hd}), hence \(\Delta_{\mathbb T^2}g\) is available exactly for testing.

Weighted torus samples \(\{(y_j,\tilde w_j)\}_{j=1}^M\) are drawn by sampling \((u_j,v_j)\) uniformly on \([0,2\pi)^2\), mapping via \(X_{R,r}\), and rescaling weights by the Jacobian \(\tilde w_j\propto r(R+r\cos v_j)\) so that
\[
\int_{\mathbb T^2_{R,r}} h\,\mathrm d\tilde\omega(y)\approx\sum_{j=1}^M \tilde w_j\,h(y_j).
\]
On the torus we train with the \(L^2\) objective
\[
\mathcal L_{\mathbb T^2}^{\mathrm{train}}(\theta)=\sum_{j=1}^M \tilde w_j\big|u_\theta(y_j)-g(y_j)\big|^2,
\]
using the same Adam schedule as on the sphere. Performance is reported via \(\mathrm{WMSE}_{g,\mathbb T^2}\), \(\mathrm{WMSE}_{\nabla,\mathbb T^2}\) and \(\mathrm{WMSE}_{\Delta,\mathbb T^2}\); under the toroidal Fourier basis these correspond spectrally to multiplying coefficients by \(1\), \(\mu_{m,n}\) and \(\mu_{m,n}^2\), respectively, and thus quantify accuracy across the doubly periodic frequency bands consistent with the Sobolev norm on \(\mathbb T^2_{R,r}\).
\subsection{Network Architectural Ablation Study}
We evaluated the effect of activation order by sweeping
\(\mathrm{act\_k\_list}=\{1,2,3,4,5,6,7\}\) (elementwise activation \(\sigma_k(z)=(\mathrm{ReLU}(z))^k\); \(k=1\) is standard ReLU). Experiments were performed on both the unit sphere \(\mathbb S^2\) and the embedded torus \(\mathbb T^2_{R,r}\); all runs used the identical baseline configuration given in the main text. Each \(k\) was repeated 5 times on each manifold and~\autoref{tab:actk_final_loss} below reports the final training loss (mean \(\pm\) std, 5 repeats) for both manifolds. 
\begin{table}[!ht]
\centering
\caption{Effect of activation order \(k\) on final training loss (mean \(\pm\) std, 5 repeats).}
\label{tab:actk_final_loss}
\begin{tabular}{c c c}
\toprule
$k$ & training loss (sphere) & training loss (torus) \\
\midrule
1 & $\mathtt{9.42e-05}\pm\mathtt{2.70e-05}$ & $\mathtt{3.22e-05}\pm\mathtt{4.85e-06}$ \\
2 & $\mathtt{4.19e-05}\pm\mathtt{3.99e-05}$ & $\mathtt{1.48e-05}\pm\mathtt{6.96e-06}$ \\
3 & $\mathtt{5.23e-05}\pm\mathtt{1.85e-05}$ & $\mathtt{1.64e-05}\pm\mathtt{1.52e-05}$ \\
4 & $\mathtt{9.75e-06}\pm\mathtt{2.11e-06}$ & $\mathtt{6.75e-06}\pm\mathtt{4.78e-06}$ \\
5 & $\mathtt{8.45e-06}\pm\mathtt{2.65e-06}$ & $\mathtt{3.35e-06}\pm\mathtt{2.02e-06}$ \\
6 & $\mathtt{4.36e-06}\pm\mathtt{1.42e-06}$ & $\mathtt{4.40e-06}\pm\mathtt{3.75e-06}$ \\
7 & $\mathtt{2.82e-06}\pm\mathtt{7.09e-07}$ & $\mathtt{3.68e-06}\pm\mathtt{1.22e-06}$ \\
\bottomrule
\end{tabular}
\end{table}

Additionally, we plot the function value, gradient, and Laplace--Beltrami loss components as functions of the activation order \(k\). Solid lines indicate the sample mean across five repeats and shaded bands denote \(\pm1\sigma\). Results for both manifolds are shown: the unit sphere \(\mathbb S^2\) and the embedded torus \(\mathbb T^2_{R,r}\).
\begin{figure}[!ht]
    \centering
    \includegraphics[width=\linewidth]{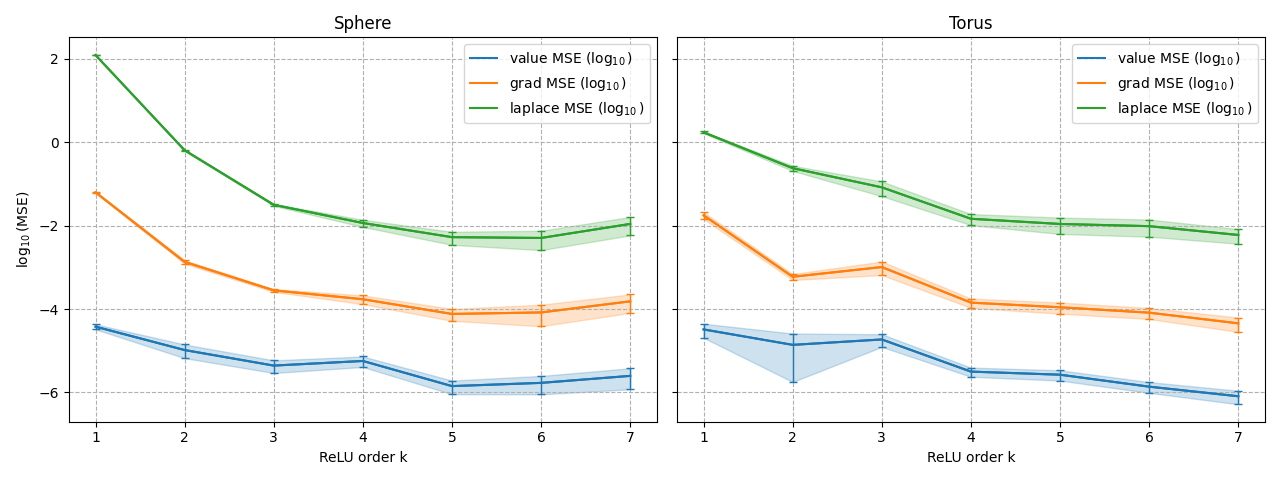}
    \caption{Component-wise mean MSEs (log$_{10}$ scale) versus ReLU activation order $k$ on two manifolds.  Left: unit sphere $\mathbb S^2$.  Right: embedded torus $\mathbb T^2_{R,r}$.  Plotted are function value MSE (blue), tangential gradient MSE (orange) and Laplace--Beltrami MSE (green).  Curves denote the sample mean over five independent runs; shaded bands indicate $\pm1\sigma$ and markers/error bars show the sample standard deviation.}
    \label{fig:actk_components}
\end{figure}
In both the sphere and torus experiments we observe that the component MSEs (function value, tangential gradient and Laplace–Beltrami) decrease as the activation order $k$ increases (plots are shown on a log scale). This behavior aligns with two complementary theoretical points. First, standard $\mathrm{ReLU}$ networks ($k$=1) are consistent with the simultaneous-approximation results of~\cite{guhring2020error}: they can achieve control of the function and its first derivatives, i.e. first-order simultaneous approximation. Second, as we show in our analysis, ReLU networks do not, in general, provide simultaneous approximation beyond first order and therefore cannot guarantee uniform control of second-order quantities such as the Laplace–Beltrami. Increasing the activation order mitigates this limitation because higher-order activations are smoother at the kink and thus enable the network to form higher-degree piecewise-polynomial, spline-like approximants. This additional smoothness enables network exploit higher-order information of the target and assemble higher-degree local polynomial pieces, which improves approximation in Sobolev norms that are sensitive to second derivatives; accordingly, the Laplace–Beltrami MSE decreases as $k$ increases in our experiments. To illustrate this effect,~\autoref{fig:spatial_errors} shows pointwise spatial error fields (function value error, tangential gradient norm error, and absolute Laplace error) for \(k=1\) and \(k=5\) on both manifolds, highlighting the smoothing effect and local reductions in gradient / Laplace errors produced by the higher-order activation.
\begin{figure}[!ht]
  \centering
  \includegraphics[height=9cm,width=0.8\textwidth,keepaspectratio=false]{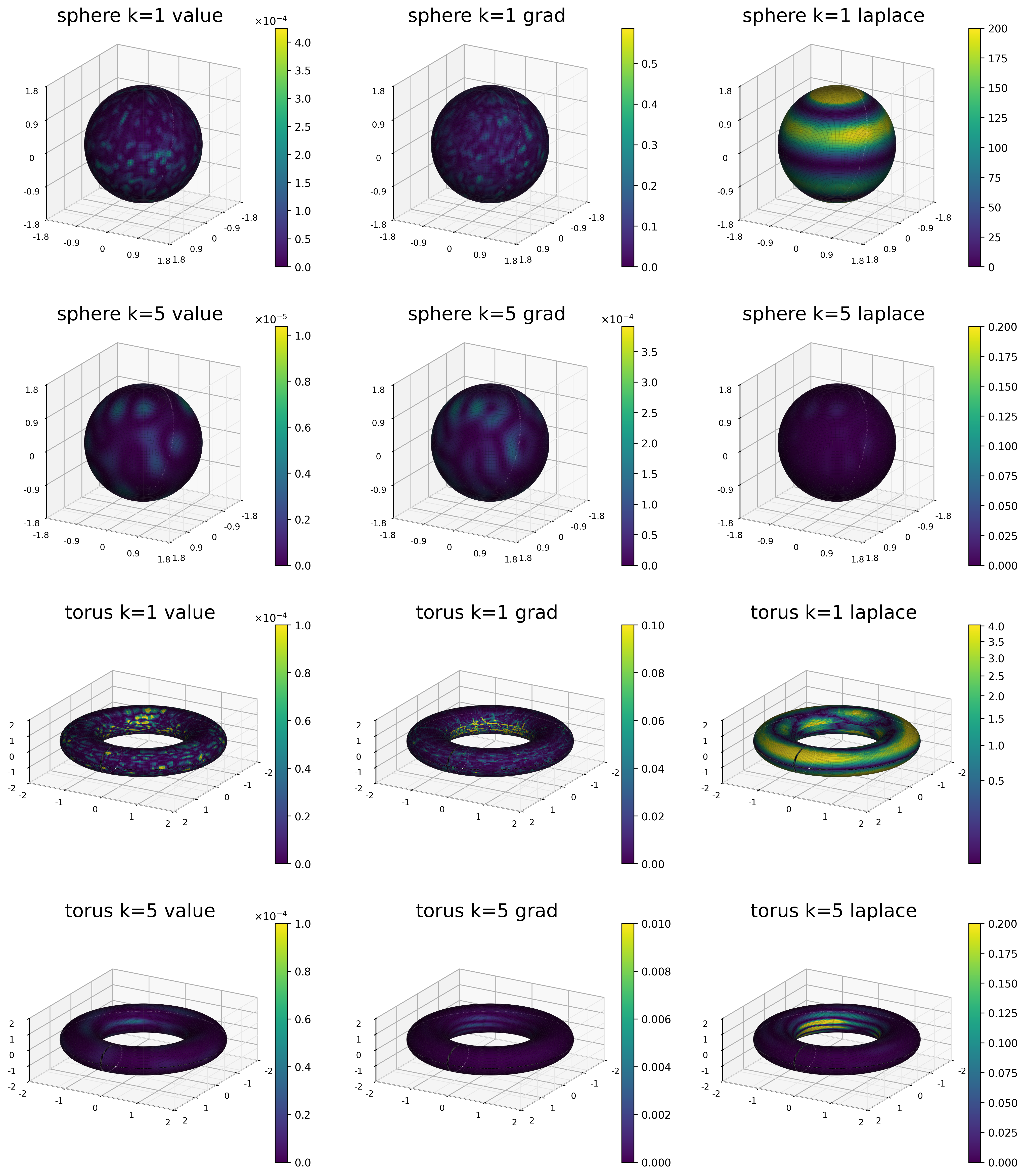}
  \caption{Spatial error maps on the sphere and torus.}
  \label{fig:spatial_errors}
\end{figure}

Having characterized the effect of activation order \(k\) and observed diminishing returns for large \(k\), we henceforth fix \(k=4\) and examine network width \(m\in\{16,32,64,96,128,256\}\) (depth \(L=2\)), holding all other hyperparameters constant. The network depth was fixed at 2 so that width is the only variable. All runs used the same baseline configuration: total training steps \(T=5000\), total samples \(N=20000\), mini-batch size \(\mathrm{BS}=2048\), learning rate \(\eta=10^{-3}\), and random seed fixed for repeatability. Each width was trained independently for 5 repeats; reported statistics are the sample mean and sample standard deviation across repeats. The table below lists the three component MSEs (function value, gradient, Laplace--Beltrami).
\begin{table}[!ht]
\centering
\caption{Final training loss (mean $\pm$ std, 5 repeats) for different network widths.}
\label{tab:width_final_loss}
\begin{tabular}{c c c}
\toprule
width $m$ & training loss (sphere) & training loss (torus) \\
\midrule
16  & $\mathtt{1.07e-03}\pm\mathtt{1.66e-03}$ & $\mathtt{1.01e-03}\pm\mathtt{2.14e-04}$ \\
32  & $\mathtt{5.69e-05}\pm\mathtt{2.47e-05}$ & $\mathtt{2.42e-04}\pm\mathtt{5.26e-05}$ \\
64  & $\mathtt{2.12e-05}\pm\mathtt{1.54e-05}$ & $\mathtt{8.92e-05}\pm\mathtt{2.39e-05}$ \\
96  & $\mathtt{1.80e-05}\pm\mathtt{1.26e-05}$ & $\mathtt{7.09e-05}\pm\mathtt{1.63e-05}$ \\
128 & $\mathtt{5.18e-06}\pm\mathtt{2.42e-06}$ & $\mathtt{5.46e-05}\pm\mathtt{1.22e-05}$ \\
256 & $\mathtt{1.81e-06}\pm\mathtt{3.80e-07}$ & $\mathtt{4.47e-05}\pm\mathtt{1.07e-05}$ \\
\bottomrule
\end{tabular}
\end{table}

We also plot the component-wise MSEs in two log–log diagrams that both display MSE versus network width: one subplot for the sphere and one for the torus (horizontal axis: \(\log_2(\text{width})\)).  Each curve shows the sample mean across five repeats with shaded bands indicating \(\pm1\sigma\).

\begin{figure}[!ht]
    \centering
    \includegraphics[width=\linewidth]{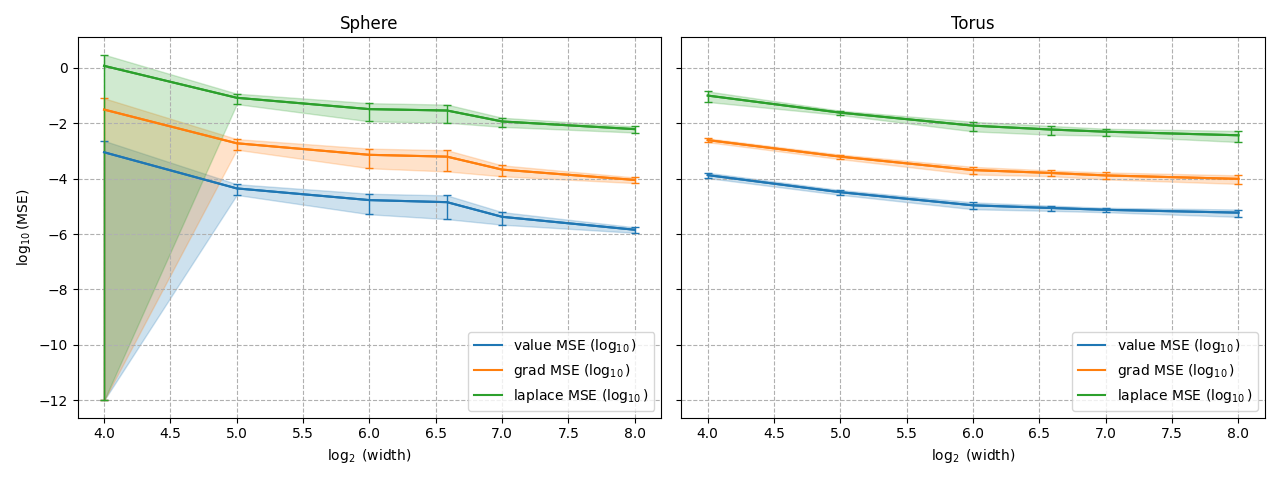}
    \caption{Component-wise mean MSEs versus network width (log--log). Left: unit sphere \(\mathbb S^2\). Right: embedded torus \(\mathbb T^2_{R,r}\). Plotted are function value MSE (blue), tangential gradient MSE (orange) and Laplace--Beltrami MSE (green). Solid lines denote the sample mean over five independent runs; shaded bands indicate \(\pm1\sigma\). The horizontal axis is shown as \(\log_2(\text{width})\) and the vertical axis as \(\log_{10}(\text{MSE})\).}
    \label{fig:width_component_losses}
\end{figure}
Both plots (sphere and torus) display an approximately linear decline on log–log axes, so we fit simple linear models in the logarithmic domain to estimate power-law decay exponents for each error component. The fitted lines indicate algebraic (power-law) decay of the error with model size (width / parameter count) in both cases, which qualitatively supports the mechanism predicted by our theory. Quantitatively, however, the fitted slopes do not match the theoretical worst-case exponents: the theory provides conservative bounds valid in the worst case, whereas our numerical targets are much smoother than worst-case examples — the sphere experiments use a spherical harmonic and the torus experiments use Fourier-type targets, both of which possess strong regularity. Consequently exact agreement of exponents is not expected, although the observed algebraic decay corroborates the theory’s qualitative prediction that larger models yield polynomially improving Sobolev accuracy. The fitted log–log slopes for each error component (fit of $\log_{10}(\mathrm{MSE})$ versus $\log_{10}(\text{parameter count})$) are reported in ~\autoref{tab:slopes_by_manifold}.  The slopes quantify the observed algebraic decay rates; the sphere exhibits substantially steeper decay than the torus, reflecting the higher regularity of the spherical-harmonic target used in those runs.

\begin{table}[!ht]
  \centering
  \caption{Estimated log–log slopes $\alpha$ (fit of $\log_{10}(\mathrm{MSE})$ vs.\ $\log_{10}$ parameter count).}
  \label{tab:slopes_by_manifold}
  \begin{tabular}{l c c}
    \toprule
    Component & Sphere (log–log slope) & Torus (log–log slope) \\
    \midrule
    Value  & $-2.1708$ & $-1.1323$ \\
    Grad   & $-1.9832$ & $-1.1741$ \\
    Laplace & $-1.7896$ & $-1.2026$ \\
    \bottomrule
  \end{tabular}
\end{table}

We also studied network depth. Empirically, for activation order \(k>3\) increasing depth tends to destabilize optimization and can prevent convergence. To mitigate this for high-order \(\mathrm{ReLU}^k\) in deep nets we applied a systematic stabilization protocol: (i) a variance rescaling based on half-normal moments so that
\[
\operatorname{Var}\big(s_k\mathrm{ReLU}^{\,k}(z)\big)=\operatorname{Var}\big(\mathrm{ReLU}(z)\big),
\]
with scalar \(s_k>0\) estimated from the half-normal assumption on pre-activations; (ii) insertion of \texttt{LayerNorm} after each hidden linear layer; (iii) upper bounds on activations and weights (\(\text{activation\_clamp\_max}=3.0,\ \text{weight\_clip}=0.1\)); and (iv) an optimization schedule with reduced initial learning rate \(\eta=5\times10^{-4}\). With these measures a \(\mathrm{ReLU}^k\) network with \(k=5\) and depth \(L=5\) stably converges on the spherical-harmonic target, reducing the Laplace--Beltrami residual to \(\mathcal{O}(10^{-2})\). However, the deep model shows substantially longer training time and no clear error advantage over the shallower \(L=2\) baseline; this empirical finding aligns with our theoretical message that, absent explicit parameter-norm constraints, modest depth (e.g. \(L=3\)) already suffices to obtain the predicted approximation rates.

\subsection{Mixed Architectures Study}
We assessed whether $\mathrm{ReLU}$–$\mathrm{ReLU}^k$ mixtures can combine the optimization advantages of ReLU with the higher-order approximation power of smooth activations. Experimental settings were held fixed: network width $m=128$, depth $L=4$, training samples $N=10{,}000$, training steps $T=4{,}000$, and \texttt{LayerNorm} applied uniformly. The activation pattern was the only variable: a tail-mixed family $[1,1,1,5]$ (first three hidden layers $\mathrm{ReLU}$, final hidden layer $\mathrm{ReLU}^5$) and single high-order placements $[5,1,1,1]$, $[1,5,1,1]$, $[1,1,5,1]$ were tested. To exclude simple optimizer remedies, we ran ablations with activation clamping on and off, as well as with several gradient-clipping thresholds.

Across all configurations and optimization variants, the weighted Laplace error remained on the order of $\sim 4.4\times10^{1}$ to $\sim 1.25\times10^{2}$, comparable to the first-order error level of standard ReLU networks; the corresponding gradient error likewise stayed at $O(10^{-2})$ with no systematic improvement. These results indicate that, under the present experimental conditions, ReLU–ReLU$^k$ mixed architectures (regardless of high-order unit placement) do not induce the expected high-order ($H^2$-type) convergence. In principle, a $\mathrm{ReLU}$–$\mathrm{ReLU}^k$ mixed architecture can realize the smooth approximants required for \(H^2\)-type convergence; however, in practice the optimization process offers no guarantee that the $\mathrm{ReLU}$ components will be driven to the specific smooth configurations needed, so the theoretical feasibility does not translate into a reliable practical route to second-order convergence. Consequently, for improved numerical performance when second-order control is important, we recommend using globally smoother activations (for example, GeLU or softplus) or, as a pragmatic compromise that preserves gradient flow, LeakyReLU; these alternatives typically yield more stable optimization and better empirical reduction of Laplace-sensitive errors than architectures that mix non-smooth ReLU blocks with high-order units. However, a full exploration of these alternative activations and their theoretical properties lies beyond the scope of this paper.

\section{Discussion}\label{Section: Discussion}
Our work establishes the first result on the simultaneous approximation rates of $\mathrm{ReLU}^{k-1}$ networks for Sobolev and Hölder-Zygmund functions on manifolds, measured in higher-order norms. We demonstrate that these rates are nearly optimal with respect to the number of non-zero network parameters. Crucially, they depend only on the manifold's intrinsic dimension $d$, not the ambient dimension, thereby overcoming the curse of dimensionality. Moreover, a natural extension is to consider functions with anisotropic smoothness on manifolds, such as those in Korobov or anisotropic Besov spaces. It is well-established that in Euclidean settings, neural networks can effectively overcome the curse of dimensionality when approximating such functions~\cite{yang2023optimal,suzuki2021deep,suzuki2019adaptivity,yang2024near}. However, a key challenge is to identify concrete instances of functions possessing this anisotropic smoothness on manifolds. While some studies suggest that solutions to certain PDEs may exhibit these properties in Euclidean space~\cite{yserentant2004regularity,griebel2007sparse}, the situation on manifolds remains largely unexplored. Investigating this constitutes a promising direction for future research. 

Our results show that neural networks can exploit intrinsic geometric structure, laying a foundation for solving forward and inverse PDE problems on manifolds. Since upper bounds are typically constructive, matching lower bounds play a distinct role in confirming the sharpness of the rates. Establishing such optimality would highlight neural networks as not only powerful approximators but also reliable tools for PDE analysis and computation. A natural next step is to develop PDE solvers based on these guarantees and rigorously analyze their convergence and optimality. 

For PDE problems, the PINNs framework aligns naturally with our approximation setting, making the results directly applicable to convergence analysis on manifolds. In the presence of boundaries, stronger approximation guarantees are needed to enforce hard constraints from boundary conditions~\cite{lei2025solving,zhou2025weak}, making this an important subdirection for further analysis. More broadly, a complete theory must also account for generalization error, governed by the complexity of the hypothesis class (e.g., its VC-dimension or pseudo-dimension). By establishing new bounds on these complexity measures for derivative classes of $\mathrm{ReLU}^{k-1}$ networks, our work provides a key step toward a comprehensive convergence theory for PINNs on manifolds.

\section{Proof of Main Results}\label{Section: Proofs}
\subsection{Upper Bounds}\label{Subsection: Upper Bounds}
we first provide the proof of upper bound results. The proof is structured as follows: we first outline the key ideas and strategies employed in the proof, followed by a detailed step-by-step construction of the neural networks that achieve the desired approximation rates. The complete proofs are deferred to~\autoref{Appendix: Auxiliary Lemma and Proofs} for clarity and to maintain the flow of the main text.

\textbf{The Proof Sketch of~\autoref{theorem:main result sobolev} and~\autoref{corollary: main result Holder}.} The key idea of the proof lies in leveraging the composite structure of functions defined on the manifold, specifically the decomposition $f(x) = \sum_{i=1}^K(\rho_i f)\circ \psi_i^{-1} \circ \psi_i(x)$ over a finite atlas of $L$ charts. The approximation strategy is designed to achieve simultaneous approximation of the function and its derivatives. This proceeds in two main stages. First, we separately construct neural networks that simultaneously approximate the coordinate mappings $\{\psi_i(x)\}_{i=1}^K$ and the dimension-reduced functions $\{(\rho_i f)\circ \psi_i^{-1}\}_{i=1}^K$, along with their respective derivatives. For this local approximation, we employ a high-order spline quasi-interpolation scheme, which is then replicated by neural networks and stitched together using a network-based partition of unity. Second, by combining these networks through concatenation and parallel summation, we construct the final global approximation for $f$.

The principal challenge in this argument is that controlling the error of the final network's $s$-th derivative places stricter demands on the approximation of the component functions. Specifically, the Faà di Bruno's formula rule (see~\autoref{Subsection: Sobolev Composite Differences}) implies that to bound the error of the composite function's $s$-th derivative, we must impose accuracy requirements on the inner functions' approximations that are stronger than just controlling their $s$-th derivatives. This is where our use of spline quasi-interpolation becomes crucial. This method allows us to clearly characterize the scale and behavior of the high-order derivatives of the local approximating functions. By constructing neural networks that emulate this quasi-interpolation operator, we can explicitly manage the norms of the network's derivatives. This ensures that the errors accumulated through composition and summation remain tightly bounded, ultimately guaranteeing that the approximation error for each $s$-th order derivative is controlled as required. 

The proof is structured in three parts. First, we establish the core approximation result on the canonical domain of the unit cube $[0,1]^d$. Here, we demonstrate that a specific spline quasi-interpolation operator provides the desired approximation rates and, crucially, can be exactly represented by a $\mathrm{ReLU}^{k-1}$ network with well-defined size and parameter bounds. Second, we generalize this result from the unit cube to arbitrary bounded open domains using a Sobolev extension theorem, which allows us to apply our cube-based construction in a more general setting. Finally, we tackle the case of functions on a smooth manifold by employing a partition of unity to decompose the function into localized components, each of which can be approximated using the results for open domains. The final network is constructed by composing and summing the networks that approximate the localized functions and their corresponding chart maps.

\subsubsection{Approximation on the Unit Cube}\label{Subsubsection: Approximation on the Unit Cube}

In this subsection, we analyze the approximation rates of neural networks for the function class $W_p^k([0,1]^d)$, where $k \in \mathbb{N}_+$ and $1\leq p\leq \infty$. Our strategy is to first establish an approximation theorem for $k$-th order tensor product splines in the Sobolev norm. Subsequently, we leverage the exact representation of these splines by $\mathrm{ReLU}^{k-1}$ networks to derive the simultaneous approximation rates for neural networks on the unit cube $[0,1]^d$.

For a function $f \in W_p^k([0,1]^d)$, we investigate the approximation capability of the tensor product spline quasi-interpolation operator $J$ (defined in \eqref{equation: quasi interpolation operator}) within each cell $T_{\,j_1,\dots,j_d}$ (see~\autoref{lemma: local approximation}).

In~\autoref{Subsection: Averaged Taylor Polynomials}, we established the existence of a polynomial $P$ such that the following inequality holds:
\begin{equation*}
    |f-P|_{W_p^s(T_{\,j_1,\dots,j_d})} \leq C(k,d,\zeta) (\sqrt{d}k)^{(k-s)}N^{-(k-s)} |f|_{W_p^k(T_{\,j_1,\dots,j_d})} \quad s=0,1,\ldots,k.
\end{equation*}
From~\autoref{lemma: local approximation}(ii), we know that the operator $J$ reproduces polynomials, i.e., $JP=P$. Therefore, for any multi-index $\alpha$ with $0\leq|\alpha|\leq s$, we can use the triangle inequality:
\begin{align*}
    & \left\|D^\alpha f-D^\alpha Jf\right\|_{L_p(T_{\,j_1,\dots,j_d})} \\ 
    \leq &\left\|D^\alpha f-D^\alpha P\right\|_{L_p(T_{\,j_1,\dots,j_d})}+\left\|D^\alpha JP-D^\alpha Jf\right\|_{L_p(T_{\,j_1,\dots,j_d})}.
\end{align*}
The first term can be directly estimated as:
\begin{equation*}
    \left\|D^\alpha f-D^\alpha P\right\|_{L_p(T_{\,j_1,\dots,j_d})} \leq C(k,d,\zeta) (\sqrt{d}k)^{(k-|\alpha|)}N^{-(k-|\alpha|)} |f|_{W_p^k(T_{\,j_1,\dots,j_d})}.
\end{equation*}
For the second term, by applying ~\autoref{lemma: local approximation} and the properties of B-spline derivatives, we have:
\begin{align*}
    &\quad\left\|D^\alpha(JP-Jf)\right\|_{L_p(T_{j_1,\dots,j_d})}=  \left\|\sum_{i_1,\ldots,i_d=1}^{N+k-1} J_{i_1,\ldots,i_d}(f-P)D^\alpha B^k_{i_1,\ldots,i_d}(x)\right\|_{L_p(T_{j_1,\dots,j_d})}\\
    &\leq \sum_{i_1,\ldots,i_d=1}^{N+k-1}C(k,d)h_{i_1,\ldots,i_d}^{-\frac{1}{p}}\;\bigl\| f-P \bigr\|_{L_p(T_{j_1,\dots,j_d})} h_{j_1,\dots,j_d}^{\frac{1}{p}}\left\|D^\alpha B^k_{i_1,\ldots,i_d}(x)\right\|_{L_\infty(T_{j_1,\dots,j_d})} \\
    &\leq\sum_{i_1,\ldots,i_d=1}^{N+k-1}C(k,d)N^{-k} |f|_{W_p^k(T_{j_1,\dots,j_d})} \prod_{m=1}^{d}(2N)^{\alpha_m} \frac{(k-1)!}{(k-\alpha_m-1)!}\mathbb{I} \left(\left|i_{m}-j_{m}\right| \leq k\right).\\
    &\leq C(k,d)N^{-(k-|\alpha|)}|f|_{W_p^k(T_{j_1,\dots,j_d})}.
\end{align*} 
Combining the estimates for both terms yields:
\begin{equation*}
    \left\|D^\alpha f-D^\alpha Jf\right\|_{L_p(T_{\,j_1,\dots,j_d})} \leq \frac{C_1|f|_{W_p^k(T_{j_1,\dots,j_d})}}{N^{k-|\alpha|}},
\end{equation*}
where $C_1 = C(k,d,\zeta)\left((\sqrt{d}k)^{(k-|\alpha|)}+(2k + 1)^{2d+|\alpha|-k}9^{d(k-1)}d^{\frac{k}{2}}k^{k+\frac{d}{p}}\right)$.
By taking the $p$-th power of the above inequality and summing over all indices $j_1,\dots,j_d$ and all multi-indices $\alpha$ with $|\alpha|\leq s$, we arrive at the global error bound:
\begin{equation*}
    \|f-Jf\|_{W_p^s([0,1]^d)} \leq \frac{C_1k^{\frac{d}{p}}\binom{s+d}{s}^{\frac{1}{p}}\|f\|_{W_p^k([0,1]^d)}}{N^{k-s}},\quad k\geq s.
\end{equation*}
Furthermore, we need to establish a bound on the higher-order Sobolev norms of the approximant $Jf$, specifically its $W_\infty^m$ norm for $m\in \mathbb{N}_+$. The derivation is analogous. For a multi-index $\beta$ with $|\beta| = m$:
\begin{align*}
    &\quad\left\|D^\beta Jf\right\|_{L_\infty(T_{\,j_1,\dots,j_d})}=  \left\|\sum_{i_1,\ldots,i_d=1}^{N+k-1} J_{i_1,\ldots,i_d}(f)D^\beta B^k_{i_1,\ldots,i_d}(x)\right\|_{L_\infty(T_{\,i_1,\dots,i_d})}\\
    &\leq  \sum_{i_1,\ldots,i_d=1}^{N+k-1}C(k,d)h_{i_1,\ldots,i_d}^{-\frac{1}{p}}\bigl\| f \bigr\|_{L_p\bigl(T_{j_1,\dots,j_d}\bigr)}\left\|D^\beta B^k_{i_1,\ldots,i_d}(x)\right\|_{L_\infty(T_{j_1,\dots,j_d})} \\
    &\leq\sum_{i_1,\ldots,i_d=1}^{N+k-1}C(k,d)N^{\frac{d}{p}}\bigl\| f \bigr\|_{L_p\bigl(T_{j_1,\dots,j_d}\bigr)} \cdot \prod_{m=1}^{d}(2N)^{\beta_m} \frac{(k-1)!}{(k-\beta_m-1)!}\mathbb{I} \left(\left|i_{m}-j_{m}\right| \leq k\right).
\end{align*}
This implies the upper bound:
\begin{equation}\label{equation: derivative bound}
    \|Jf\|_{W^m_\infty([0,1]^d)} \leq C N^{m+\frac{d}{p}}\|f\|_{W_p^k([0,1]^d)}.
\end{equation}
We now proceed to the network construction. We will show that a $\mathrm{ReLU}^{k-1}$ network can exactly represent the spline approximant $Jf$ and, in doing so, precisely characterize how the network size scales with the approximation error. This ultimately yields the simultaneous approximation rates for Sobolev functions on the unit cube. Our construction begins with the exact representation of univariate splines and then generalizes to $d$-dimensional tensor product splines via a product network architecture. Throughout this process, we pay special attention to maintaining control over the infinity norm of the network parameters, i.e., ensuring
\begin{equation*}
\sup_{l=0,1,\ldots,L} \|W_l\|_\infty \lor \sup_{l=1,\ldots,L} \|\mathbf{v}_l\|_\infty \leq 1.
\end{equation*}
The starting point of our construction is the explicit representation of B-spline functions. According to~\cite[Theorem 4.14 and (4.49)]{schumaker2007spline}, the B-splines $B_i^k(x)$ can be expressed as:
\begin{equation}\label{equation: interpolation spline N}
          B^k_i(x) = 
          \left\{
          \begin{aligned}
              &\sum_{j=1}^{k-i+1} \alpha_{ij}(x)^{k-j}_+ + \sum_{j=1}^{i}\beta_{ij}(x-t_{k+j})^{k-1}_+,
              \quad  1 \leq i \leq k-1, \\
              &\frac{N^{k-1}}{(k-1)!}\sum_{j=0}^k(-1)^j\binom{k}{j}\bigg((x-t_i)-\frac{j}{N}\bigg)^{k-1}_+,  
              \quad  k \leq i \leq N, \\
              &\sum_{j=1}^{2N+k-i} \gamma_{ij}(x-t_{i + j -1})^{k-1}_+,
              \quad  N+1 \leq i \leq N+k-1,
          \end{aligned}
          \right.
\end{equation}
where the coefficients satisfy $\max\{|\alpha_{ij}|,|\beta_{ij}|,|\gamma_{ij}|\} \leq N^{k-1}$. Crucially, the truncated power function $(x)_+^{k-1}$ is equivalent to the $\mathrm{ReLU}^{k-1}$ function, which forms the basis of our construction. The following lemma establishes the exact representation of low-degree polynomials using $\mathrm{ReLU}^{k-1}$ functions, with special attention paid to controlling the magnitude of the coefficients.

\begin{lemma}\label{lemma: Decomposition of x^s to ReLU^k}
Let $x\ge0$ and $l,k\in\mathbb{N}^+$ with $l<k$. Let $\{b_{l,i}\}_{i=0}^k$ be an equispaced partition of an interval of length~2, satisfying $\max_{0\le i\le k}|b_{l,i}|\le M$. Then there exist coefficients $a_{l,0},\dots,a_{l,k}$ such that
\begin{equation*}
  x^l = \sum_{i=0}^k a_{l,i}(x + b_{l,i})^k,\quad \forall\,x\ge0,
\end{equation*}
and moreover $  |a_{l,i}| \le M(l,k),i=0,1,\dots,k,$ where $M(l,k)
  = \frac{(k+1)k^k\binom{k}{\lceil k/2\rceil}(M^{k+1}\vee1)}{2^k\lceil k/2\rceil!^{2}\binom{k}{l}}.$
\end{lemma}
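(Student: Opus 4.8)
The plan is to reduce the identity to an invertible Vandermonde-type linear system, read off its solution through Lagrange interpolation, and then bound the resulting coefficients by elementary estimates. Write $h=2/k$ for the grid spacing, so that $b_{l,i}=b_{l,0}+ih$ for $i=0,\dots,k$ and the $k+1$ nodes $b_{l,0},\dots,b_{l,k}$ are pairwise distinct. Expanding each $(x+b_{l,i})^k$ by the binomial theorem and matching the coefficient of $x^m$ for $m=0,1,\dots,k$, the desired identity $x^l=\sum_{i=0}^k a_{l,i}(x+b_{l,i})^k$---which is an identity of polynomials, hence valid for all real $x$ and a fortiori for $x\ge 0$---is equivalent to the linear system
\[
\sum_{i=0}^k a_{l,i}\,b_{l,i}^{\,r}=\frac{1}{\binom{k}{l}}\,\mathbb{I}(r=k-l),\qquad r=0,1,\dots,k.
\]
The coefficient matrix of this system is the transpose of the Vandermonde matrix of the distinct nodes $b_{l,i}$, hence nonsingular, which already yields existence (and uniqueness) of the coefficients $a_{l,0},\dots,a_{l,k}$.

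To get an explicit handle on the solution I would invoke the Lagrange interpolation identity $t^{\,r}=\sum_{i=0}^k b_{l,i}^{\,r}\,\ell_i(t)$, valid for $0\le r\le k$, with $\ell_i(t)=\prod_{j\ne i}\frac{t-b_{l,j}}{b_{l,i}-b_{l,j}}$. Comparing the coefficient of $t^{\,k-l}$ on both sides gives $\sum_{i=0}^k b_{l,i}^{\,r}\,c_i=\mathbb{I}(r=k-l)$, where $c_i$ denotes the coefficient of $t^{\,k-l}$ in $\ell_i$; hence $a_{l,i}=\binom{k}{l}^{-1}c_i$ solves the system above. It then remains to estimate $|c_i|$. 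The denominator of $\ell_i$ is a product over an arithmetic progression, $\prod_{j\ne i}(b_{l,i}-b_{l,j})=h^k(-1)^{k-i}i!\,(k-i)!$, while the numerator $\prod_{j\ne i}(t-b_{l,j})$ is monic of degree $k$ with coefficient of $t^{\,k-l}$ equal, up to sign, to the elementary symmetric polynomial $e_l$ of the $k$ numbers $\{b_{l,j}\}_{j\ne i}$---a sum of $\binom{k}{l}$ products of $l$ factors, each of modulus at most $M$. Consequently
\[
|a_{l,i}|\le\frac{1}{\binom{k}{l}}\cdot\frac{\binom{k}{l}\,M^l}{h^k\,i!\,(k-i)!}=\frac{k^k M^l}{2^k\,i!\,(k-i)!}\le\frac{k^k M^l}{2^k}\cdot\frac{\binom{k}{\lceil k/2\rceil}}{k!},
\]
where the last inequality uses $\min_{0\le i\le k} i!\,(k-i)!=k!/\binom{k}{\lceil k/2\rceil}$ (the central binomial coefficient being the largest).

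To recast this in the stated form $M(l,k)$, I would use two elementary observations. First, since $\{b_{l,i}\}_{i=0}^k$ spans an interval of length $2$, its endpoints $b_{l,0},b_{l,k}$ obey $\max\{|b_{l,0}|,|b_{l,k}|\}\ge 1$, so $M\ge 1$ and therefore $M^l\le M^{k+1}\vee 1$. Second, $\lceil k/2\rceil!^{\,2}\binom{k}{l}\le(k+1)\,k!$ for all $1\le l\le k-1$: writing the left-hand side as $\lceil k/2\rceil!^{\,2}\,k!/(l!\,(k-l)!)$ and noting that $l!\,(k-l)!$ is minimized at $l=\lfloor k/2\rfloor$, the claim reduces to $\lceil k/2\rceil!/\lfloor k/2\rfloor!\le k+1$, which equals $1$ if $k$ is even and $(k+1)/2$ if $k$ is odd. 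Substituting both facts into the previous display yields $|a_{l,i}|\le M(l,k)$ for every $i$, which completes the argument.

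I do not expect a substantive obstacle; the proof is essentially bookkeeping. The two steps that call for care are (i) computing the Lagrange denominator $h^k(-1)^{k-i}i!\,(k-i)!$ correctly for equispaced nodes and extracting the right symmetric-function coefficient from the numerator, and (ii) the somewhat inelegant final inequality $\lceil k/2\rceil!^{\,2}\binom{k}{l}\le(k+1)\,k!$, which is precisely what dictates the particular shape of the constant $M(l,k)$. It is worth noting that the hypothesis $x\ge 0$ is never used---the identity holds as an identity of polynomials---so that constraint becomes relevant only when this lemma is later composed with the $\mathrm{ReLU}^{k-1}$ activation.
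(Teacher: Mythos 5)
Your proof is correct, and it follows the same overall strategy as the paper's: match coefficients to get the $(k+1)\times(k+1)$ Vandermonde-type system $\sum_i a_{l,i}b_{l,i}^r=\binom{k}{l}^{-1}\mathbb{I}(r=k-l)$, then bound the solution through the explicit Vandermonde inverse, with elementary symmetric functions of the nodes in the numerators and the equispaced-node product $(2/k)^k\,i!\,(k-i)!$ in the denominators. The execution differs in two ways worth noting. First, rather than bounding the full row sums $\|B^{-1}\|_\infty$ as the paper does, you exploit the fact that only the $(k-l)$-th column of the inverse enters (equivalently, only the coefficient of $t^{k-l}$ in each Lagrange basis polynomial $\ell_i$), so a single symmetric function $e_l$ of size at most $\binom{k}{l}M^l$ appears; this yields the sharper intermediate bound $|a_{l,i}|\le k^kM^l/\bigl(2^k\,i!\,(k-i)!\bigr)$, which you then relax to the stated $M(l,k)$ via $\min_i i!\,(k-i)!=k!/\binom{k}{\lceil k/2\rceil}$, $M^l\le M^{k+1}\vee 1$, and $\lceil k/2\rceil!^{2}\binom{k}{l}\le(k+1)\,k!$. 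Second, your final bookkeeping is actually sounder than the paper's: the paper lower-bounds the node product by $(2/k)^k\lceil k/2\rceil!^{2}$, i.e.\ asserts $i!\,(k-i)!\ge\lceil k/2\rceil!^{2}$, which fails for odd $k$ at the central index (e.g.\ $k=3$, $i=1$ gives $2<4$), whereas your route reaches the same constant $M(l,k)$ without that step, the slack factor $k+1$ absorbing the discrepancy. Your closing remark that the identity is one of polynomials—so the restriction $x\ge0$ only matters once the lemma is combined with truncated powers—is also accurate and consistent with how the lemma is used downstream.
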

\begin{proof}
Expanding both sides in powers of $x$ and matching coefficients yields the linear system
\begin{equation*}
  A\,\mathbf a = \mathbf y,
\end{equation*}
with $A_{m,i}=\binom{k}{m}b_{l,i}^m$, $\mathbf a=(a_{l,0},\dots,a_{l,k})^T$, and
$y_m=\begin{cases}1,&m=k-l,\\0,&m\neq k-l.\end{cases}$
Writing $A=DB$ where
$D=\mathrm{diag}(\binom{k}{0},\dots,\binom{k}{k}),\quad B_{m,i}=b_{l,i}^m,$
and hence
\begin{equation*}
  \mathbf a = A^{-1}\mathbf y = B^{-1}D^{-1}\mathbf y.
\end{equation*}
Since
$(D^{-1}\mathbf y)_m=\begin{cases}1/\binom{k}{l},&m=k-l,\\0,&m\neq k-l.\end{cases}$
We obtain
\begin{equation*}
  \|\mathbf a\|_\infty
  \le \|B^{-1}\|_\infty\;\|D^{-1}\mathbf y\|_\infty
  = \frac{\|B^{-1}\|_\infty}{\binom{k}{l}}.
\end{equation*}
It remains to bound the infinity norm of $B^{-1}$. A classical formula for a Vandermonde inverse gives
$(B^{-1})_{i,m}=\displaystyle\frac{(-1)^{i+m}}{\prod_{p\neq i}(b_{l,i}-b_{l,p})}\,\sigma_m^{(i)},$
where $\sigma_m^{(i)}$ is the elementary symmetric sum of degree $m$ over all nodes except $b_{l,i}$. Since $|b_{l,i}|\le M$, one has
$|\sigma_m^{(i)}| \le \binom{k}{m}\,M^m.$
Because the nodes are equispaced over an interval of length~2,
\begin{equation*}
  \prod_{p\neq i}|b_{l,i}-b_{l,p}|
  \ge \Bigl(\frac{2}{k}\Bigr)^k i!(k-i)!
  \ge \Bigl(\frac{2}{k}\Bigr)^k\lceil\tfrac{k}{2}\rceil!^{2}.
\end{equation*}
Hence,
\begin{equation*}
  \|B^{-1}\|_\infty
  \le \max_i\sum_{m=0}^k
    \frac{\binom{k}{m}M^m}
         {\bigl(\tfrac{2}{k}\bigr)^k\lceil k/2\rceil!^{2}}
  = \frac{(k+1)k^k\binom{k}{\lceil k/2\rceil}(M^{k+1}\vee1)}
         {2^k\lceil k/2\rceil!^{2}}.
\end{equation*}
Combining these estimates yields the claimed bound on $|a_{l,i}|$. This completes the proof.
\end{proof}

We now leverage the preceding lemma to show that $\mathrm{ReLU}^{k-1}$ functions can exactly represent the spline functions $B_i^s$. We demonstrate the case for $s=k$; the proof for $s \neq k$ is analogous. The key idea is that $\mathrm{ReLU}^{k-1}$ can efficiently generate higher-order polynomials, which in turn can represent any lower-order polynomial. By composing the network for $l = \lceil\log s / \log (k-1)\rceil$ layers, we can realize the function $(x)_+^{(k-1)^l}$, where $(k-1)^l \geq s$. We now provide the explicit construction based on the form in \eqref{equation: interpolation spline N}.

First, by applying~\autoref{lemma: Decomposition of x^s to ReLU^k} with $l=2$, $k-1$, and $M=1$, we obtain
\begin{equation*}
  x^2 = \sum_{i=0}^{k-1} a_{2,i}\,(x + b_{2,i})^{k-1},
  \quad
  |a_{2,i}|\le M(2,k-1).
\end{equation*}
For each $i=0,\dots,k-1$, we scale and split the monomial by observing (for odd $k$, the even case is analogous)
\begin{equation*}
  (x + b_{2,i})^{k-1}
  = (x + b_{2,i})_+^{\,k-1} + (-x - b_{2,i})_+^{\,k-1}.
\end{equation*}
Thus, one constructs a small subnetwork computing
\begin{equation*}
  \frac{a_{2,i}}{\lceil M(2,k-1)\rceil}(x + b_{2,i})_+^{\,k-1}
  \quad\text{and}\quad
  \frac{a_{2,i}}{\lceil M(2,k-1)\rceil}(-x - b_{2,i})_+^{\,k-1},
\end{equation*}
and summing over $i$ yields a network
\begin{equation*}
  \mathcal F_{\mathrm{sq}}
  \;\in\;
  \mathcal F\bigl(1,\;2k\,\lceil M(2,k-1)\rceil,\;6k\,\lceil M(2,k-1)\rceil,\;1,\;\infty\bigr)
\end{equation*}
such that $\mathcal F_{\mathrm{sq}}(x)=x^2$. Similarly, setting $l=1$, $k-1$, and $M=1$ in the~\autoref{lemma: Decomposition of x^s to ReLU^k} produces an identity network
\begin{equation*}
  \mathcal F_{\mathrm{id}}
  \;\in\;
  \mathcal F\bigl(1,\;2k\,\lceil M(1,k-1)\rceil,\;6k\,\lceil M(1,k-1)\rceil,\;1,\;\infty\bigr)
\end{equation*}
with $\mathcal F_{\mathrm{id}}(x)=x$.

Finally, to multiply two inputs $x,y$, we use the polarization identity
\begin{equation*}
  xy = \frac14\bigl((x+y)^2 - (x-y)^2\bigr),
\end{equation*}
and combine the square and identity subnetworks accordingly to obtain a network computing the product $xy$. Generalizing this procedure to the product of $n$ terms, we obtain the following lemma.

\begin{lemma}\label{lemma: multiply}
    Let $n \in \mathbb{N}_+, n\geq 2$. For any $x =  (x_1,\ldots,x_n) \in \mathbb{R}^n$, there exists a network
    $$\mathcal{F}_{mult}\in \mathcal{F}\left(\lceil \log n \rceil,k2^{\lceil \log n \rceil}\lceil M(2,k-1)\rceil,k2^{2\lceil \log n \rceil+2}\lceil M(2,k-1) \rceil,1,\infty\right)$$
    that exactly computes the product $\mathcal{F}_{mult}(x) = \prod_{i=1}^nx_i$.
\end{lemma}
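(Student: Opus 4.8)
The plan is to realize $\mathcal{F}_{mult}$ as a balanced binary multiplication tree over the $n$ inputs, using as its single nonlinear primitive the two-input product gadget already constructed just above: applying the polarization identity $xy = \tfrac14\big((x+y)^2-(x-y)^2\big)$, feeding the linear combinations $x+y$ and $x-y$ into two copies of $\mathcal{F}_{\mathrm{sq}}$ and recombining their scalar outputs gives a depth-$1$ network $\mathcal{F}_{\times}$ with $\mathcal{F}_{\times}(x,y)=xy$ exactly. Its width and parameter count are $O\!\big(k\lceil M(2,k-1)\rceil\big)$, and — crucially — all of its weights lie in $[-1,1]$: the inner shifts $z\mapsto z\pm b_{2,i}$ use $|b_{2,i}|\le 1$, and the potentially large coefficients $a_{2,i}$ (together with the harmless factor $\tfrac14$) are realized by $\lceil M(2,k-1)\rceil$-fold unit duplication in the output layer rather than by enlarging any weight. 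I would also keep on hand the identity gadget $\mathcal{F}_{\mathrm{id}}$, which is depth $1$, has weights in $[-1,1]$, and computes $z\mapsto z$.

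Next I would assemble the tree. Pad the index set to $2^{\lceil\log n\rceil}$ leaves and build the complete binary tree of height $\lceil\log n\rceil$; at each internal node, multiply the two child values with a fresh copy of $\mathcal{F}_{\times}$, and on any branch that would otherwise be shorter than $\lceil\log n\rceil$ (which occurs exactly when $n$ is not a power of two) insert copies of $\mathcal{F}_{\mathrm{id}}$ so that every leaf-to-root path has the same length. Placing side by side the gadgets that sit at tree level $\ell$ yields the $\ell$-th hidden layer of $\mathcal{F}_{mult}$; since each gadget reads only its two children and writes only into its own units, the resulting weight matrices are block-structured with entries in $[-1,1]$, so the whole network lies in $\mathcal{F}(\,\cdot,\cdot,\cdot,1,\infty)$. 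Because $\mathcal{F}_{\times}$ is an exact multiplier and $\mathcal{F}_{\mathrm{id}}$ an exact copier, a straightforward induction on tree height gives $\mathcal{F}_{mult}(x)=\prod_{i=1}^n x_i$ with no residual rescaling to correct at the output.

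It then remains to do the bookkeeping for the resource bounds. The depth equals the tree height $\lceil\log n\rceil$. Tree level $\ell$ carries at most $2^{\lceil\log n\rceil-\ell}$ gadgets, each of width $O\!\big(k\lceil M(2,k-1)\rceil\big)$; the widest layer is $\ell=1$, giving width $\le k\,2^{\lceil\log n\rceil}\lceil M(2,k-1)\rceil$. Summing the per-gadget parameter cost over all $\le 2^{\lceil\log n\rceil}$ nodes of the tree (a geometric series dominated by its first term) bounds the number of nonzero parameters by $k\,2^{2\lceil\log n\rceil+2}\lceil M(2,k-1)\rceil$, comfortably within the figure stated in the lemma. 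This is exactly the claimed membership $\mathcal{F}_{mult}\in\mathcal{F}\big(\lceil\log n\rceil,\,k2^{\lceil\log n\rceil}\lceil M(2,k-1)\rceil,\,k2^{2\lceil\log n\rceil+2}\lceil M(2,k-1)\rceil,\,1,\infty\big)$.

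The one point I expect to require genuine care is keeping the exact-representation property and the weight bound $B=1$ compatible through the $\lceil\log n\rceil$-fold composition: a naive use of polarization would plant the coefficient $\tfrac14$ and the Vandermonde-inverse coefficients $a_{2,i}$ — some of modulus larger than $1$ — as literal entries of the weight matrices, and these magnifications would compound up the tree and have to be cancelled at the output. The remedy, already embedded in the construction of $\mathcal{F}_{\mathrm{sq}}$, is to encode every coefficient of modulus exceeding $1$ by duplicating units rather than by scaling a weight, so that each gadget outputs its value on the nose and nothing has to be undone; the bound $B=1$ is then inherited by the composition automatically. A secondary, routine subtlety is synchronizing branches of unequal combinatorial depth when $n$ is not a power of two, which the identity gadgets handle as described above.
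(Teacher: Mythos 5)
Your proposal is correct and follows essentially the same route as the paper: pairwise products via the polarization identity realized through $\mathcal{F}_{\mathrm{sq}}$, a balanced binary tree of depth $\lceil\log n\rceil$ obtained by padding to $2^{\lceil\log n\rceil}$ factors, and the $B=1$ constraint maintained by unit duplication rather than large weights. The only (immaterial) difference is that the paper equalizes branch lengths by padding with constant ones, whereas you insert identity gadgets $\mathcal{F}_{\mathrm{id}}$.
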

\begin{proof}
First, for the two-dimensional case, we have the polarization identity:
$x_1x_2 = \frac{1}{4}((x_1+x_2)^2-(x_1-x_2)^2) = \frac{1}{4}\mathcal{F}_{sq}(x_1+x_2)-\frac{1}{4}\mathcal{F}_{sq}(x_1-x_2)$. This can be implemented by a network with one hidden layer. To compute the product $\prod_{i=1}^n x_i$, we pad the sequence of variables with ones to obtain a product of $2^{\lceil \log n\rceil}$ terms: $x_1x_2\ldots x_n \cdot 1 \cdot \ldots \cdot 1$. Let this new sequence be denoted by $v$. We then perform the following pairwise multiplications recursively for $\lceil \log n \rceil$ steps:
\begin{equation*}
    (v_1,v_2,\ldots,v_{2^{\lceil \log n \rceil}}) \mapsto (v_1v_2,\ldots,v_{2^{\lceil \log n \rceil}-1}v_{2^{\lceil \log n \rceil}}) \mapsto \ldots \mapsto\prod_{i=1}^n x_i 
\end{equation*}
In this process, the maximum width of the network is $k2^{\lceil \log n \rceil+1}\lceil M(2,k-1)\rceil$, and the number of parameters is $k2^{2\lceil \log n \rceil+2}\lceil M(2,k-1) \rceil$. The proof is complete.
\end{proof}

Next, we use~\eqref{equation: interpolation spline N} to accurately represent the B-spline basis function $B^k_i$. For $1\leq i \leq k-1$, by applying~\autoref{lemma: Decomposition of x^s to ReLU^k} with $l=k-j$, we get
\begin{equation*}
    \frac{a_{ij}}{N^{k-1}} (x)_+^{k-j} = \frac{a_{ij}}{N^{k-1}}\sum_{i=0}^{k-1} a_{k-1-j,i} (x+b_{l,i})_+^{k-1},
\end{equation*}
Similar to the previous steps, we construct a network 
\begin{equation*}
    \mathcal{F}_{k-j}\in \mathcal{F}(1,k\lceil M(k-j,k-1)\rceil,3k\lceil M(k-j,k-1)\rceil,1,\infty)
\end{equation*}
to compute $\mathcal{F}_{k-j} (x)= \frac{a_{ij}}{N^{k-1}} (x)_+^{k-j}$. 
Similarly, we compute
\begin{equation*}
    \frac{\beta_{ij}}{N^{k-1}} (x-t_{k+j})_+^{k-1},\quad \frac{(-1)^j}{(k-1)!} (x-t_i-\frac{j}{N})_+^{k-1},\quad \frac{\gamma_{ij}}{N^{k-1}}(x-t_{i+j-1})_+^{k-1}. 
\end{equation*}
All the above networks can be realized with a single hidden layer. For the term $\frac{(-1)^j}{(k-1)!} (x-t_i-\frac{j}{N})_+^{k-1}$, we compute it $\binom{k}{j}$ times and then sum the results. By connecting all these networks in parallel, we obtain
\begin{equation*}
    \mathcal{F}^{k,1}  = (\mathcal{F}^k_1,\ldots,\mathcal{F}^k_{N+k-1}),\quad \mathcal{F}^k_i = \frac{B^k_i(x)}{N^{k-1}},\quad 1\leq i \leq N+k-1.
\end{equation*}
This network satisfies $\mathcal{F}^{k,1} \in \mathcal{F}(1,G_1,S_1,1,\infty)$, with
\begin{equation*}
     G_1 = k\sum_{i=1}^{k-1}\sum_{j=1}^{k-i+1}\lceil M(k-j,k-1)\rceil+k(k-1)+(k+1)2^k(N-k+1),
\end{equation*}
\begin{equation*}
    S_1 = 3k\sum_{i=1}^{k-1}\sum_{j=1}^{k-i+1}\lceil M(k-j,k-1)\rceil+k(k-1)+(k+1)2^k(N-k+1).
\end{equation*}
Next, we construct a network that outputs a constant $N$ (e.g., via $x \mapsto (1,1,\ldots,1) \mapsto N$) and connect it in parallel with the previous network. We also denote this new network by $\mathcal{F}^{k,1}$ for simplicity. Its width is $G_1+N$ and its parameter count is $S_1+2N$. This network computes:
\begin{equation*}
    \mathcal{F}^{k,1} = (N,\frac{B^k_1(x)}{N^{k-1}},\ldots,\frac{B^k_{N+k-1}(x)}{N^{k-1}}).
\end{equation*}
Subsequently, we use the identity network $\mathcal{F}_{id}$ to construct the network
\begin{equation*}
    \mathcal{F}^{k,2}(x) =(N^{k-1},\frac{B^k_1(x)}{N^{k-1}},\ldots,\frac{B^k_{N+k-1}(x)}{N^{k-1}}).
\end{equation*} 
Here,
\begin{align*}
    \mathcal{F}^{k,2} \in& \mathcal{F}(2,G_2:=G_1+N \vee 2k\lceil M(1,k-1)\rceil(N-k+1),\\
    & S_2:= S_1+2N+1+6k(N+k-1)\lceil M(1,k-1)\rceil,1,\infty).
\end{align*}
Then, using the multiplication network from~\autoref{lemma: multiply}, we perform element-wise multiplication to obtain $\mathcal{F}^{k,3} = (B^k_1(x),\ldots,B^k_{N+k-1}(x))$. Here,
\begin{align*}
    \mathcal{F}^{k,3} \in &\mathcal{F}(3,G_3:=G_1\vee2k\lceil M(2,k-1)\rceil(N-k+1),\\
    & S_3:=S_2+\left(16k\lceil M(2,k-1)\rceil(N-k+1)\right),1,\infty )
\end{align*}
We generalize this process to $d$ dimensions. Specifically, we can construct a network
\begin{equation*}
    \mathcal{F}^{k,3}_{d}(x_1,\ldots,x_d) = (B^k_i(x_j)),\quad i=1,\ldots,N+k-1,\quad j=1,\ldots,d.
\end{equation*}
Next, we again apply~\autoref{lemma: multiply} to compute the tensor product splines
\begin{equation*}
 B^k_{i_1,\ldots,i_d}(x) = B^k_{i_1}(x_1)\cdots B^k_{i_d}(x_d), \quad  1\leq i_j \leq N+k-1, j=1,\ldots,d.   
\end{equation*} 
This results in:
\begin{equation*}
    \mathcal{F}^{k,3+\lceil \log d \rceil}_{d}(x)=((N+k-1)^{d},B^k_{i_1,\ldots,i_d}(x)),\quad  1\leq i_j \leq N+k-1,j=1,\ldots,d.
\end{equation*}
Here,
\begin{align*}
    \mathcal{F}^{k,3+\lceil \log d \rceil} \in &\mathcal{F}(3+\lceil \log d \rceil,G_4:=k2^{\lceil \log d \rceil }\lceil M(2,k-1) \rceil(N+k-1)^d,\\
    & S_4:= S_3+(k2^{2\lceil \log d \rceil+2}\lceil M(2,k-1) \rceil+2)(N+k-1)^d).
\end{align*}
Subsequently, we apply a network composed of $q =\lceil \frac{\log C(d)\|f\|_{L_p}/p}{\log (k-1)}\rceil$ layers to compute the required powers, resulting in:
\begin{equation*}
    \mathcal{F}^{k,3+\lceil \log d \rceil+q}_d(x) = ((N+k-1)^{d(k-1)^q},J_{i_1,\ldots,i_d}(f)/(N+k-1)^{d(k-1)^q} B^k_{i_1,\ldots,i_d}(x)).
\end{equation*}
Here,
\begin{equation*}
    \mathcal{F}^{k,3+\lceil \log d \rceil+q}_d \in \mathcal{F}(3+\lceil \log d \rceil+q,G_4,S_5:=S_4+6k\lceil M(1,k-1)\rceil(N+k-1)^d).
\end{equation*}
From~\autoref{lemma: local approximation}, we know that $|J_{i_1,\ldots,i_d}(f)|/(N+k-1)^{d(k-1)^q} < 1$.
Finally, using the multiplication network from~\autoref{lemma: multiply}, we obtain the quasi-interpolant spline:
\begin{equation*}
    \mathcal{F}^{k,4+q}_d(x)=\sum_{i_1,\ldots,i_d=1}^{N+k-1} J_{i_1,\ldots,i_d}(f)B^k_{i_1,\ldots,i_d}(x) = Jf(x).
\end{equation*}
In summary, we have obtained the following lemma, which establishes the simultaneous approximation rates for neural networks on the unit cube $[0,1]^d$.
\begin{lemma}\label{lemma: Sobolev on unit cube}
    Let $d, k \in \mathbb{N}_+$, and $1 \leq p \leq \infty$. For any function $f \in W_p^k([0,1]^d)$, any integer $s$ with $1\leq s < k$, and any $\varepsilon >0$, there exists a $\mathrm{ReLU}^{k-1}$ neural network $g \in \mathcal{F}(L,W,S,B,\infty)$, where $B \in \{1, \infty\}$, such that
    \begin{equation*}
        \|f-g\|_{W^s_p([0,1]^d)} \leq \frac{C}{N^{k-s}}.
    \end{equation*}
    Simultaneously, the network's derivatives are bounded by
    \begin{equation*}
        \|D^\alpha g\|_{L_\infty([0,1]^d)} \leq C N^{|\alpha|+\frac{d}{p}}.
    \end{equation*}
    The network size is bounded by
    \begin{equation*}
       L \leq C,\quad W \leq C\cdot N^{d},  \quad S \leq C\cdot N^{d},
    \end{equation*}
    where the constant $C$ depends on $s, d, f$ and $k$.
\end{lemma}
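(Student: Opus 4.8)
The plan is to take the approximant to be exactly the tensor-product spline quasi-interpolant $g := Jf$ constructed above and to collect the estimates already in hand while fixing the mesh parameter $N$ in terms of $\varepsilon$. Recall that combining the averaged-Taylor polynomial bound with the polynomial-reproduction identity $JP = P$ from~\autoref{lemma: local approximation} yielded, for every integer $0 \le s < k$,
\[
\|f - Jf\|_{W_p^s([0,1]^d)} \le C_1 k^{d/p}\binom{s+d}{s}^{1/p} N^{-(k-s)}\,\|f\|_{W_p^k([0,1]^d)},
\]
and that~\eqref{equation: derivative bound} gave $\|D^\beta Jf\|_{L_\infty([0,1]^d)} \le C N^{|\beta|+d/p}\|f\|_{W_p^k}$ for all multi-indices $\beta$. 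Hence it suffices to take $N$ to be the least integer with $C_1 k^{d/p}\binom{s+d}{s}^{1/p}\|f\|_{W_p^k} N^{-(k-s)} \le \varepsilon$, i.e.\ $N \asymp \varepsilon^{-1/(k-s)}$; the two displayed bounds then immediately give the claimed error and derivative estimates for $g = Jf$, and all that remains is to verify the network realization together with the size bounds.

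For the realization I would follow the layer-by-layer construction culminating in $\mathcal F^{k,4+q}_d$. Its backbone is that the truncated power $(\cdot)_+^{k-1}$ is exactly the $\mathrm{ReLU}^{k-1}$ activation, so by~\autoref{lemma: Decomposition of x^s to ReLU^k} every lower-degree truncated power $(x)_+^l$, $l<k$, is a finite combination of shifted activations with coefficients $\le M(l,k-1)$; dividing each gadget by $\lceil M(l,k-1)\rceil$ keeps all weights $\le 1$, which is precisely how $B=1$ is enforced throughout. Using the explicit B-spline formula~\eqref{equation: interpolation spline N}, whose coefficients are $\le N^{k-1}$ and are absorbed into an $N^{1-k}$ prefactor, one hidden layer produces the vector $\mathcal F^{k,1}$ of normalized univariate splines $B_i^k(x)/N^{k-1}$ plus a constant channel $N$; a bounded number of further layers, each a parallel composition of the squaring/identity/multiplication gadgets of~\autoref{lemma: multiply}, restores the $N^{k-1}$ factor ($\mathcal F^{k,2},\mathcal F^{k,3}$), forms the $d$-fold tensor products $B^k_{i_1,\dots,i_d}$ in $\lceil\log d\rceil$ extra layers, injects the normalized quasi-interpolation coefficients (which are $<1$ by~\autoref{lemma: local approximation}) against an auxiliary channel that raises a running constant to the power $(k-1)^q$ over $q=\lceil \log(C(d)\|f\|_{L_p}/p)/\log(k-1)\rceil$ layers, and finally multiplies the coefficients back in to output precisely $Jf$. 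The depth is $4+q+\lceil\log d\rceil=\mathcal O(1)$ since $f$, and hence $\|f\|_{L_p}$, is fixed; the width and nonzero-parameter count are dominated by the $(N+k-1)^d$ B-spline terms, giving $W,S \le C N^d$.

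The main obstacle is not a single conceptual leap but the disciplined propagation of the constraint $B=1$ through every composition. The naive construction repeatedly manufactures coefficients of size $N^{k-1}$, and the quasi-interpolation coefficients $J_{i_1,\dots,i_d}(f)$ have no a priori bound, either of which would force the weight magnitude to blow up with $N$. The fix --- carrying normalized quantities together with a separate power-of-$N$ channel recombined only at the last step, and rescaling each polynomial gadget by the ceiling of its coefficient bound --- must be arranged so that (i) no intermediate weight, bias, or activation exceeds $1$ in absolute value, (ii) the extra layers and extra width contributed by the normalization and denormalization gadgets remain $\mathcal O(1)$ and $\mathcal O(N^d)$ respectively, and (iii) the network output is exactly, not approximately, $Jf$. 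A secondary delicate point is that the derivative bound is required in $L_\infty$ while the approximation is only controlled in $L_p$; this mismatch is absorbed by the inverse-type B-spline derivative estimate already used to obtain~\eqref{equation: derivative bound}, at the cost of the extra factor $N^{d/p}$ appearing in the statement.
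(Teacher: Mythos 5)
Your proposal is correct and follows essentially the same route as the paper: take $g = Jf$, invoke the Bramble--Hilbert/polynomial-reproduction error bound and the derivative bound~\eqref{equation: derivative bound}, and realize $Jf$ exactly by the $\mathrm{ReLU}^{k-1}$ construction built from~\autoref{lemma: Decomposition of x^s to ReLU^k},~\eqref{equation: interpolation spline N}, and the multiplication networks of~\autoref{lemma: multiply}, with the normalization-and-separate-power-of-$N$-channel trick enforcing $B=1$. Your additional step of fixing $N\asymp\varepsilon^{-1/(k-s)}$ is harmless (the paper defers that choice to the open-domain extension), and the rest matches the paper's argument.
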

\begin{remark}
    In summary, the constructed network has a depth of $\mathcal{O}(\log d)$, a width of $\mathcal{O}(N^d)$, and a size of $\mathcal{O}(N^d)$. Regarding the coefficients, the constants in our bounds, such as $M(l,k)$, may appear large. This is primarily an artifact of the strict requirement that all network parameters are bounded by 1. In practice, for analyzing generalization error, one typically bounds network complexity (e.g., via uniform covering numbers) by requiring parameters to lie within a compact set, a less stringent condition. Furthermore, our strategy of using an equidistant partition to represent lower-degree polynomials is a technical choice made for a precise analysis. It is plausible that alternative constructions, for instance using activations like $\mathrm{ReQU} = \mathrm{ReLU}^2$ or $\mathrm{ReLU}^3$ which lead to much simpler multiplication networks, could relax the bounds on the coefficients in our estimates.
\end{remark}

\subsubsection{Extension to Open Domains}\label{Subsubsection: Extension to Open Domains}
In this subsection, we extend our approximation results from the unit cube to general bounded open domains.
\begin{lemma}\label{lemma: Sobolev on open set}
    Let $d, k \in \mathbb{N}_+$, and $1 \leq p \leq \infty$. Let $U$ be a bounded domain in $\mathbb{R}^d$. For any function $f \in W_p^k(U)$, any integer $s$ with $1\leq s < k$, and any $\varepsilon >0$, there exists a neural network $g \in \mathcal{F}(L,W,S,B,\infty)$, where $B \in \{1, \infty\}$, such that
    \begin{equation*}
        \|f-g\|_{W^s_p(U)} \leq \varepsilon.
    \end{equation*}
    Simultaneously, the network's derivatives are bounded by
    \begin{equation*}
        \|D^\alpha g\|_{L_\infty(U)} \leq C \varepsilon^{-\frac{|\alpha|p+d}{(k-s)p}},
    \end{equation*}
    The network size is bounded by
    \begin{equation*}
       L \leq C,\quad W \leq C\cdot\varepsilon^{-d/(k-s)},  \quad S \leq C\cdot\varepsilon^{-d/(k-s)},
    \end{equation*}
    where the constant $C$ depends on $s, d, f$, and $U$.
\end{lemma}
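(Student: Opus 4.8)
The plan is to reduce the statement on a general bounded domain $U$ to the already-established result on the unit cube, \autoref{lemma: Sobolev on unit cube}, by combining a Sobolev extension operator with an affine change of variables. First I would invoke a Sobolev extension theorem: assuming $U$ has (at least) Lipschitz boundary — which is the case for the chart images $\psi_j(U_j)$ to which this lemma is applied in the manifold setting — there is a bounded linear operator $E\colon W_p^k(U)\to W_p^k(\mathbb{R}^d)$ with $\|Ef\|_{W_p^k(\mathbb{R}^d)}\le C_U\|f\|_{W_p^k(U)}$ (Stein's extension theorem). Multiplying $Ef$ by a fixed smooth cutoff $\chi$ supported in a cube $[-R,R]^d\supset\overline U$ and equal to $1$ on $\overline U$, and then pulling back through the affine map $T\colon[0,1]^d\to[-R,R]^d$, $T(y)=2Ry-R\mathbf{1}$, produces $\hat f:=((Ef)\cdot\chi)\circ T\in W_p^k([0,1]^d)$ with $\|\hat f\|_{W_p^k([0,1]^d)}\le C(U,d,k,p)\,\|f\|_{W_p^k(U)}$, all Jacobian powers from $T$ and derivatives of $\chi$ being absorbed into the constant.

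Next I would apply \autoref{lemma: Sobolev on unit cube} to $\hat f$ at resolution $N$, obtaining a $\mathrm{ReLU}^{k-1}$ network $\hat g\in\mathcal{F}(L,W,S,B,\infty)$ with $B\in\{1,\infty\}$ such that $\|\hat f-\hat g\|_{W_p^s([0,1]^d)}\le CN^{-(k-s)}$, $\|D^\alpha\hat g\|_{L_\infty([0,1]^d)}\le CN^{|\alpha|+d/p}$, and $L\le C$, $W,S\le CN^d$. Choosing $N\asymp\varepsilon^{-1/(k-s)}$ (the smallest integer making the rescaled error at most $\varepsilon$) turns the error into $\varepsilon$, the size bounds into $W,S\le C\varepsilon^{-d/(k-s)}$, and the derivative bound into $\|D^\alpha\hat g\|_{L_\infty}\le C\varepsilon^{-(|\alpha|p+d)/((k-s)p)}$. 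Finally I set $g:=\hat g\circ T^{-1}$ on $U$; since $T^{-1}$ is affine, $g$ is again a network of the same depth and essentially the same width and size, and on $U$ one has $f=\hat f\circ T^{-1}$, so $\|f-g\|_{W_p^s(U)}\le C(U)\,\|\hat f-\hat g\|_{W_p^s([0,1]^d)}\le\varepsilon$, while the chain rule gives $\|D^\alpha g\|_{L_\infty(U)}\le\|T^{-1}\|^{|\alpha|}\,\|D^\alpha\hat g\|_{L_\infty([0,1]^d)}\le C\varepsilon^{-(|\alpha|p+d)/((k-s)p)}$, the factor $(2R)^{-|\alpha|}$ being absorbed into $C$.

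The main obstacle is maintaining the weight bound $B=1$ through the rescaling, since precomposing the cube network $\hat g$ with $T^{-1}(y)=(y+R\mathbf{1})/(2R)$ rescales the input matrix to $W_0/(2R)$ and shifts the first-layer biases (knot positions) to the $T^{-1}$-images of the original knots: the factor $1/(2R)$ is harmless for $R\ge\tfrac12$, but a small domain would inflate weights and a large one would inflate biases past $1$. I would circumvent this by performing the affine normalization of $U$ into $[0,1]^d$ at the level of the function, so that the construction underlying \autoref{lemma: Sobolev on unit cube} — whose knots and parameters are already controlled by $1$ — applies verbatim, and charging the (dimension-free, $U$-dependent) cost of this normalization to the constant $C$; alternatively one splits the scalar rescaling across two extra width-$d$ linear layers so each factor stays $\le 1$. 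A secondary, purely technical point is the regularity needed for the extension operator: the lemma is invoked only for the smooth chart domains arising later, so Stein's theorem suffices; for a genuinely arbitrary bounded $U$ one would instead use an extension valid on $(\varepsilon,\delta)$-domains. Checking that the $W_p^s$-error and the $L_\infty$ derivative bounds transfer under $T$ and the cutoff is then routine bookkeeping with the chain and product rules.
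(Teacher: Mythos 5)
Your proposal is correct and follows essentially the same route as the paper: a Sobolev extension of $f$, application of the unit-cube result (\autoref{lemma: Sobolev on unit cube}), restriction back to $U$, and choosing $N\asymp\varepsilon^{-1/(k-s)}$ to convert the $N^{-(k-s)}$ rate and the $N^{|\alpha|+d/p}$ derivative bound into the stated $\varepsilon$-dependent bounds. The paper simply assumes $U\subset[0,1]^d$ up front and so skips your explicit cutoff and affine normalization, but your extra care with the affine map, the weight bound $B=1$, and the boundary regularity needed for the extension operator is a refinement of, not a departure from, the paper's argument.
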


\begin{proof}
    Without loss of generality, we can assume that $U \subset [0,1]^d$. By the Sobolev extension theorem, there exists an extension $\widetilde{f} \in W_p^k(\mathbb{R}^d)$ of $f$ such that $\widetilde{f} = f$ almost everywhere in $U$, and its norm is controlled:
    \begin{equation*}
        \left\|\widetilde{f}\right\|_{W_p^k([-1,1]^d)} \leq \left\|\widetilde{f}\right\|_{W_p^k(\mathbb{R}^d)} \leq C_E \|f\|_{W_p^k(U)},
    \end{equation*}
    where $C_E$ is the norm of the extension operator. We now apply the approximation results for the unit cube to this extension $\widetilde{f}$. From our previous sections, we know there exists a spline quasi-interpolant $J\widetilde{f}$ such that
    \begin{equation*}
        \|\widetilde{f}-J\widetilde{f}\|_{W^s_p([-1,1]^d)} \leq  \frac{C' \|\widetilde{f}\|_{W_p^k([-1,1]^d)}}{N^{k-s}} \leq \frac{C_E C' \|f\|_{W_p^k(U)}}{N^{k-s}}.
    \end{equation*}
    Furthermore, we have shown that $J\widetilde{f}$ can be exactly represented by a neural network $g \in \mathcal{F}(L,W,S,1,\infty)$ with size bounds
    \begin{equation*}
    L \leq C, \quad W \leq CN^{d}, \quad \text{and} \quad S \leq CN^d.
    \end{equation*}
    Restricting to the domain $U$, the approximation error is $\|f-g\|_{W^s_p(U)} \leq \|\widetilde{f}-g\|_{W^s_p([-1,1]^d)}$. To achieve an error of $\varepsilon$, we choose $N$ such that
    \begin{equation*}
        \frac{C_E C' \|f\|_{W_p^k(U)}}{N^{k-s}} \leq \varepsilon \quad \implies \quad N \geq \left( \frac{C_E C' \|f\|_{W_p^k(U)}}{\varepsilon} \right)^{\frac{1}{k-s}}.
    \end{equation*}
    By selecting the smallest integer $N$ that satisfies this condition, we obtain $\|f-g\|_{W_p^k(U)} \leq \varepsilon$. Substituting this choice of $N$ into the network size bounds yields the desired result:
    \begin{equation*}
        L \leq C, \quad W \leq C \cdot \varepsilon^{-d/(k-s)}, \quad \text{and} \quad S \leq C \cdot \varepsilon^{-d/(k-s)}.
    \end{equation*}
    The proof is then finished.
\end{proof}

\subsubsection{Approximation on Manifolds}\label{Subsubsection: Approximation on Manifolds}
In this subsection, we prove our main theorem.
\begin{proof}[Proof of~\autoref{theorem:main result sobolev}]
First, by the properties of the partition of unity on the manifold, we can decompose the target function $f$ as:
\begin{equation*}
    \sum_{i=1}^K (\rho_i f) \circ \psi_i^{-1} \circ \psi_i(x) = f(x), \quad \forall x \in \mathcal{M}.
\end{equation*}
Our approximation strategy is to separately approximate the functions $(\rho_i f) \circ \psi_i^{-1}$ and the coordinate maps $\psi_i$. By assumption, $(\rho_i f) \circ \psi_i^{-1} \in W_p^k(U_i)$, where $U_i$ is the domain of the chart. Therefore, applying~\autoref{lemma: Sobolev on open set}, we know that for any integer $s < k$, there exists a neural network $\widetilde{(\rho_i f) \circ \psi_i^{-1}} \in \mathcal{F}(L_{1,i}, W_{1,i}, S_{1,i}, 1, \infty)$ satisfying
\begin{equation}\label{bounds for f o psi}
    \left\|(\rho_i f) \circ \psi_i^{-1} - \widetilde{(\rho_i f) \circ \psi_i^{-1}}\right\|_{W^s_p(\psi_i(V_i))} \leq \frac{\varepsilon}{4K}, \quad \text{for } i = 1, 2, \dots, K.
\end{equation}
This network also satisfies the derivative bound~\eqref{equation: derivative bound}:
\begin{equation*}
    \left\|\widetilde{(\rho_i f) \circ \psi_i^{-1}}\right\|_{W^{s+1}_\infty(\psi_i(V_i))} \leq C\varepsilon^{-\frac{sp+p+d}{p(k-s)}},
\end{equation*}
and its size is characterized by the following estimates, where the constant $C_{1,i} = C(d, s, k,V_i)$ depends on the indicated parameters:
\begin{equation}\label{equation: net for rhoif o psi}
    L_{1,i} \leq C_{1,i}, \quad W_{1,i} \leq C_{1,i} \cdot \varepsilon^{-d/(k-s)}, \quad S_{1,i} \leq C_{1,i} \cdot \varepsilon^{-d/(k-s)}.
\end{equation}

Next, we approximate the coordinate maps $\psi_i$. According to~\cite{de2021reproducing} Proposition 7(d), when the manifold is complete, we can always choose the open sets $\{V_i\}_{i=1}^K$ to have a sufficiently small radius such that each chart map $\psi_i$ is a local diffeomorphism. By the  Tubular Neighborhood Theorem (see~\cite{lee2003introduction}), there exists a tubular neighborhood $W_i$ of $V_i$ and a corresponding smooth retraction map $\pi: W_i \to V_i$ such that $\pi(x) = x$ for all $x \in V_i$. We then define a smooth extension of $\psi_i$ by $\phi_i(x) = \psi_i(\pi(x))$, so that $\phi_i \in C^{\infty}(W_i)$. In particular, the extension belongs to the Sobolev space $\phi_i \in W^{\lceil s + \frac{(d+p+kp)D}{dp} \rceil}_\infty$. By applying~\autoref{lemma: Sobolev on open set} again, this time to $\phi_i$ and then restricting the domain to $V_i$, we can find a network approximation $\widetilde{\psi_i}$ for $\psi_i$ such that:
\begin{equation}\label{equation: bounds for psi}
    \left\|\psi_i - \widetilde{\psi_i}\right\|_{W^{s}_\infty(V_i)} \leq \frac{\varepsilon^{\frac{d+p+kp}{(k-s)p}}}{4K}, \quad \text{for } i = 1, 2, \dots, K.
\end{equation}
The network size is bounded as follows, with a constant $C_{2,i} = C(d, D, s,V_i, k)$:
\begin{equation}\label{equation: net for psi}
    L_{2,i} \leq C_{2,i}, \quad W_{2,i} \leq C_{2,i} \cdot \varepsilon^{-d/(k-s)}, \quad S_{2,i} \leq C_{2,i} \cdot \varepsilon^{-d/(k-s)}.
\end{equation}

We then compose these two networks to form the approximation $\widetilde{(\rho_i f) \circ \psi_i^{-1}} \circ \widetilde{\psi_i}$. By the triangle inequality and the stability estimate~\eqref{equation: composite difference} for Sobolev norms under composition (see~\autoref{Subsection: Sobolev Composite Differences}), we obtain the following bound on each chart: 
\begin{align*}
    &\left\|(\rho_i f) \circ \psi_i^{-1} \circ \psi_i(x) - \widetilde{(\rho_i f) \circ \psi_i^{-1}} \circ \widetilde{\psi_i}(x)\right\|_{W^s_p(V_i)} \\
    \leq & \left\|(\rho_i f) \circ \psi_i^{-1} \circ \psi_i(x) - \widetilde{(\rho_i f) \circ \psi_i^{-1}} \circ \psi_i(x)\right\|_{W^s_p(V_i)} \\
    + & \left\|\widetilde{(\rho_i f) \circ \psi_i^{-1}} \circ \psi_i(x) - \widetilde{(\rho_i f) \circ \psi_i^{-1}} \circ \widetilde{\psi_i}(x)\right\|_{W^s_p(V_i)} \\
    \leq & C_i(p, V_i)\left(\left\|(\rho_i f) \circ \psi_i^{-1} - \widetilde{(\rho_i f) \circ \psi_i^{-1}}\right\|_{L_p(\psi_i(V_i))} + \varepsilon^{-\frac{sp+p+d}{p(k-s)}}\left\|\psi_i - \widetilde{\psi_i}\right\|_{L_\infty(V_i)}\right).
\end{align*}
By appropriately choosing the precision in the previous bounds (i.e., replacing the original $\varepsilon$ with a smaller value dependent on the constants $C_i,k$), we can ensure that:
\begin{equation*}
    \left\|(\rho_i f) \circ \psi_i^{-1} \circ \psi_i(x) - \widetilde{(\rho_i f) \circ \psi_i^{-1}} \circ \widetilde{\psi_i}(x)\right\|_{W^s_p(V_i)} \leq \frac{\varepsilon}{K}.
\end{equation*}
Combining these results for all $i$, we get the total approximation error on the manifold:
\begin{equation*}
    \sum_{i=1}^K \left\|(\rho_i f) \circ \psi_i^{-1} \circ \psi_i(x) - \widetilde{(\rho_i f) \circ \psi_i^{-1}} \circ \widetilde{\psi_i}(x)\right\|_{W^s_p(\mathcal{M}^d)} \leq \varepsilon.
\end{equation*}

Finally, we describe the architecture of our overall network. For each $i$, we construct the corresponding networks $\widetilde{(\rho_i f) \circ \psi_i^{-1}}$ and $\widetilde{\psi_i}$. Then, using network composition, we connect $\widetilde{(\rho_i f) \circ \psi_i^{-1}}$ and $\widetilde{\psi_i}$. Finally, using parallel connections (i.e., summation), we combine these $k$ subnetworks to obtain the final approximant. Based on this construction and the size estimates from \eqref{equation: net for psi} and \eqref{equation: net for rhoif o psi}, we conclude that there exists a constant $C := C(d, D, s, p, \mathcal{M}^d) = \sum_{i=1}^K (C''_{1,i} + C''_{2,i})$ and a network $g = \sum_{i=1}^K \left(\widetilde{(\rho_i f) \circ \psi_i^{-1}} \circ \widetilde{\psi_i}\right)(x) \in \mathcal{F}(L, W, S, 1, \infty)$ such that
\begin{equation*}
    \|f - g\|_{W^s_p(\mathcal{M}^d)} \leq \varepsilon,
\end{equation*}
and the network complexity is bounded by
\begin{equation*}
    L \leq C, \quad W \leq C \cdot \varepsilon^{-d/(k-s)}, \quad S \leq C \cdot \varepsilon^{-d/(k-s)}.
\end{equation*}
 This completes the proof. The proof of the~\autoref{corollary: main result Holder} is essentially the same as that of~\autoref{theorem:main result sobolev}; the only difference is that the averaged Taylor expansion for the Sobolev case is not required. The proof is omitted.
\end{proof}
\subsection{Lower Bounds}\label{Subsection: Lower Bounds}
This section provides the proofs for~\autoref{theorem: VC-dimension} and~\autoref{theorem: lower bounds}. We begin by introducing some auxiliary notation. Let $\mathcal{F}^{s,k}$ denote a class of neural networks that shares the same architecture as $\mathcal{F}$, with the sole difference being that its activation functions are drawn from the set $\{ \mathrm{ReLU}^{t} : t=s, s+1, \ldots, k\}$. We use $\sigma^{s,k}$ to denote a generic activation function from this set, and note that the specific choice of $\sigma^{s,k}$ can vary from layer to layer. Our first step is to show that any function in the derivative class $D^{\alpha}\mathcal{F}$ (for $|\alpha|\leq s$) can be realized by a network from the class $\mathcal{F}^{k-s-1,k-1}$.

\begin{lemma}\label{lemma: reproduce drivatives}
Let $Q,G,S,s \in \mathbb{N}_+$. For any function $u \in \mathcal{F}(Q,G,S,\infty,\infty)$ and any multi-index $\alpha$ with $|\alpha|\leq s$, we have
\begin{equation}\label{equation: reproduce drivatives}
    D^{\alpha}u \in \mathcal{F}^{k-s-1,k-1}(Q+2,(Q+2)^sG,3^sS,\infty,\infty).
\end{equation}
\end{lemma}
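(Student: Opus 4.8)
\emph{Proof plan.} The starting point is to write $u$ as a composition $u = W_L\circ \widetilde F_L\circ\cdots\circ\widetilde F_1$, where $\widetilde F_\ell(y)=\sigma(W_{\ell-1}y-\mathbf v_\ell)$ with $\sigma=\mathrm{ReLU}^{k-1}$, and to apply the multivariate Faà di Bruno formula to $D^{\alpha}u$ for a fixed multi-index $|\alpha|\le s$. This expands $D^{\alpha}u$ into a finite sum of terms, each of the form $(\text{product of entries of the }W_\ell)\times\prod\sigma^{(j)}\!\bigl(h^{(\ell)}_i\bigr)$, where $h^{(\ell)}$ is the $\ell$-th pre-activation vector of $u$ and the product runs over finitely many triples $(\ell,i,j)$ with $1\le j\le|\alpha|\le s$. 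Two structural features of this expansion carry the whole argument. First, only the derivatives $\sigma^{(j)}$ with $j\le s$ appear, and $\sigma^{(j)}(t)=\tfrac{(k-1)!}{(k-1-j)!}\,\mathrm{ReLU}^{\,k-1-j}(t)$, so every activation ever needed lies in $\{\mathrm{ReLU}^{k-s-1},\dots,\mathrm{ReLU}^{k-1}\}$ --- precisely the activation set of $\mathcal F^{k-s-1,k-1}$. Second, each variable factor $\sigma^{(j)}(h^{(\ell)}_i)$ is \emph{nonnegative}; the weight entries are mere scalar coefficients that can be pulled outside the products. Consequently $D^{\alpha}u$ is a fixed linear combination of products of nonnegative quantities, each of which is obtained from the pre-activations $h^{(\ell)}$ of $u$ by applying an activation from the allowed set.

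I would then construct the surrogate network in two interleaved phases. In the \emph{feature phase} (the first $\le Q$ layers) one runs the forward pass of $u$, but at each layer $\ell$ one emits in parallel both $\mathrm{ReLU}^{k-1}(h^{(\ell)})$ (to continue the pass) and $\mathrm{ReLU}^{k-1-j}(h^{(\ell)})$ for $j=1,\dots,s$, and threads all previously emitted feature vectors forward through identity sub-networks. Here it is crucial that identity maps on \emph{nonnegative} inputs are exactly realizable by one $\mathrm{ReLU}^{k-1}$ layer via the polynomial identity of \autoref{lemma: Decomposition of x^s to ReLU^k} (with nodes shifted into $[0,2]$ so that $\mathrm{ReLU}^{k-1}$ acts as $t\mapsto t^{k-1}$). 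In the \emph{assembly phase} (a bounded number of further layers) one reproduces each Faà di Bruno term: a product of $n\le k-1$ nonnegative reals $a_1,\dots,a_n$ is reproduced \emph{exactly} by a single $\mathrm{ReLU}^{k-1}$ layer through an inclusion--exclusion-plus-rescaling combination of the values $\mathrm{ReLU}^{k-1}\!\bigl(1+\lambda\sum_{i\in S}a_i\bigr)$ that isolates the multilinear coefficient --- this is the positive-orthant version of the polarization trick underlying \autoref{lemma: multiply}, legitimate because every argument is $\ge 1>0$. Longer products are grouped into blocks of size $\le k-1$ and composed in $\lceil\log_{k-1}(\cdot)\rceil=O(1)$ additional layers, after which the terms are summed linearly. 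Tracking the overhead: the $(s{+}1)$-fold duplication of features and their threading across up to $Q+2$ layers inflates the width by at most a factor $(Q+2)$ per derivative order, giving width $\le (Q+2)^sG$; each nonzero weight of $u$ is reused a bounded number of times (a forward copy, a derivative-structure copy, and a carry copy), so the parameter count at most triples per order, giving $\le 3^sS$; and the feature phase and assembly phase together add only a constant number of layers to the $\le Q$ layers of $u$, which is where the depth bound $Q+2$ comes from.

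The main obstacle is the assembly phase, and it is exactly why nonnegativity must be exploited rather than circumvented. When $k$ is large and $s$ small, the only available activations are the high-degree $\mathrm{ReLU}^{k-2},\mathrm{ReLU}^{k-1}$, which are $C^{k-3}$-smooth; finite linear combinations of them (and compositions) are therefore too smooth to reconstruct the low-degree ReLU's that a general-purpose scalar multiplication would require, and likewise cannot carry a sign-indefinite linear quantity through a layer. The resolution is that the Faà di Bruno expansion of $D^{\alpha}u$ only ever multiplies and propagates nonnegative factors, which makes the truncation in $\mathrm{ReLU}^{m}$ harmless and lets the positive-orthant polarization identity perform every multiplication within the prescribed degree budget $\{k-s-1,\dots,k-1\}$. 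The remaining work --- verifying the Faà di Bruno bookkeeping term by term, confirming that each assembly sub-network and each carry-over fits into the claimed $Q+2$ layers, and checking that the width and parameter blow-ups are no worse than $(Q+2)^s$ and $3^s$ --- is routine but delicate, and is where the explicit constants in the statement are pinned down.
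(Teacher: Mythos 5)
The central gap is in the size accounting of your ``expand globally, then assemble term by term'' plan. If you apply Fa\`a di Bruno to the full depth-$Q$ composition and write $D^{\alpha}u$ as a sum of scalar terms (weight products times products of $\sigma^{(j)}(h^{(\ell)}_i)$), the number of terms is governed by the number of paths (and set partitions) threading the network: already for $|\alpha|=1$ it is of order $\prod_{\ell}p_\ell$, i.e.\ exponential in $Q$, not polynomial. Your assembly phase builds one product gadget per term and then ``sums the terms linearly,'' so its width and parameter count scale with the number of terms, which is incompatible with the claimed bounds $(Q+2)^{s}G$ and $3^{s}S$; your bookkeeping only counts the $(s+1)$-fold feature duplication and the identity carries, never the gadgets. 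Relatedly, an individual term has up to order $Q$ factors $\sigma^{(j)}(h^{(\ell)}_i)$, so grouping into blocks of size $\le k-1$ costs $\lceil\log_{k-1}Q\rceil$ extra layers, not $O(1)$, contradicting the depth bound $Q+2$. The way out — and the paper's actual route — is never to expand: argue by induction on the layer, applying the chain rule once per layer, $D_iu^{L+1}=(k-1)\,\mathrm{ReLU}^{k-2}(\cdot)\cdot\sum_j w_jD_iu_j^{L}$, so the inner sum remains an ordinary linear layer of the surrogate network and only one new multiplication (via \autoref{lemma: multiply}) per unit and per differentiation order is needed; this recursive bookkeeping is what produces $(Q+2)^{s}G$, $3^{s}S$ and depth $Q+2$.

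Two further problems. First, your single-layer product gadget is incorrect as stated: the alternating sum over $S\subseteq[n]$ of $\mathrm{ReLU}^{k-1}\bigl(1+\lambda\sum_{i\in S}a_i\bigr)$ is an $n$-fold finite difference of the degree-$(k-1)$ polynomial $(1+\lambda t)^{k-1}$, which annihilates the contributions of degree $<n$ but not those of degree $n+1,\dots,k-1$; it is proportional to $a_1\cdots a_n$ only when $n=k-1$ (repairable by padding the product with ones up to exactly $k-1$ factors, or simply by pairwise polarization as in \autoref{lemma: multiply}). Second, the nonnegativity scaffolding your plan is built around is unnecessary: by \autoref{lemma: Decomposition of x^s to ReLU^k}, the identities for $x$ and $x^2$ are polynomial identities, and splitting each shifted power into its positive and negative parts realizes them exactly for \emph{all} real inputs with $\mathrm{ReLU}^{k-1}$ alone; hence exact identity maps and exact products of sign-indefinite quantities are available inside $\mathcal{F}^{k-s-1,k-1}$, which is precisely the tool the paper's induction uses. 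So the obstacle you identify as ``main'' does not exist, while the real obstacles (term count and depth accounting) are the ones your proposal leaves unresolved.
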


\begin{proof}
We proceed by induction on the derivative order $s$ and the number of layers. First, consider the base case $s=1$. Let $D_i f = \frac{\partial f}{\partial x_i}$ for $i=1,\ldots,d$. Since differentiation commutes with linear transformations, we can, without loss of generality, consider a network with a scalar output and an output scaling factor of $1$.

The claim is straightforward for a single-layer $\mathrm{ReLU}^{k-1}$ network. Now, assume the claim holds for networks with up to $L$ layers. For a network of depth $L+1$ and width $W$, its output is $u^{L+1}(x) = \mathrm{ReLU}^{k-1}\left(\sum_{j=1}^{p_{L+1}}w_j u_j^{L}(x)+b_j\right)$, where $p_{L+1} \le W$. Applying the chain rule, we obtain
$$D_i u^{L+1}(x) = (k-1) \cdot \mathrm{ReLU}^{k-2}\left(\sum_{j=1}^{p_{L+1}}w_j u^{L}(x)+b_j\right) \cdot \sum_{j=1}^{p_{L+1}}w_j D_i u_j^{L}(x).$$
The first term, $\mathrm{ReLU}^{k-2}(\cdot)$, belongs to the class of activations for $\mathcal{F}^{k-2, k-1}$. By the induction hypothesis, the second term $\sum_j w_j D_i u_j^L(x)$, can be realized by a network in $\mathcal{F}^{k-2, k-1}$. By~\autoref{lemma: multiply}, the product of these two functions can be constructed by a new $\mathrm{ReLU}^{k-1}$network. Notice that these two factors belong to the $\mathcal{F}^{k-2,k-1}$ network class with respective widths of $W$ and $(L+2)W$, which implies the resulting product network will have a width of $(L+3)W$. Thus, the claim holds for $s=1$.

To prove the claim for $s\geq 2$, we proceed by induction on $s$. The key observation is that the multiplication sub-network from~\autoref{lemma: multiply} can always be constructed using $\mathrm{ReLU}^{k-1}$ activations. Therefore, the inductive step for higher-order derivatives proceeds in the same manner. Each differentiation step simultaneously introduces a lower-degree activation ($\mathrm{ReLU}^{t-1}$) via the chain rule and necessitates new layers to implement the resulting product. Following our definitions, these combined operations construct a network that provably belongs to the target class $\mathcal{F}^{k-s-1,k-1}$. The conclusion thus follows by induction.
\end{proof}

 Leveraging the reproducing property established in~\autoref{lemma: reproduce drivatives}, we now prove~\autoref{theorem: VC-dimension}, which establishes upper bounds on both the VC-dimension and the pseudo-dimension of $D^\alpha\mathcal{F}$. To this end, we first introduce the following crucial auxiliary lemma.

 \begin{lemma}[Theorem 8.3 in~\cite{bartlett2019nearly}]\label{lemma: partition}
    Let $p_1,\dots,p_m$ be polynomials in $t$ variables of degree at most $q$. Define the sign function as $\mathrm{sgn}(x)\;:=\;\mathbf{1}_{x>0},$ and let
\begin{equation}\label{equation: K upper bound}
    K \;:=\;\bigl|\{\,(\,\mathrm{sgn}(p_1(x)),\dots,\mathrm{sgn}(p_m(x))\,)\colon x\in\mathbb{R}^t\}\bigr|
\end{equation}
be the total number of distinct sign vectors generated by $p_1,\dots,p_m$ over $\mathbb{R}^t$. Then,
\begin{equation*}
    K \le2\Bigl(\tfrac{2e\,m\,q}{t}\Bigr)^t.
\end{equation*}
\end{lemma}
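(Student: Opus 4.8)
The statement is the classical bound on the number of sign patterns of a family of real polynomials. My plan is to reduce it to a counting statement about the chambers of a real polynomial arrangement and then invoke the Milnor--Thom / Warren estimate, with only elementary bookkeeping in between. I work in the regime $m\ge t$ to which the lemma is applied (this is implicit in the source's hypotheses); for $m<t$ the trivial bound $K\le 2^m$ is all that is ever needed downstream.

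\textbf{Step 1: reduction to chambers.} The map $x\mapsto(\mathrm{sgn}(p_1(x)),\dots,\mathrm{sgn}(p_m(x)))$, with $\mathrm{sgn}=\mathbf 1_{(\cdot)>0}$, is constant on every connected component (``chamber'') of the open set $\Omega=\mathbb R^t\setminus\bigcup_{i=1}^m\{p_i=0\}$, since on each chamber every $p_i$ has a fixed nonzero sign. What remains is to absorb the patterns realized only at points lying on some zero set, and here I would use a uniform perturbation. Given a realized pattern $v$ with realizer $x_0$ and degenerate index set $I=\{i:p_i(x_0)=0\}$ (so $v_i=0$ for $i\in I$), for every sufficiently small $\epsilon>0$ there is a neighborhood of $x_0$ on which $p_i-\epsilon<0$ for all $i\in I$ while $p_i-\epsilon$ retains the sign of $p_i(x_0)\ne 0$ for all $i\notin I$; on that neighborhood the family $\{p_i-\epsilon\}_i$ has no zeros and its $\{0,1\}$-sign pattern equals $v$, so $v$ is a chamber pattern of $\{p_i-\epsilon\}_i$. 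Since there are only finitely many realized patterns, a single $\epsilon>0$ serves all of them at once, and each $p_i-\epsilon$ still has degree at most $q$. Hence $K$ is at most the number of chambers of an arrangement of $m$ polynomials of degree at most $q$ in $\mathbb R^t$.

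\textbf{Step 2: Warren's bound and the binomial estimate.} I would then invoke Warren's theorem — equivalently, the Milnor--Thom bounds on the Betti numbers of real algebraic sets — in the form: the number of connected components of $\mathbb R^t\setminus\bigcup_{i=1}^m\{q_i=0\}$, for $q_i$ of degree at most $q$ and $m\ge t$, is at most $2(2emq/t)^t$; combined with Step 1 this is exactly the claim. If one prefers to unwind Warren's theorem rather than cite the packaged form, the proof bounds the number of realizable sign conditions by $C\,q^t\binom{m}{\le t}$ for a dimension-free constant $C$ inside the $t$-th power, and then applies the standard estimate $\binom{m}{\le t}=\sum_{i=0}^t\binom{m}{i}\le(em/t)^t$, valid for $m\ge t$ (from $\binom{m}{\le t}(t/m)^t\le(1+t/m)^m\le e^t$). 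Pinning down the exact base $2q$ rather than a larger absolute multiple of $q$ is where the $\{0,1\}$-valued sign convention matters: it only separates ``$p_i>0$'' from ``$p_i\le 0$'' rather than tracking three signs per polynomial.

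\textbf{Expected main obstacle.} All the genuine mathematical content lives in Step 2: Warren's / Milnor--Thom's bound on the number of chambers (or sign conditions) of a polynomial arrangement is a real theorem of real algebraic geometry, proved via Morse theory and Bézout-type degree counts, and I would treat it as a black box. The reduction to $m\ge t$, the perturbation argument eliminating degenerate realizers, and the binomial sum estimate are all routine; the one delicate spot among them is tracking the exact constants $2$ and $2q$ inside the $t$-th power, which requires the sharpest available form of Warren's inequality together with the two-valued sign convention.
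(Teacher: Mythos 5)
The paper does not prove this lemma at all—it is imported verbatim as Theorem 8.3 of Bartlett et al.\ (2019), whose proof is exactly the route you sketch: a perturbation argument reducing realized sign patterns to connected components of the complement of the zero sets, Warren's chamber bound of the form $2(2q)^t\sum_{i\le t}\binom{m}{i}$, and the estimate $\sum_{i\le t}\binom{m}{i}\le (em/t)^t$ valid for $m\ge t$. Your sketch is correct, and your observation that the hypothesis $m\ge t$ (present in the cited source but omitted in the paper's restatement, and without which the stated bound can fail while the trivial $K\le 2^m$ suffices downstream) is needed is a fair and accurate catch.
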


\begin{proof}[Proof of~\autoref{theorem: VC-dimension}]
    By virtue of~\autoref{lemma: reproduce drivatives}, the proof reduces to bounding the VC-dimension of the class $\mathcal{F}^{k-s-1,k-1}(Q+2,(Q+2)^sG,3^sS,\infty,\infty)$. This argument follows directly from the proof of~\cite[Theorem 6]{bartlett2019nearly}. For the sake of readability and to make the paper self-contained, we present the key steps of the proof here. The interested reader is referred to~\cite{bartlett2019nearly} for a complete treatment.

    For notational convenience, let $f(x;\theta)$ denote the output of a network in $\mathcal{F}^{k-s-1,k-1}$ for an input $x\in\mathbb{R}^d$ and a parameter vector $\theta\in\mathbb{R}^{3^sS}$. Thus, each $\theta$ uniquely specifies a function $f(\cdot;\theta)\in\mathcal{F}^{k-s-1,k-1}$. Assume the VC-dimension of this class is $m$. Then there exists a set of points $\{x^1,\dots,x^m\}\subset\mathbb{R}^d$ that is shattered by the network class, meaning:
    \begin{equation*}
        K :=\bigl|\bigl\{\,(\,\mathrm{sgn}(f(x^1;\theta)),\dots,\mathrm{sgn}(f(x^m;\theta))\,)\colon \theta\in\mathbb{R}^{3^sS}\bigr\}\bigr|=2^m.
    \end{equation*}
    The proof hinges on constructing a suitable partition $\{P_1,P_2,\ldots,P_N\}$ of the parameter space $\mathbb{R}^{3^sS}$ such that on each cell $P_i$, the function $f(x^j;\theta)$ becomes a polynomial in the parameters $\theta$. This allows us to apply~\autoref{lemma: partition} to bound the number of sign patterns on each cell, which in turn bounds $K$ and therefore the VC-dimension.

    This partition is constructed recursively, layer by layer. The first layer is linear, so we can take the entire parameter space $\mathbb{R}^{3^sS}$ as the initial partition. For subsequent layers, assuming we have a valid partition for layer $n-1$, we refine it for layer $n$. The zero-sets of the pre-activation inputs at layer $n$ define a set of hyperplanes. These hyperplanes are used to further divide the cells of the existing partition. Within each new, smaller cell, the sign of every pre-activation is fixed, causing the activated output to be a polynomial. The number of new cells generated by this refinement at each layer is bounded using ~\autoref{lemma: partition}.

    This recursive construction yields a final partition that satisfies the desired polynomial property, with an exponential bound on the total number of cells. Specifically, we can obtain the following bounds. The number of sign patterns is bounded by summing over the patterns in each cell:
    \begin{equation*}
        K \leq \sum_{i=1}^N \bigl|\bigl\{\,(\,\mathrm{sgn}(f(x^1;\theta)),\dots,\mathrm{sgn}(f(x^m;\theta))\,)\colon \theta\in P_i\bigr\}\bigr|\leq N \cdot 2\left(\frac{2em(1+(Q+1)k^{Q+1})}{3^sS}\right)^{3^sS},
    \end{equation*}
    and the total number of partition cells, $N$, is bounded by:
    \begin{equation*}
        N \leq \prod_{i=1}^{Q+1}2\left(\frac{2emk_i(1+(i-1)k^{i-1})}{S_i}\right)^{S_i},
    \end{equation*}
    where $k_i$ is the number of units in layer $i$, and $S_i$ is the cumulative number of parameters up to layer $i$.

    Combining these two inequalities and substituting $K=2^m$, then taking the logarithm of both sides, yields the desired bound:
    \begin{equation*}
        m \lesssim s3^sSQ\log S.
    \end{equation*}
    The proof for the pseudo-dimension bound is analogous. The only modification required is to adapt the definition of $K$ to count pseudo-shattered patterns instead of shattered patterns. The subsequent calculations follow the same logic and are omitted here. The proof is complete.
    \end{proof}

We now provide the proof for the lower bound stated in~\autoref{theorem: lower bounds}. This establishes the necessity of the parameter count for any $\mathrm{ReLU}^k$ network class to achieve the desired simultaneous approximation accuracy.

\begin{proof}[Proof of~\autoref{theorem: lower bounds}]
The core idea of the proof, inspired by~\cite{yarotsky2017error}, is to construct a large family of functions that are hard to approximate, thereby forcing any successful network to have high complexity.

First, by standard results on manifold geometry, we can select a set of points $\mathcal{G}=\{x_1,x_2,\ldots,x_{M}\}$ that is $\frac{1}{2N}$-separated and $\frac{1}{N}$-dense with respect to the geodesic distance. The cardinality of this set is bounded by $M \gtrsim N^d / C(\mathcal{M}^d)$. Associated with this set, we can construct a smooth partition of unity $\{\rho_i\}_{i=1}^M$. Using standard mollification techniques (e.g.,~\cite{evans2022partial}), each function $\rho_i$ can be designed to satisfy: $\rho_i(x) = 1$ for $x \in B_{\mathcal{M}^d}(x_i,\frac{1}{2N})$, $\rho_i(x) = 0$ for $x \notin B_{\mathcal{M}^d}(x_i,\frac{1}{N})$, and $0 < \rho_i(x) < 1$ otherwise.

Next, we pull back a canonical Euclidean bump function onto the manifold. Let $\phi \in C_{\mathrm{c}}^{\infty}\bigl(B_{\mathbb{R}^d}(0,\tfrac{1}{2})\bigr)$ be a smooth bump function such that for a specific multi-index $\alpha_0$ with $|\alpha_0|=s$, we have $D^{\alpha_0}\phi(0)=1$. We define a localized version of this function around each point $x_i$ as $\phi_i(x) = \phi(N\cdot\exp^{-1}_{x_i}(x))$. For any binary vector $z = (z_1, \dots, z_M) \in \{0,1\}^M$, we construct the function:
\begin{equation*}
    f_z(x) = \sum_{i=1}^M z_iN^{-k} \rho_i(x) \phi_i(x)
\end{equation*}
By construction, these functions satisfy $D_{\text{chart}}^{\alpha_0}f_z(x_i) = z_i N^{-(k-s)}$, where $D_{\text{chart}}^{\alpha_0}$ denotes differentiation in the local chart coordinates around $x_i$. It is straightforward to verify that $f_z \in\mathcal{CH}^k(\mathcal{M}^d)$ with a norm bounded independently of $z$.

By the theorem's premise, for any such $f_z$, there exists a network $g_z$ from our class that achieves an $s$-th order simultaneous approximation with error $\epsilon$. Let us choose $N$ such that $\epsilon = \frac{1}{3}N^{-(k-s)}$. This implies that the derivatives of the approximating network $g_z$ must satisfy:
\begin{equation*}
    D_{\text{chart}}^{\alpha_0}g_z(x_i) 
    \begin{cases}
        \in [N^{-(k-s)} - \epsilon, N^{-(k-s)} + \epsilon] \implies D_{\text{chart}}^{\alpha_0}g_z(x_i) > \frac{1}{2}N^{-(k-s)}, \quad &\text{if}\, z_i=1 \\
        \in [-\epsilon, \epsilon] \implies D_{\text{chart}}^{\alpha_0}g_z(x_i) < \frac{1}{2}N^{-(k-s)}, \quad &\text{if}\, z_i=0
    \end{cases}
\end{equation*}
This shows that we can distinguish the value of $z_i$ by thresholding the network's derivative at $\frac{1}{2}N^{-(k-s)}$. Therefore, the class of functions $\{D_{\text{chart}}^{\alpha_0}g : g \in \mathcal{F}\}$ can pseudo-shatter the set $\{x_1, \dots, x_M\}$. This gives a lower bound on the pseudo-dimension: $\mathrm{PDim}(D_{\text{chart}}^{\alpha_0}\mathcal{F}) \geq M$. Expressing this in terms of $\epsilon$, we get $\mathrm{PDim}(D_{\text{chart}}^{\alpha_0}\mathcal{F}) \gtrsim \epsilon^{-\frac{d}{k-s}}$.

A crucial step is to relate this chart-based derivative to the ambient derivative of the network. From differential geometry, derivatives in local coordinates (e.g., geodesic normal coordinates) can be expressed as a linear combination of derivatives in the ambient Euclidean space:
\begin{equation*}
    D^{\beta}_{\text{chart}}\,g(x_m)=\sum_{|\alpha|=|\beta|}\!  c_{\beta,\alpha}(x_m)\, D^{\alpha}_{\text{ambient}}\,g(x_m).
\end{equation*}
The coefficients $c_{\beta,\alpha}(x_m)$ depend smoothly on the point $x_m$ via the Jacobian of the exponential map. This linear transformation does not significantly increase the complexity, as it only introduces a fixed number of sign changes. Therefore, the pseudo-dimension of the ambient derivative class is of the same order.

Finally, we connect this complexity lower bound to the number of network parameters. From our upper bound in~\autoref{theorem: VC-dimension}, we know that $\mathrm{PDim}(D^{\alpha_0}\mathcal{F}) \lesssim SL\log S$. Combining this with our newly derived lower bound gives:
\begin{equation*}
    SQ\log S \gtrsim \mathrm{PDim}(D^{\alpha_0}\mathcal{F}) \gtrsim \epsilon^{-\frac{d}{k-s}}.
\end{equation*}
This completes the proof.
\end{proof}
\newpage
\section*{Appendix}
\addcontentsline{toc}{section}{Appendix}
\label{allapp}
In this section,~\autoref{Appendix: Notations} provides precise definitions of the notation used throughout this paper, and~\autoref{Appendix: Auxiliary Lemma and Proofs} presents the auxiliary lemmas and their corresponding proofs.
\appendix
\section{Notations}\label{Appendix: Notations}
\subsection{Function Space}
We introduce the function spaces $ L_p(\Omega) $ and the Sobolev spaces $ W_p^k(\Omega) $. For $ 1 \leq p \leq \infty $, the $ L_p $ norm is defined as
\begin{equation*}
    \|f\|_{L_p} := \begin{cases}
    \left( \int_{\Omega} |f(x)|^p \, dx \right)^{1/p}, & 1 \leq p < \infty, \\
    \operatorname*{ess\,sup}_{x \in \Omega} |f(x)|, & p = \infty.
    \end{cases} 
\end{equation*}
The Sobolev norm is defined as
\begin{equation*}
    \|f\|_{W_p^k} := \begin{cases}
    \left( \sum_{|\alpha| \leq k} \|D^\alpha f\|_{L_p}^p \right)^{1/p},  &1 \leq p < \infty, \\
    \max_{|\alpha| \leq k} \|D^\alpha f\|_{L_\infty}, & p = \infty.
    \end{cases}
\end{equation*}
The $ D^\alpha f $ denotes the $ \alpha $-th weak derivative of $ f $. The function spaces $ L_p(\Omega) $ and $ W_p^k(\Omega) $ consist of functions with finite norms. Additionally, we denote
\begin{equation*}
    |f|_{W_p^k} := 
    \begin{cases}
        \left( \sum_{|\alpha| = k} \|D^\alpha f\|_{L_p}^p \right)^{1/p} ,  &1 \leq p < \infty,\\
         \max_{|\alpha| = k} \|D^\alpha f\|_{L_\infty}, & p = \infty.
    \end{cases}
\end{equation*}
\subsection{Riemannian Manifolds}
Our notation and terminology primarily follow~\cite{lee2003introduction}. In this paper, we consider a compact, connected, and complete $ d $-dimensional Riemannian manifold $ (\mathcal{M}^{d}, g) $ with bounded geometry, where the metric tensor $ g $ is a positive definite second-order covariant tensor field. For any point $ x \in \mathcal{M}^{d} $, $ T_x\mathcal{M}^{d} $ denotes its tangent space, and the tangent bundle $ T\mathcal{M}^{d} := \bigcup_{x \in \mathcal{M}^{d}} T_x\mathcal{M}^{d} $ along with the cotangent bundle $ T^*\mathcal{M}^{d} $ constitute the fundamental geometric structures on the manifold. In a local coordinate system $ (x^i) $, the natural basis for the tangent space is $ \partial_j := \frac{\partial}{\partial x^j} $, and the dual basis for the cotangent space is $ dx^i $. Using the Einstein summation convention, the metric tensor is expressed as $ g = g_{ij} \, dx^i \, dx^j $, where $ g_{ij} = g(\partial_i, \partial_j) $. The inner product of any tangent vectors $ X_x, Y_x \in T_x\mathcal{M}^{d} $ is defined as $ \langle X_x, Y_x \rangle_{g_x} := g_x(X_x, Y_x) $, and the inverse matrix of the metric matrix $ (g_{ij}) $ is denoted by $ (g^{ij}) $. 

The volume element on the manifold is denoted by $ dV_g $, and in local coordinates, it is expressed as $ dV_g = \sqrt{|g|} \, dx^1 \wedge \dots \wedge dx^d $, where $ |g| = \det(g_{ij}) > 0 $. For a smooth function $ f: \mathcal{M}^{d} \rightarrow \mathbb{R} $, its differential satisfies $ df_x(X_x) = X_x(f) $, where $ X_x $ is an arbitrary tangent vector. The gradient of a function $ h $, denoted by $ \mathrm{grad}_g h $, is defined via the isomorphism induced by the Riemannian metric $ T\mathcal{M}^{d} \cong T^*\mathcal{M}^{d} $, and its local expression is $ \mathrm{grad}_g h = g^{ij} \frac{\partial h}{\partial x^j} \, \partial_i $. The Levi-Civita connection $ \nabla $, which is compatible with the metric, satisfies $ \nabla g = 0 $. Specifically, for a vector field $ X = X^j \partial_j $, its covariant derivative is given by $ \nabla_j X^i = \partial_j X^i + \Gamma^i_{kj} X^k $, where the Christoffel symbols are $ \Gamma^i_{kj} = \frac{1}{2} g^{il} \left( \partial_k g_{lj} + \partial_j g_{lk} - \partial_l g_{kj} \right) $. The divergence of a vector field $ X $ is defined as $ \mathbf{div}_g X := \nabla_i X^i = \partial_i X^i + \Gamma^i_{ki} X^k $. 

For a smooth function on a Riemannian manifold $\mathcal{M}^d$, the Laplace-Beltrami operator $\Delta_{\mathcal{M}}$ is defined as
\begin{equation*}
    \Delta_{\mathcal{M}} f := -\mathrm{div}_g(\nabla f).
\end{equation*}
In local coordinates, this operator can be expressed as
\begin{equation*}
     \Delta_{\mathcal{M}} f = -\frac{1}{\sqrt{\det(g)}} \partial_i \left(g^{ij} \sqrt{\det(g)} \partial_j f\right).
\end{equation*}
On a complete manifold, it is a classical result that $\Delta_{\mathcal{M}}$, initially defined on the space of smooth, compactly supported functions, is essentially self-adjoint. It therefore admits a unique self-adjoint extension to an (unbounded) operator on $L^2(M)$, which we also denote by $\Delta_{\mathcal{M}}$. By the spectral theorem, its spectrum satisfies $\sigma(\Delta_{\mathcal{M}}) \subset [0, \infty)$, and there exists a projection-valued measure $P$ on $\sigma(\Delta_{\mathcal{M}})$ such that
\begin{equation*}
    \Delta_{\mathcal{M}} = \int_{\sigma(\Delta_{\mathcal{M}})} \lambda \, dP(\lambda).
\end{equation*}
For any $f, g \in L^2(M)$, this induces a bounded complex measure
\begin{equation*}
    P_{f,g}(E) = \langle P(E)f, g \rangle, \quad \text{where } E \subset [0, \infty) \text{ is a Borel set}.
\end{equation*}
Furthermore, for any Borel-measurable function $\Phi: [0, \infty) \to \mathbb{R}$, the operator $\Phi(\Delta_{\mathcal{M}})$ is defined via the functional calculus, satisfying
\begin{equation*}
    \langle \Phi(\Delta_{\mathcal{M}})f, g \rangle = \int_0^{\infty} \Phi(\lambda) \, dP_{f,g}(\lambda)
\end{equation*}
for all $f \in \mathrm{dom}(\Phi(\Delta_{\mathcal{M}}))$ and $g \in L^2(M)$. The domain of this operator is given by
\begin{equation*}
    \mathrm{dom}(\Phi(\Delta_{\mathcal{M}})) = \left\{ f \in L^2(M) \, \Big| \, \int_0^{\infty} \Phi(\lambda)^2 \, dP_{f,f}(\lambda) < \infty \right\}.
\end{equation*}
Finally, we provide the definition of bounded geometry, a key assumption for our analysis.
\begin{definition}[Bounded geometry]\label{def:bounded_geometry}
    A complete Riemannian manifold $\mathcal{M}^d$ is said to have bounded geometry if it satisfies the following two conditions:
    \begin{enumerate}[(i)]
        \item  The injectivity radius is uniformly bounded below by a positive constant. That is, there exists a constant $\delta > 0$ such that
    $$ \mathrm{inj}(\mathcal{M}^d) \ge \delta. $$
    This ensures that for any point $x \in \mathcal{M}^d$, the exponential map $\exp_x$ is a diffeomorphism from the ball $B(0, \delta) \subset T_x\mathcal{M}^d$ to its image in $\mathcal{M}^d$.
    \item The components of the metric tensor and all their derivatives are uniformly bounded in normal coordinates. Specifically, there exist constants $C > 0$ and $\{C_\alpha\}_{\alpha}$ (independent of the choice of coordinates) such that in any normal coordinate chart, the following hold for the metric components $g_{ij}$:
    $$ \det(g_{ij}) \ge C \quad \text{and} \quad |\partial^\alpha g_{ij}| \le C_\alpha $$
    for all multi-indices $\alpha$.
    \end{enumerate}
\end{definition}
\section{Auxiliary Lemmas and Proofs}\label{Appendix: Auxiliary Lemma and Proofs}
\subsection{Tensor Product Splines}\label{Subsection: Tensor Product Splines}

For the sake of self-containment, we provide the definition of tensor product splines~\cite{schumaker2007spline}. The construction begins with the univariate spline basis, starting with the normalized B-spline of order $k$.

Let the functions $u_1, u_2, \ldots, u_k$ be defined on an interval $ I \subset \mathbb{R} $, and let a set of points $t_1 \le t_2 \le \cdots \le t_k$ be given in this interval. Suppose these $k$ points can be written as
\[
t_1, t_2, \ldots, t_k = \underbrace{\tau_1, \ldots, \tau_1}_{k_1 \text{ times}},\; \ldots,\; \underbrace{\tau_l, \ldots, \tau_l}_{k_l \text{ times}},
\]
where each $\tau_i$ appears with multiplicity $k_i$, satisfying $\sum_{i=1}^{l} k_i = k$. Let $D^i u$ denote the $i$-th derivative of a function $u$. We define the following determinant:
\[
\begin{aligned}
D 
\begin{pmatrix}
t_1, \cdots, t_k \\
u_1, \cdots, u_k
\end{pmatrix} = \det
\begin{bmatrix}
u_1(\tau_1) & u_2(\tau_1) & \cdots & u_k(\tau_1) \\
Du_1(\tau_1) & Du_2(\tau_1) & \cdots & Du_k(\tau_1) \\
\vdots & \vdots & \ddots & \vdots \\
D^{k_1-1}u_1(\tau_1) & D^{k_1-1}u_2(\tau_1) & \cdots & D^{k_1-1}u_k(\tau_1) \\
\vdots & \vdots & \ddots & \vdots \\
u_1(\tau_l) & u_2(\tau_l) & \cdots & u_k(\tau_l) \\
Du_1(\tau_l) & Du_2(\tau_l) & \cdots & Du_k(\tau_l) \\
\vdots & \vdots & \ddots & \vdots \\
D^{k_l-1}u_1(\tau_l) & D^{k_l-1}u_2(\tau_l) & \cdots & D^{k_l-1}u_k(\tau_l)
\end{bmatrix}.
\end{aligned}
\]
Given a positive integer $k>0$, the $k$-th order divided difference of a function $f$ with respect to the points $t_1, \ldots, t_{k+1}$ on the interval $I$ is defined as
\[
[t_1,\cdots,t_{k+1}]f = \frac
{D
\begin{pmatrix}
t_1, t_2, \cdots,t_{k}, t_{k+1} \\
1, x, \cdots, x^{k-1}, f
\end{pmatrix}
}
{D
\begin{pmatrix}
t_1, t_2, \cdots,t_{k}, t_{k+1} \\
1, x, \cdots, x^{k-1}, x^k
\end{pmatrix}}.
\]
Consider a non-decreasing sequence of real numbers, called knots:
\[
\cdots \le t_{-1} \le t_{0} \le t_1 \le \cdots.
\]
For any integer $i$ and positive integer $k>0$, the normalized B-spline of order $k$ corresponding to the knots $t_i, t_{i+1}, \ldots, t_{i+k}$ is defined as
\[
B_i^k(x) = (-1)^k\,(t_{i+k}-t_{i})\,[t_i,\cdots,t_{i+k}](x-t)_+^{k-1},
\]
where $(z)_+ = \max(z,0)$ is the truncated power function. For $N>0$, we choose the following augmented uniform partition:
\[t_1= \cdots = t_{k} = 0 < t_{k+1} < \cdots < t_{N+k-1} < 1= t_{N+k} = \cdots =t_{N+2k-1}.\]
The spline space is then defined as $\mathrm{span}\{B_i^k\}_{i=1}^{N+k-1}$. Following~\cite{schumaker2007spline}, we can construct a dual basis of linear functionals $\{J_i\}_{i=1}^{N+k-1}$ satisfying $J_i (B_j^k) = \delta_{ij}$. This allows us to define the spline quasi-interpolation operator:
\begin{equation*}
    Jf(x) = \sum_{i=1}^{N+k-1} J_i(f) B_i^k(x),\quad  x \in [0,1].
\end{equation*}
Building upon this, we introduce tensor product splines and quasi-interpolation operator on $[0,1]^d$. For each dimension $j=1, \ldots, d$, we select an analogous augmented uniform partition $\{t_{j,i}\}_{i=1}^{N+2k-1}$ and obtain $d$ sets of spline bases $\{ B_{i_j}^{k}(x_j)\}_{i_j=1}^{N+k-1}$. The tensor product spline space is 
\begin{equation*}
    \mathcal{S} = \mathrm{span}\{B_{i_1}^{k}(x_1)\cdots B_{i_d}^{k}(x_d)\}_{i_1,\ldots,i_d=1}^{N+k-1}.
\end{equation*}
The tensor product B-splines are defined as:
\begin{equation}\label{equation: definition of tensor product B-splines}
    B^k_{i_1,\ldots,i_d}(x) = B^k_{i_1}(x_1)\cdots B^k_{i_d}(x_d),\quad x= (x_1,\ldots,x_d)\in [0,1]^d,
\end{equation}
with support $\mathrm{supp}(B^k_{i_1,\ldots,i_d}) = \bigotimes_{j=1}^d \bigl[\,t_{j,i_j},\;t_{j,i_j+k}\bigr]$. The corresponding dual basis for the tensor product spline is given by:
\begin{equation*}
    J_{i_1,\ldots,i_d} = J_{i_1} \otimes \cdots \otimes J_{i_d},
\end{equation*}
which satisfies the property
\begin{equation*}
    J_{i_1,\ldots,i_d}(B^k_{j_1,\ldots,j_d}) = J_{i_1}(B^k_{j_1})\cdots J_{i_d}(B^k_{j_d}) = \delta_{i_1,j_1}\cdots\delta_{i_d,j_d}.
\end{equation*}
The tensor product spline quasi-interpolation operator is then defined as:
\begin{equation}\label{equation: quasi interpolation operator}
    Jf(x) = \sum_{i_1,\ldots,i_d=1}^{N+k-1} J_{i_1,\ldots,i_d}(f)B^k_{i_1,\ldots,i_d}(x).
\end{equation}

This quasi-interpolation operator possesses several favorable properties. For instance,~\cite{schumaker2007spline} shows that the operator is bounded and has a reproducing property on the spline space, as summarized below.

\begin{lemma}[Section 12.2 in \cite{schumaker2007spline}] \label{lemma: local approximation}
Let $B^k_{i_1,\ldots,i_d}$ be the normalized tensor product B-splines defined in~\eqref{equation: definition of tensor product B-splines}, let $\mathcal{S}$ be the spline space they span, and let $J$ be the corresponding quasi-interpolation operator. Then the following properties hold:
\begin{enumerate}[(i)]
    \item Let $T_{\,i_1,\dots,i_d} = \bigotimes_{j=1}^d \bigl[\,t_{j,i_j},\;t_{j,i_j+k}\bigr]$ and $h_{\,i_1,\dots,i_d} = \prod_{j=1}^{d} \Bigl(t_{j,i_j+k} - t_{j,i_j}\Bigr)$. For any $f \in L_p\bigl({T}_{\,i_1,\dots,i_d}\bigr)$, the following inequality holds:
    \begin{equation*}
        \bigl|J_{i_1\cdots i_d} (f)\bigr|\leq(2k + 1)^d\,9^{d(k-1)}\,h_{i_1\cdots i_d}^{-\frac{1}{p}}\;\bigl\| f \bigr\|_{L_p\bigl(T_{\,i_1,\dots,i_d}\bigr)}.
    \end{equation*}
    Consequently, $J:L_p([0,1]^d) \mapsto L_p([0,1]^d)$ is a bounded linear operator. That is, there exists a constant $C>0$, independent of $f$, such that
    \begin{equation*}
        \bigl\|Jf\bigr\|_{L_p([0,1]^d)} \leq C\bigl\|f\bigr\|_{L_p([0,1]^d)}.
    \end{equation*}
    \item $J$ has the reproducing property on $\mathcal{S}$, i.e.,
    \begin{equation*}
        Js = s \quad \forall s \in \mathcal{S}.
    \end{equation*}
\end{enumerate}
\end{lemma}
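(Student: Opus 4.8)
The plan is to reduce the $d$-dimensional claim to the univariate one and then exploit the tensor-product structure of $J$. Recall that the functionals in \eqref{equation: quasi interpolation operator} factor as $J_{i_1,\dots,i_d}=J_{i_1}\otimes\cdots\otimes J_{i_d}$, where each univariate $J_i$ is \emph{local}, i.e.\ $J_i(g)$ depends only on $g|_{[t_i,t_{i+k}]}$, and $J_i(B^k_j)=\delta_{ij}$. Hence $J_{i_1,\dots,i_d}(f)$ depends only on $f|_{T_{i_1,\dots,i_d}}$, and any univariate bound of the form $|J_i(g)|\le c_k\,(t_{i+k}-t_i)^{-1/p}\,\|g\|_{L_p([t_i,t_{i+k}])}$ tensorizes: applying this estimate one coordinate at a time together with Fubini yields $|J_{i_1,\dots,i_d}(f)|\le c_k^{\,d}\,h_{i_1,\dots,i_d}^{-1/p}\,\|f\|_{L_p(T_{i_1,\dots,i_d})}$ with $h_{i_1,\dots,i_d}=\prod_j (t_{j,i_j+k}-t_{j,i_j})$.

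So the first and central step is the univariate functional estimate with the explicit constant $c_k=(2k+1)\,9^{k-1}$. Here I would follow Schumaker's construction of the dual basis: represent $J_i$ as integration against a dual spline $\psi_i\in\mathcal S$ supported on $[t_i,t_{i+k}]$, obtained by inverting the local Gram matrix of the B-splines whose supports meet that of $B^k_i$. Marsden's identity supplies a Legendre-type expansion of the B-splines on each knot interval, and the classical $L_\infty$-condition-number bound for B-spline Gram matrices controls the entries of the inverse by $9^{k-1}$, while a $k$-dependent counting factor (the number of B-splines overlapping $B^k_i$) accounts for the $2k+1$. This produces $|J_i(g)|\le c_k\,(t_{i+k}-t_i)^{-1}\int_{t_i}^{t_{i+k}}|g|$, and one Hölder inequality on $[t_i,t_{i+k}]$ converts it to the stated $L_p$ form with the scaling weight $(t_{i+k}-t_i)^{-1/p}$ (the case $p=\infty$ being immediate). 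Tensorizing as above gives the displayed inequality in part (i).

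The $L_p$-boundedness of $J$ then follows from a finite-overlap argument. Two structural facts suffice: the normalized tensor B-splines satisfy $0\le B^k_{i_1,\dots,i_d}\le 1$ and $\sum_{i_1,\dots,i_d}B^k_{i_1,\dots,i_d}\equiv 1$ (Marsden coordinatewise), so at most $k^d$ of them are nonzero at any given point; and each support cell $T_{\vec i}$ contains at most $k^d$ grid cells and meets at most $(2k-1)^d$ other support cells. On a fixed grid cell $Q$ one has $|Jf(x)|\le\max_{\vec i:\,Q\subset T_{\vec i}}|J_{\vec i}(f)|$ for $x\in Q$; since $|Q|\le h_{\vec i}$ whenever $Q\subset T_{\vec i}$, raising this to the $p$-th power, inserting the part-(i) bound, summing over the $\le k^d$ relevant $\vec i$, and then summing over $Q$ while using that each $T_{\vec i}$ contains $\le k^d$ cells and that the masses $\|f\|_{L_p(T_{\vec i})}^p$ sum to $\le k^d\|f\|_{L_p([0,1]^d)}^p$, yields $\|Jf\|_{L_p([0,1]^d)}\le C(k,d)\,\|f\|_{L_p([0,1]^d)}$.

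Finally, part (ii) is immediate from linearity and the duality relation: for $s=\sum_{\vec j}c_{\vec j}B^k_{\vec j}\in\mathcal S$ one gets $Js=\sum_{\vec i}J_{\vec i}(s)\,B^k_{\vec i}=\sum_{\vec i}\big(\sum_{\vec j}c_{\vec j}\prod_m\delta_{i_m,j_m}\big)B^k_{\vec i}=\sum_{\vec i}c_{\vec i}B^k_{\vec i}=s$. I expect the main obstacle to be exactly the first step: carrying the \emph{sharp} constant $(2k+1)^d\,9^{d(k-1)}$ through the dual-functional construction rather than settling for an unspecified $C(k,d)$, since the downstream error analysis of \autoref{lemma: Sobolev on unit cube} feeds this constant directly into the explicit network-size and parameter bounds.
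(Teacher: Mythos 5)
Your outline is correct and matches the argument behind the result the paper relies on: the paper itself gives no proof of this lemma, it simply cites Schumaker (Theorems 12.5 and 12.6), and those theorems are proved exactly as you describe — tensorize the univariate dual-functional bound $|J_i(g)|\le (2k+1)9^{k-1}(t_{i+k}-t_i)^{-1/p}\|g\|_{L_p([t_i,t_{i+k}])}$ coordinatewise via locality and Fubini, deduce $L_p$-boundedness from the partition-of-unity/finite-overlap argument, and get reproduction from biorthogonality, noting that reproduction of $\mathcal S$ plus the Bramble--Hilbert step elsewhere in the paper only needs $JP=P$ for the relevant polynomials/splines, which your part (ii) covers. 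The one caveat concerns the step you yourself flag as the crux: the specific constant $(2k+1)9^{k-1}$ in Schumaker does not come from inverting a local B-spline Gram matrix (that would give the $L_2$-dual basis with a different, less explicit bound), but from his particular construction of the dual functionals in Section 4.6 (Theorem 4.41), where $J_i$ is realized through derivatives of a smooth transition function combined with Marsden's identity; the $9^{k-1}$ is the resulting bound on that functional's norm and the $2k+1$ the local counting factor. So if you want the stated sharp constant rather than an unspecified $C(k,d)$, you should follow that construction; otherwise your argument goes through verbatim and still suffices for the way the lemma is used downstream, since the constant is ultimately absorbed into $C_1$ in \autoref{lemma: Sobolev on unit cube}.
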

The proof can be found in~\cite[Theorem 12.5 and Theorem 12.6]{schumaker2007spline}.

Finally, since simultaneous approximation involves derivatives, we provide an upper bound for the derivatives of the B-spline basis functions. This bound is highly dependent on the choice of knots.

\begin{lemma}\label{lemma: control derivatives}
Let $B_i^s(x)$ be the normalized B-spline basis of order $s$ as defined above. For any integer $\sigma$ with $1 \leq \sigma < s$, the following inequality holds:
\begin{equation*}
   \max_{1\leq i \leq N+2k-s-1 } \|D^\sigma B_i^s \|_{L_\infty([0,1])} \leq (2N)^\sigma \frac{(s-1)!}{(s-\sigma-1)!}.
\end{equation*}
\end{lemma}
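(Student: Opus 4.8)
The plan is to read the bound off the classical differentiation identity for normalized B-splines, combined with the observation that on the augmented uniform partition every knot gap that can actually occur in a denominator is at least $1/N$. Concretely, I would start from the fact (see~\cite{schumaker2007spline}) that for $m\ge 2$
\begin{equation*}
  \frac{d}{dx}B_i^m(x)=(m-1)\left(\frac{B_i^{m-1}(x)}{t_{i+m-1}-t_i}-\frac{B_{i+1}^{m-1}(x)}{t_{i+m}-t_{i+1}}\right),
\end{equation*}
with the standard convention that a summand is set to zero whenever its denominator vanishes (in that case the order-$(m-1)$ B-spline in the numerator has collapsed knots and is itself identically zero, so the convention is consistent). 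On the partition $t_1=\dots=t_k=0<t_{k+1}<\dots<t_{N+k-1}<1=t_{N+k}=\dots=t_{N+2k-1}$ the distinct knot values are $0,\tfrac1N,\dots,\tfrac{N-1}{N},1$, so any difference $t_{j+r}-t_j$ that is strictly positive is $\ge 1/N$; equivalently every reciprocal knot gap appearing above is $\le N$. I will also use the normalization $0\le B_i^m\le 1$ (normalized B-splines of a fixed order form a partition of unity).

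Given these two facts I would argue by induction on the derivative order $\sigma$. For the base case $\sigma=1$, the displayed identity with $m=s$, together with the two bounds above, gives
\begin{equation*}
  \|DB_i^s\|_{L_\infty([0,1])}\le (s-1)\bigl(N\|B_i^{s-1}\|_\infty+N\|B_{i+1}^{s-1}\|_\infty\bigr)\le 2N(s-1)=(2N)^1\frac{(s-1)!}{(s-2)!},
\end{equation*}
uniformly in $i$, which is exactly the claim for $\sigma=1$.

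For the inductive step, suppose the asserted bound holds for derivative order $\sigma-1$ and every spline order $m\ge\sigma$. Differentiating the recursion $\sigma-1$ further times (differentiation commutes with the linear combination) yields, for $s\ge\sigma+1$,
\begin{equation*}
  D^{\sigma}B_i^s=(s-1)\left(\frac{D^{\sigma-1}B_i^{s-1}}{t_{i+s-1}-t_i}-\frac{D^{\sigma-1}B_{i+1}^{s-1}}{t_{i+s}-t_{i+1}}\right),
\end{equation*}
again with vanishing-denominator terms dropped. Since $\sigma-1\le(s-1)-1$, the induction hypothesis applies with spline order $s-1$ and gives $\|D^{\sigma-1}B_j^{s-1}\|_\infty\le(2N)^{\sigma-1}\frac{(s-2)!}{(s-\sigma-1)!}$ for $j\in\{i,i+1\}$; combining this with the reciprocal-gap bound $\le N$ and the triangle inequality,
\begin{equation*}
  \|D^{\sigma}B_i^s\|_\infty\le (s-1)\cdot 2N\cdot(2N)^{\sigma-1}\frac{(s-2)!}{(s-\sigma-1)!}=(2N)^{\sigma}\frac{(s-1)!}{(s-\sigma-1)!},
\end{equation*}
which closes the induction; taking the maximum over $i$ is immediate since every estimate is uniform in $i$.

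The only genuinely delicate point — and the one to write out carefully — is the bookkeeping near the endpoints, where several knots coincide: there some of the denominators above are zero and the corresponding reduced-order B-splines vanish identically. This is handled cleanly by the convention recalled above, under which such terms simply disappear from every formula, so the recursion and the induction go through verbatim; everything else is routine. (One can even shave the constant from $(2N)^\sigma$ to $N^\sigma$ by exploiting $\sum_j B_j^{m-1}(x)\le 1$ in the first differentiation step, but since the weaker bound suffices for all later uses I would not pursue this.)
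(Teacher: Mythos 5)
Your proof is correct. The paper itself offers no argument for this lemma beyond citing Theorem~4.22 of Schumaker's book, which is a general bound on $D^{\sigma}B_i^s$ in terms of the reciprocal of the minimal knot spacing; your induction via the B-spline differentiation recursion, the $1/N$ lower bound on every positive knot gap of the augmented uniform knot vector, the partition-of-unity bound $0\le B_i^{m}\le 1$, and the vanishing-denominator convention at the coalescent boundary knots is precisely the standard mechanism behind that cited theorem, and it reproduces the same constant $(2N)^{\sigma}\frac{(s-1)!}{(s-\sigma-1)!}$. The only difference is presentational: you rederive the bound self-contained for this particular knot sequence, whereas the paper outsources it to the reference (which covers general knot vectors); both are sound, and your endpoint bookkeeping via the convention that a term with a collapsed denominator is dropped (its numerator B-spline being identically zero) is exactly the right way to handle the multiplicity-$k$ end knots.
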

This proof follows directly from~\cite[Theorem 4.22]{schumaker2007spline}.
\subsection{Stability Estimates}\label{Subsection: Sobolev Composite Differences}
The central goal of this subsection is to establish an upper bound for $\|f(g(x))-f(h(x))\|_{W^n_\infty}$. Our strategy is to first derive the result for smooth functions, i.e., $f,g \in C^{\infty}$, and then extend it to general Sobolev functions via a density argument. To do this, we require the Faà di Bruno formula for higher-order derivatives of composite functions.

We begin by defining the incomplete Bell polynomials $ B_{n,k}(x_1, x_2, \ldots, x_{n-k+1}) $:
\begin{equation}\label{equation: one Bell}
    B_{n,k} = \sum \frac{n!}{m_1!\,m_2!\,\cdots\,m_{n-k+1}!} \left(\frac{x_1}{1!}\right)^{m_1} \left(\frac{x_2}{2!}\right)^{m_2} \cdots \left(\frac{x_{n-k+1}}{(n-k+1)!}\right)^{m_{n-k+1}},
\end{equation}
where the sum is taken over all sequences of non-negative integers $ m_1, m_2, \ldots, m_{n-k+1} $ that satisfy:
\[
\sum_{i=1}^{n-k+1} m_i = k \quad \text{and} \quad \sum_{i=1}^{n-k+1} i \cdot m_i = n.
\]

\begin{definition}[Univariate Faà di Bruno's Formula]
    For univariate functions $x \in \mathbb{R}$, $f:\mathbb{R}\mapsto \mathbb{R}$, and $g:\mathbb{R}\mapsto \mathbb{R}$, assuming both $f$ and $g$ are $n$-times differentiable, the $n$-th derivative of their composition is given by
    \begin{equation*}
        D^n f(g(x)) = \sum_{k=1}^n f^{(k)}(g(x)) \cdot B_{n,k}\left(g'(x), g''(x), \dots, g^{(n-k+1)}(x)\right),
    \end{equation*}
    where $ B_{n,k} $ are the Bell polynomials as defined in~\eqref{equation: one Bell}.
\end{definition}

To state the multivariate Faà di Bruno formula, we first define the corresponding tensor Bell polynomials. Let $x=\{x_\beta\}_{1\le|\beta|\le|\alpha|-k+1}$ be a set of formal variables. The tensor Bell polynomial is defined as:
\begin{equation*}
    B_{\alpha,k}(x)=\sum\frac{\alpha!}{\displaystyle\prod_{1\le|\beta|\le|\alpha|-k+1} m_\beta!}\,\prod_{1\le|\beta|\le|\alpha|-k+1}\left(\frac{x_\beta}{\beta!}\right)^{\,m_\beta}.
\end{equation*}
The sum is taken over all maps $m:\{\beta\in\mathbb{N}_0^D: 1\le|\beta|\le|\alpha|-k+1\}\to\mathbb{N}_0$ satisfying the conditions:
\[
    \sum_{1\le|\beta|\le|\alpha|-k+1} m_\beta = k,\quad \text{and} \quad \sum_{1\le|\beta|\le|\alpha|-k+1} m_\beta\, \beta = \alpha.
\]
Here, the condition $\sum m_\beta\, \beta = \alpha$ means that for each component $ i=1,\dots,D $, we have
\[
\sum_{|\beta|\ge1} m_\beta\, \beta_i = \alpha_i.
\]
Now we can present the multivariate Faà di Bruno formula.
\begin{definition}[Multivariate Faà di Bruno's Formula]
    Let $x \in [0,1]^D$, $g: \mathbb{R}^D \to \mathbb{R}^d$, and $f: \mathbb{R}^d \to \mathbb{R}$. Assuming $f$ and $g$ are smooth ($C^{\infty}$), then for any non-zero multi-index $\alpha$, we have:
\end{definition}
\begin{equation*}
    D^\alpha \Bigl( f\circ g \Bigr)(x)=\sum_{k=1}^{|\alpha|} D^k f\bigl(g(x)\bigr)\Biggl(B_{\alpha,k}\Bigl( \Bigl\{\frac{D^\beta g(x)}{\beta!}\Bigr\}_{1\le|\beta|\le |\alpha|-k+1} \Bigr)
\Biggr).
\end{equation*}
Here, the Bell polynomial term is explicitly
\begin{equation*}
    B_{\alpha,k}\Biggl(\left\{\frac{D^\beta g}{\beta!}\right\}_{1\le|\beta|\le|\alpha|-k+1}\Biggr)=\sum \frac{\alpha!}{\displaystyle\prod_{1\le|\beta|\le|\alpha|-k+1} m_\beta!}\prod_{1\le|\beta|\le|\alpha|-k+1} \left(\frac{D^\beta g}{\beta!}\right)^{m_\beta},
\end{equation*}
with the summation defined as before. For $|\alpha| \leq n$, we apply the multivariate Faà di Bruno formula. Let $E(x) = f(g(x))-f(h(x))$. Then we have
\begin{equation*}
    D^\alpha E =  \sum_{k=1}^{|\alpha|}\Biggl\{ \left[D^k f\circ g-D^k f\circ h\right]\, B_{\alpha,k}\Bigl(\Bigl\{ \frac{D^\beta g}{\beta!}\Bigr\}\Bigr) 
+ D^k f\circ h \,\Delta B_{\alpha,k} \Biggr\},
\end{equation*}
where
\[
\Delta B_{\alpha,k}(x)=B_{\alpha,k}\Bigl(\Bigl\{ \frac{D^\beta g(x)}{\beta!}\Bigr\}\Bigr)-B_{\alpha,k}\Bigl(\Bigl\{ \frac{D^\beta h(x)}{\beta!}\Bigr\}\Bigr).
\]
By the Mean Value Theorem, we can bound the first term in the sum:
\begin{equation*}
\begin{aligned}
        \left\|E_1(x)\right\|_{L_\infty}:&= \left\|\sum_{k=1}^{|\alpha|}\left[D^k f\bigl(g(x)\bigr)-D^k f\bigl(h(x)\bigr)\right]\, B_{\alpha,k}\Bigl(\Bigl\{ \frac{D^\beta g(x)}{\beta!}\Bigr\}\Bigr)\right\|_{L_\infty}\\
        &\leq \sum_{k=1}^{|\alpha|}C(\alpha,k)\left\|f\right\|_{W_\infty^{n+1}}\left\|g-h\right\|_{L_\infty}\|g\|_{W^n_\infty}^k.
\end{aligned}
\end{equation*}
Here, $C(\alpha,k)= \sum_{m_\beta}\frac{\alpha!}{\displaystyle\prod m_\beta!}\prod_{\beta}\left(\frac{1}{\beta!}\right)^{m_\beta}$ represents the sum of coefficients in the Bell polynomial and can be bounded by $C(n,D,k):=n!\left(1+n\binom{n+D-1}{D-1}\right)^k$.

Since the tensor Bell polynomial is a homogeneous polynomial of degree $k$, we can use similar estimation techniques to bound the gradient of the Bell polynomial with respect to its arguments. For arguments bounded by $B$, we have
\begin{equation*}
    \|\nabla B_{\alpha,k}(x)\| \le k\;C(\alpha,k)\,B^{\,k-1}.
\end{equation*}
Applying this to bound $\Delta B_{\alpha,k}(x)$ via the Mean Value Theorem, we obtain the bound for the second term:
\begin{equation*}
\begin{aligned}
        \left\|E_2(x)\right\|_{L_\infty}:&= \left\|\sum_{k=1}^{|\alpha|}D^k f\bigl(h(x)\bigr) \,\Delta B_{\alpha,k}(x)\right\|_{L_\infty}\\
        &\leq \sum_{k=1}^{|\alpha|}kC(\alpha,k)\left\|f\right\|_{W_\infty^{n}}\left\|g-h\right\|_{W^n_\infty}\left(\max\{\left\|g\right\|_{W^n_\infty},\left\|h\right\|_{W^n_\infty}\}\right)^{k-1}.
\end{aligned}
\end{equation*}
Combining these results, we arrive at the final inequality:
\begin{equation}\label{equation: composite difference}
    \begin{aligned}
        &\quad \left\|f(g(x))-f(h(x))\right\|_{W_\infty^n} \leq \sum_{k=1}^{n}C(n,D,k)\left\|f\right\|_{W_\infty^{n+1}}\left\|g-h\right\|_{L_\infty}\|g\|_{W^n_\infty}^k\\
        &+\sum_{k=1}^{n}kC(n,D,k)\left\|f\right\|_{W_\infty^{n}}\left\|g-h\right\|_{W^n_\infty}\left(\max\{\left\|g\right\|_{W^n_\infty},\left\|h\right\|_{W^n_\infty}\}\right)^{k-1}.
    \end{aligned}
\end{equation}
\subsection{Averaged Taylor Polynomials}\label{Subsection: Averaged Taylor Polynomials}
This subsection establishes a polynomial approximation result for functions in $W_p^k([0,1]^d)$. Since weak derivatives are not defined pointwise, we employ the concept of an averaged Taylor polynomial. For a function $f \in W_p^k(U)$ on a bounded open set $U \subset \mathbb{R}^d$, its averaged Taylor polynomial over a ball $B(x_0, r, \|\cdot\|_2)$ strictly contained within $U$ is defined as:
\begin{equation}\label{equation: averaged Taylor polynomial}
    Q^kf(x):=  \int_{B(x_0,r,\|\cdot\|_2)} \lambda(y) P_{y}^{k-1}f(x) dy.
\end{equation}
Here, $P_{y}^{k-1}f$ is the Taylor polynomial of degree $k-1$ for $f$ at point $y$, and $\lambda(y)$ is an arbitrary normalized cutoff function with support on $\overline{B(x_0,r,\|\cdot\|_2)}$.

With the averaged Taylor polynomial, we can use the Bramble-Hilbert Lemma to obtain a local approximation error bound. First, we introduce the necessary definitions.
\begin{definition}
    A set $U$ is said to be star-shaped with respect to a ball $B$ if for all $x \in U$, the closed convex hull of $\{x\}\cup B$ is a subset of $U$, i.e., $\overline{\mathrm{conv}}(\{x\}\cup B) \subset U$.
\end{definition}
\begin{definition}
    Let $\mathrm{diam}(U)$ be the diameter of $U$. Assume $U$ is star-shaped with respect to some ball. Let 
    \begin{equation*}
            \eta_{\max} = \sup\{\eta \ | \  U \text{ is star-shaped with respect to a ball of radius } \eta  \}.
    \end{equation*}
    The chunkiness parameter of $U$ is defined as
    \begin{equation*}
        \zeta = \frac{\mathrm{diam}(U)}{\eta_{\max}}.
    \end{equation*}
\end{definition}
We now state the Bramble-Hilbert Lemma.
\begin{lemma}[Bramble-Hilbert Lemma] \label{lemma: Bramble-Hilbert}
 Let $f \in W_p^k(U)$ be a function on a bounded, open, and star-shaped domain $U$. For any integer $s \in \{0, 1, \dots, k\}$ and for $p \geq 1$, its averaged Taylor polynomial $Q^kf$ of degree $k-1$ satisfies:
   \begin{equation}\label{equation: Bramble-Hilbert lemma}
       |f-Q^kf|_{W_p^s(U)} \leq C(k,d,\zeta) (\mathrm{diam}(U))^{k-s} |f|_{W_p^k(U)},
   \end{equation}
   where the constant $C(k,d,\zeta)$ depends only on $k$, $d$, and the chunkiness parameter $\zeta$.
\end{lemma}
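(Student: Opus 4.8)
The plan is to follow the classical route for Bramble--Hilbert-type estimates (in the spirit of Dupont--Scott and Brenner--Scott): reduce to smooth functions, express $f-Q^kf$ through an explicit Riesz-type integral kernel built from the Taylor remainder and the cutoff $\lambda$, and then estimate that kernel's action by Young's convolution inequality. First I would reduce to $f\in C^\infty(\overline U)$. For $1\le p<\infty$, since $U$ is star-shaped with respect to a ball, dilating $f$ toward the star-center and mollifying yields $f_n\in C^\infty(\overline U)$ with $f_n\to f$ in $W_p^k(U)$; the operator $Q^k$ is bounded on $W_p^k(U)$ because its kernel $\lambda$ is a fixed smooth function, and the seminorm $|\cdot|_{W_p^s(U)}$ is continuous on $W_p^k(U)$ for $s\le k$, so both sides of \eqref{equation: Bramble-Hilbert lemma} pass to the limit. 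The case $p=\infty$ then follows by letting $p\to\infty$ in the bound, which is legitimate because the constant produced below does not depend on $p$ and $|g|_{W_p^s(U)}\to|g|_{W_\infty^s(U)}$ on the bounded set $U$.

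For smooth $f$, I would start from the pointwise Taylor identity
\begin{equation*}
f(x)-P_y^{k-1}f(x)=k\sum_{|\alpha|=k}\frac{(x-y)^\alpha}{\alpha!}\int_0^1(1-t)^{k-1}D^\alpha f\bigl(y+t(x-y)\bigr)\,dt,
\end{equation*}
multiply by $\lambda(y)$, integrate over $y$, and change variables $z=y+t(x-y)$ for each fixed $t$ (Jacobian $(1-t)^{-d}$, and $(x-y)^\alpha=(1-t)^{-k}(x-z)^\alpha$). This rewrites $f-Q^kf$ as
\begin{equation*}
f(x)-Q^kf(x)=\sum_{|\alpha|=k}\int_{C_x}k_\alpha(x,z)\,D^\alpha f(z)\,dz,\qquad C_x:=\overline{\mathrm{conv}}\bigl(\{x\}\cup B(x_0,r)\bigr)\subseteq U,
\end{equation*}
with a kernel satisfying $|k_\alpha(x,z)|\le C(k,d)\,\|\lambda\|_{L_\infty}\,r^{d}\,|x-z|^{k-d}$. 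Choosing the normalized $\lambda$ with $\|\lambda\|_{L_\infty}\lesssim r^{-d}$ and $r$ comparable to $\eta_{\max}$, and using $|x-z|\le\mathrm{diam}(U)=\zeta\,\eta_{\max}$, the constant depends only on $k,d,\zeta$.

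Next I would pass to the $s$-th order seminorm. A direct differentiation under the integral sign in \eqref{equation: averaged Taylor polynomial}, together with the reorganization of Taylor polynomials, gives $D^\beta Q^kf=Q^{k-|\beta|}(D^\beta f)$ for every $|\beta|\le k$, with the convention $Q^0\equiv 0$; the case $|\beta|=s=k$ is then immediate since $|f-Q^kf|_{W_p^k(U)}=|f|_{W_p^k(U)}$. For $|\beta|=s<k$, applying the representation of the previous paragraph to $g=D^\beta f$ with order $k-s$ yields
\begin{equation*}
D^\beta(f-Q^kf)(x)=\sum_{|\gamma|=k-s}\int_U k^{(k-s)}_\gamma(x,z)\,D^{\gamma+\beta}f(z)\,dz,\qquad |k^{(k-s)}_\gamma(x,z)|\le C(k,d,\zeta)\,|x-z|^{(k-s)-d}.
\end{equation*}
Extending $D^{\gamma+\beta}f$ by zero outside $U$ and dominating the integral by $\bigl(w*|D^{\gamma+\beta}f|\bigr)(x)$ for $x\in U$, where $w=|\cdot|^{(k-s)-d}\mathbf 1_{\{|\cdot|\le\mathrm{diam}(U)\}}$ has $\|w\|_{L_1}=\frac{\omega_{d-1}}{k-s}\mathrm{diam}(U)^{k-s}$, Young's inequality gives $\|D^\beta(f-Q^kf)\|_{L_p(U)}\le C(k,d,\zeta)\,\mathrm{diam}(U)^{k-s}\,|f|_{W_p^k(U)}$; summing over $|\beta|=s$ produces \eqref{equation: Bramble-Hilbert lemma}.

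The main obstacle is the second paragraph: carrying out the change of variables cleanly and extracting the uniform Riesz bound $|k_\alpha(x,z)|\le C(k,d,\zeta)|x-z|^{k-d}$, in particular keeping track of how the normalization and support radius of $\lambda$ interact with the star-shapedness (through $\eta_{\max}$ and $\mathrm{diam}(U)$) so that the constant ends up a function of $\zeta$ alone. The differentiation identity $D^\beta Q^kf=Q^{k-|\beta|}(D^\beta f)$ also needs careful combinatorial bookkeeping with Taylor polynomials, but it is routine for smooth $f$; everything after the kernel bound is a standard convolution estimate.
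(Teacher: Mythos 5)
The paper does not prove this lemma at all---it simply cites \cite[Lemma 4.3.8]{brenner2008mathematical}---and your argument is essentially the standard Dupont--Scott/Brenner--Scott proof from that very reference (Taylor remainder, change of variables to a Riesz-type kernel, the commutation $D^\beta Q^k f = Q^{k-|\beta|}(D^\beta f)$, and Young's inequality), so the approach is sound and matches the cited source. One small correction to your sketch: after the change of variables the kernel bound should read $|k_\alpha(x,z)|\le C(k,d)\,\|\lambda\|_{L_\infty}\,\bigl(|x-x_0|+r\bigr)^d\,|x-z|^{k-d}$ rather than having the factor $r^d$; with $\|\lambda\|_{L_\infty}\lesssim r^{-d}$ and $r\simeq\eta_{\max}$ this produces the factor $(1+\mathrm{diam}(U)/r)^d\simeq(1+\zeta)^d$, which is exactly where the chunkiness parameter enters the constant---as written, your bound would yield a $\zeta$-independent constant, which cannot be correct.
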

A proof can be found in~\cite[Lemma 4.3.8]{brenner2008mathematical}.
Now, we apply this result by setting $U = T_{\,i_1,\dots,i_d}$. The diameter is $\mathrm{diam}(U) = \sqrt{\sum_{j=1}^d(t_{j,i_j+k}-t_{j,i_j})^2} \leq \sqrt{d}k/N$. Let $x_c$ be the center of the cell $T_{\,i_1,\dots,i_d}$, and let $l_{\min} = \min_{j=1,\ldots,d} |t_{j,i_j+k}-t_{j,i_j}|$. One can verify that for the cell $T_{\,i_1,\dots,i_d}$, we have $\eta_{\max} = l_{\min}/2$. Given our partition, $\eta_{\max} \in [1/(2N), k/(2N)]$, which implies that the chunkiness parameter $\zeta$ is bounded, i.e., $\zeta \in [2\sqrt{d},2k\sqrt{d}]$. We choose the ball $B=B(x_c, r, \|\cdot\|_2)$ with radius $r=(3/8)l_{\min}$. This choice ensures that the condition $r > (1/4)\eta_{\max}$ (a common requirement, check reference) is satisfied and $B \subset T_{\,i_1,\dots,i_d}$. Applying~\autoref{lemma: Bramble-Hilbert}, we conclude that there exists an averaged Taylor polynomial $Q^kf$ satisfying the following inequality:
\begin{equation*}
   |f-Q^kf|_{W_p^s(T_{\,i_1,\dots,i_d})} \leq C(k,d,\zeta) (\sqrt{d}k)^{k-s}N^{-(k-s)} |f|_{W_p^k(T_{\,i_1,\dots,i_d})} ,\quad  s=0,1,\ldots,k.
\end{equation*}
\section{Laplace--Beltrami Operator on Spheres and the Torus}\label{appendix: LB and hd}
This section collects basic formulae and spectral descriptions of the Laplace--Beltrami operator on the unit sphere and on an embedded torus.  See~\cite{dai2013approximation} for further details on spherical harmonics.
\subsection{Sphere Laplacian}
Let the unit sphere be \(\mathbb S^{d-1}=\{x\in\mathbb R^d:\|x\|=1\}\).  For any smooth \(f:\mathbb S^{d-1}\to\mathbb R\) take a smooth extension \(\tilde f\) to a neighborhood in \(\mathbb R^d\) (for instance the $0$-homogeneous extension \(\tilde f(x)=f(x/\|x\|)\)).  At \(x\in\mathbb S^{d-1}\) define the tangential projection matrix
\[
P_x := I - x x^\top \in\mathbb R^{d\times d},
\]
so that the surface (tangential) gradient can be written as
\[
\nabla_{\mathbb S^{d-1}} f(x)=P_x\,\nabla_{\mathbb R^d}\tilde f(x),\qquad |x|=1.
\]
The Laplace--Beltrami operator on the sphere is the tangential divergence of the tangential gradient:
\[
\Delta_{\mathbb S^{d-1}} f(x)
= \operatorname{div}_{\mathbb R^d}\!\big(P_x\,\nabla_{\mathbb R^d}\tilde f\big)(x),\qquad x\in\mathbb S^{d-1},
\]
and this representation is independent of the chosen extension \(\tilde f\).  Expanding in ambient coordinates yields the angular Laplacian
\[
\Delta_0
=\sum_{i=1}^d\frac{\partial^2}{\partial x_i^2}
-\sum_{i=1}^d\sum_{j=1}^d x_i x_j\frac{\partial^2}{\partial x_i\partial x_j}
-(d-1)\sum_{i=1}^d x_i\frac{\partial}{\partial x_i}.
\]
Introducing angular derivatives
\[
D_{i,j}=x_i\frac{\partial}{\partial x_j}-x_j\frac{\partial}{\partial x_i},\qquad 1\le i<j\le d,
\]
one has the angular representation
\[
\Delta_{\mathbb S^{d-1}} = \sum_{1\le i<j\le d} D_{i,j}^2.
\]
Spherical harmonics are defined spectrally as eigenfunctions of \(\Delta_{\mathbb S^{d-1}}\).  For each integer \(n\ge0\) the eigenvalue is
\[
\lambda_n = n(n+d-2),
\]
and the corresponding eigenspace is given by the restrictions to the sphere of homogeneous harmonic polynomials of degree \(n\); its multiplicity is
\[
m(n)=\binom{n+d-1}{n}-\binom{n+d-3}{n-2}.
\]
In \(L^2(\mathbb S^{d-1})\) choose an orthonormal basis \(\{Y_{n,k}\}_{n\ge0,\;1\le k\le m(n)}\) so that
\[
\Delta_{\mathbb S^{d-1}} Y_{n,k} = -\lambda_n\,Y_{n,k}.
\]
Any \(f\in L^2(\mathbb S^{d-1})\) admits the expansion
\[
f(\theta)=\sum_{n=0}^\infty\sum_{k=1}^{m(n)} f_{n,k}\,Y_{n,k}(\theta),\qquad
f_{n,k}=\int_{\mathbb S^{d-1}} f(\theta)\,\overline{Y_{n,k}(\theta)}\,\mathrm d\omega(\theta),
\]
with Parseval's identity
\[
\|f\|_{L^2(\mathbb S^{d-1})}^2=\sum_{n=0}^\infty\sum_{k=1}^{m(n)} |f_{n,k}|^2.
\]
From the spectral representation one obtains the energy identities
\[
\|\nabla_{\mathbb S^{d-1}} f\|_{L^2}^2=\sum_{n,k}\lambda_n\,|f_{n,k}|^2,\qquad
\|\Delta_{\mathbb S^{d-1}} f\|_{L^2}^2=\sum_{n,k}\lambda_n^2\,|f_{n,k}|^2,
\]
and hence the Sobolev \(H^s(\mathbb S^{d-1})\) norm admits the spectral weight representation
\[
\|f\|_{H^s(\mathbb S^{d-1})}^2 \simeq \sum_{n=0}^\infty (1+\lambda_n)^s\sum_{k=1}^{m(n)}|f_{n,k}|^2.
\]
\subsection{Torus Laplacian}
We consider the embedded torus
\[
\mathbb T^2_{R,r}=X_{R,r}([0,2\pi)^2)\subset\mathbb R^3,\qquad
X_{R,r}(u,v)=\big((R+r\cos v)\cos u,\; (R+r\cos v)\sin u,\; r\sin v\big),
\]
with \(R>r>0\).  The first fundamental form induced by this parametrization is diagonal,
\[
g=\begin{pmatrix}
(R+r\cos v)^2 & 0\\[4pt]
0 & r^2
\end{pmatrix},
\]
so the area element is \(\sqrt{|g|}=r\,(R+r\cos v)\).  The inverse metric components are \(g^{uu}=(R+r\cos v)^{-2}\), \(g^{vv}=r^{-2}\).  Therefore the Laplace--Beltrami operator in the coordinates \((u,v)\) takes the divergence form
\begin{equation}\label{eq:torus-lb-coord}
\Delta_{\mathbb T^2_{R,r}} f
=\frac{1}{\sqrt{|g|}}\Big[\partial_u\big(\sqrt{|g|}\,g^{uu}\,\partial_u f\big)
+\partial_v\big(\sqrt{|g|}\,g^{vv}\,\partial_v f\big)\Big],
\end{equation}
or equivalently
\[
\Delta_{\mathbb T^2_{R,r}} f
=\frac{1}{r(R+r\cos v)}\Big\{\partial_u\!\Big(\frac{r}{R+r\cos v}\partial_u f\Big)
+\partial_v\!\Big(\frac{R+r\cos v}{r}\partial_v f\Big)\Big\}.
\]

For the analytic Fourier-type family used in the experiments one obtains closed-form expressions.  If
\[
g(u,v)=A\cos(m u)\qquad(m\in\mathbb Z_{\ge0}),
\]
then \(g\) is independent of \(v\) and from \eqref{eq:torus-lb-coord} one gets
\begin{equation}\label{eq:lap-on-cosmu}
\Delta_{\mathbb T^2_{R,r}}[A\cos(m u)]
= -\frac{m^2\,A\cos(m u)}{(R+r\cos v)^2},
\end{equation}
which is explicit (the right-hand side depends on \(v\) through the metric factor).  For a \(v\)-mode
\[
h(u,v)=B\sin(n v)\qquad(n\in\mathbb Z_{\ge0}),
\]
a direct computation yields the closed form
\begin{equation}\label{eq:lap-on-sinnv}
\Delta_{\mathbb T^2_{R,r}}[B\sin(n v)]
=\frac{B\,n}{r^2(R+r\cos v)}\Big[-r\sin v\cos(n v)-n(R+r\cos v)\sin(n v)\Big],
\end{equation}
and analogous expressions hold for \(\cos(nv)\) and linear combinations thereof.  These closed-form images make \(\Delta_{\mathbb T^2_{R,r}}g\) available exactly for supervision in the loss.

The formulae above provide the pointwise and spectral tools used in the numerical experiments: for the sphere one uses the ambient-space/angular representations and spherical-harmonic expansion, while for the torus one evaluates \(\Delta_{\mathbb T^2_{R,r}}\) via \eqref{eq:torus-lb-coord} and uses \eqref{eq:lap-on-cosmu}--\eqref{eq:lap-on-sinnv} for the analytic Fourier targets.

\bibliographystyle{plain}
\bibliography{main.bib}
\end{document}